\newtheorem{theorem}{Theorem}
\DeclarePairedDelimiter{\abs}{\lvert}{\rvert}
\newcolumntype{C}[1]{>{\centering\arraybackslash}p{#1}}
\newcommand{\bs}{\boldsymbol} 
\newtheorem{proposition}{Proposition}
\newtheorem{assumption}{Assumption}
\newtheorem{lemma}{Lemma}
\newtheorem*{example}{Example}
\begin{document}

\preprint{AIP/123-QED}

\title[]{The multivariate ARMA/CARMA transformation relation}
\thanks{The PhD grant of M.\ D.\ Eggen is funded by NORSAR.}

\author{Mari Dahl Eggen}
 \email{marideg@math.uio.no.}
\affiliation{ 
Department of Mathematics, University of Oslo, P.O.\ Box 1053 Blindern, 0316 Oslo, Norway.
}

\date{\today}

\begin{abstract}
A transformation relation between multivariate ARMA and CARMA processes is derived through a discretization procedure. This gives a direct relationship between the discrete time and continuous time analogues, serving as the basis for an estimation method for multivariate CARMA models. We will see that the autoregressive coefficients, making up the deterministic part of a multivariate CARMA model, are entirely given by the transformation relation. An Euler discretization convergence rate of jump diffusions is found for the case of small jumps of infinite variation. This substantiates applying the transformation relation for estimation of multivariate CARMA models driven by NIG-L{\'e}vy processes. A two-dimensional CAR model is fit to stratospheric temperature and wind data, as an example of how to apply the transformation relation in estimation methods. 
\end{abstract}

\keywords{Discretization, model estimation, MCARMA, transformation relation, VARMA}
\maketitle


\section{\label{sect:introduction}Introduction}


The widely known family of autoregressive moving average (ARMA) processes holds properties suitable for discrete time series modelling (see, e.g., \cite{brockwell_davis_book91}). Another family of processes that has become important in representing time series is the family of continuous time ARMA (CARMA) processes, the continuous time analogue to ARMA processes. Continuous time models allow for irregularly spaced time series and provide the opportunity to derive explicit formulas describing events and properties relying on dynamical systems. Univariate CARMA processes and generalizations are used to model, for example, variables in finance and energy markets, weather variables and turbulence (see, e.g., \cite{todorov_etal06}, \cite{garcia_etal11}, \cite{brockwell_etal13_wind}, \cite{barndorff-nielsen07}). For an extensive overview of developments and applications of CARMA processes, see \cite{brockwell2014recent} and references therein. \\

Most time series arising from dynamical systems in fields like natural sciences, finance and economics will be more accurately represented by multidimensional models describing dependencies between two or more variables within a system. Discrete time multivariate ARMA processes (also called vector ARMA (VARMA) processes), see \cite{brockwell_davis_book91}, are used as models for this purpose, see, e.g. \cite{gomez19} and \cite{wei_19} for practical examples. As argued above, modelling continuous time series is often useful, with no exception in the multidimensional case. A particularly useful property of a multivariate continuous time version of VARMA models in applications, would be the possibility of deriving explicit formulas for the crosscorrelation matrix between modelled time series. The first derivation and proper representation of multivariate CARMA (MCARMA) processes is derived in \cite{marquardt07}. They are shown to be the obvious continuous analogue of VARMA processes. \\

As a natural extension of the work in \cite{marquardt07}, where proper analogy between VARMA and MCARMA processes is concluded, \cite{schlemm12} derive fundamental results for developing estimation theory for non-Gaussian MCARMA processes based on (discrete) equidistant observations. As stated in \cite{brockwell13}, estimating a CARMA model consists of three tasks: 1) choosing suitable integer values $p$ and $q$, respectively describing the autoregressive and moving average orders; 2) estimating autoregressive and moving average coefficients; 3) suggesting an appropriate stochastic process to drive the model. These tasks hold for MCARMA models as well. Statistical tests for choosing $p$ and $q$ already exist for VARMA processes when normally distributed error terms are assumed, see e.g. \cite{gomez19}, however, a best consensus method does not seem to exist. Results in \cite{schlemm12} are utilized in \cite{stelzer12} to derive an estimation procedure for autoregressive and moving average model coefficients in the multivariate case. Some further efforts have been made to enrich model coefficient estimation theory for MCARMA processes, see, e.g., \cite{mayer21} and \cite{fasenhartmann21}. Finally, as stated in \cite{brockwell13}, there are several well-established methods to estimate an appropriate driving L{\'e}vy process. The work in \cite{brockwell13} focuses on a parametric estimation method of discretely observed MCARMA processes. \\

Let $\bs{Y}(t)$ represent the unique causal stationary solution, see \cite{schlemm12}, of an MCARMA process, and define its discrete time $h$-sampled analogue as $\bs{Y}^h(t)\triangleq \{\bs{Y}(nh)\}_{n\in\mathbb{Z}}$. As shown in \cite{fasenhartmann21}, discretely sampled MCARMA processes admits a VARMA representation with a weak white noise. In \cite{brockwell19}, determining the coefficients of $\bs{Y}(t)$ from those of $\bs{Y}^h(t)$ and vice versa is referred to as the sampling problem and the embedding problem respectively. In particular, \cite{fasenhartmann21}, \cite{chambers12} and \cite{thornton17} focus on the sampling problem, where discrete time representations of sampled MCARMA processes are estimated. To the best of our knowledge, the embedding problem for MCARMA processes is not yet considered from the view of discretization methods. However, discretization of CAR processes leading to a transformation relation between AR and CAR processes is derived in \cite{benth2008}. Work leading up to such discretization transformations is found in, e.g., \cite{pham91} and \cite{soderstrom97}. \\

The aim of this study is to derive a transformation relation between VARMA and MCARMA processes through discretization of the MCARMA state space representation. Define a L{\'e}vy-driven MCRAMA process as the solution, $\bs{Y}(t)$, of the state space model 
\begin{align*}
    d\bs{X}(t) = A\bs{X}(t)dt + \beta d\bs{L}(t),\quad \bs{Y}(t) = C\bs{X}(t), \quad t\in\mathbb{R},
\end{align*}
see Section\,\ref{subsec:varma_mcarma_definition} for a formal definition. The multidimensional L{\'e}vy-driven stochastic differential equation (SDE) of Ornstein--Uhlenbeck type, representing a system of one-dimensional SDEs, is discretized using the Euler scheme. The discretized system of SDEs is further solved in a recursive manner to estimate the state vector process $\bs{X}(t)$. The solution admits a VARMA representation, giving a direct transformation relation between VARMA and MCARMA processes.\\

Inspired by \cite{asmussen01} and \cite{benth11}, convergence rates of (multidimensional) jump diffusions with jumps of finite variance and infinite variations are studied. The final result gives a convergence rate of these processes' Euler scheme, see, e.g., \cite{platen10}. In particular, it is shown that the rate of convergence is controllable by adjusting the discretization step size, as usual, and by the size of the approximated small jumps. The state space representation of the L{\'e}vy-driven MCARMA process is a special case of these jumps diffusions, meaning that the convergence results hold for such models when driven by L{\'e}vy processes of finite variance and infinite variation. \\

VARMA models have been used extensively in modelling of economic and financial variables, see, e.g., \cite{james85} and \cite{gomez19}. In modelling and prediction of climate and weather variables, VARMA models have been utilized to some degree, e.g., \cite{parlange00}, \cite{love08} and \cite{bs21}. The continuous time counterpart would be useful in applications as well, see \cite{eggen2021}. The VAR model is a simplified version of the VARMA model, with the MCAR process as its continuous counterpart. The multivariate ARMA/CARMA transformation relation is considerably simplified in the VAR/MCAR case, and is easy to use in model estimation of MCAR processes. To demonstrate how the transformation relation is used as a tool in model estimation, a case study is performed where a two-dimensional MCAR model is fit to weather data. \\

The atmospheric layer closest to the surface of the Earth is called the troposphere. Above the troposphere lays the stratosphere, reaching up to about $50$\;km above the surface of the Earth. These two atmospheric layers are said to interact through stratosphere-troposphere coupling, as weather conditions in the stratosphere affect weather conditions in the troposphere. As explained in, e.g., \cite{karpechko16} and \cite{scaife22}, probing and proper representation of the stratosphere, combined with a thorough understanding of stratosphere-troposphere coupling, has the potential to enhance long-term numerical surface weather prediction. In this regard, stochastic modelling of stratospheric weather dynamics will enlarge the ensemble of existing methods. Inspired by prior work, e.g., \cite{benth2008} and \cite{eggen2021}, an MCAR process is successfully fit to stratospheric temperature and wind data, giving a representation of the two-dimensional dynamical system of dependent variables. \\

The structure of the paper is as follows. Section\,\ref{sect:varma_intro} defines the VARMA and MCARMA representations, and introduces useful
notation. In Section\,\ref{sect:transformation_relation}, the multivariate ARMA/CARMA transformation relation is found through discretization of the MCARMA state space representation. Further, convergence rates substantiating the transformation relation are derived. A case study demonstrating how to use the transformation relation in MCARMA model estimation is performed in Section\,\ref{sect:the_mcar_model}. \\


\begin{flushleft}
\textbf{Notation}\\
\end{flushleft}
Assume that a complete filtered probability space $(\Omega,\mathcal{F},\{\mathcal{F}_t\}_{t\geq 0},P)$ is given as usual, and that all stochastic (vector) processes $\bs{X}(t) \triangleq \{\bs{X}(t)\}_{t\geq 0}$ are defined on that space. \\

Block-matrices are denoted as $M\in \R^{m\times n}$, having associated matrices $M_j$ with matrix elements $\mu_i^{(j)}$. Vectors are denoted as $\bs{V}\in\R^{m}$, with associated sub-vectors $\bs{V}_j$ and elements $V_i$. The $d$-dimensional identity and zero matrices are given by $\mathbb{1}_d,\mathbb{0}_d\in\R^{d\times d}$. Denote by $I_B(\cdot)$ the indicator function of some set $B$, and let $\det(\cdot)$ denote the determinant of matrices. Further, we work under the norm $\|\cdot\|_2 = (E[\abs{\cdot}^2])^{1/2}$ throughout. \\

For convenience, let us recall the general representation of L{\'e}vy processes through It{\^o}-L{\'e}vy decomposition. Let $\bs{L}(t)$ be a L{\'e}vy process with values in $\R^{m}$. Then, for each $t\geq 0$, there exist constant-valued functions $\bs{\alpha}\in\R^{m}$ and $\sigma\in \R^{m\times n}$, referred to as drift and diffusion respectively, such that 
\begin{align}
    \label{eq:ito-levy_decomposition}
    \bs{L}(t) = \bs{\alpha} t + \sigma W(t) + \int_{\abs{\bs{z}}<1}\bs{z}\tilde{N}(t,d\bs{z}) + \int_{\abs{\bs{z}}\geq 1}\bs{z}N(t,d\bs{z}),
\end{align}
where $\tilde{N}(dt,d\bs{z})=N(dt,d\bs{z})-\nu(d\bs{z})dt$, $\bs{z}\in \R^{m}$, $\nu(U)$ is a Borel measure on $\R^{m}\backslash \{0\}$, $\bs{W}(t)$ is a standard Brownian motion process in $\R^{n}$ and $N(t,U)$ is a Poisson random measure on $\R^{+}\times (\R^{m}\backslash \{0\})$. Further, $\bs{W}(t)$ and $N(t,U)$ are independent, and $\nu(U)$ is a L{\'e}vy measure, meaning $\int_{\R^{m}\backslash \{0\}}(\abs{\bs{z}}^2\wedge 1)\nu(d\bs{z})< \infty$ is satisfied. We will further assume that $\int_{\abs{\bs{z}}\geq \epsilon}\abs{\bs{z}}^2\nu(d\bs{z})< \infty$ for some $0<\epsilon\leq 1$, such that the L{\'e}vy process has finite second moments. See, e.g., \cite{applebaum04} for a thorough introduction of L{\'e}vy processes. \\


\section{\label{sect:varma_intro}Multivariate ARMA and CARMA models}


This section introduces the modelling framework of discrete time VARMA and continuous time MCARMA processes. With the intention of deriving a transformation relation between VARMA and MCARMA processes in Section\,\ref{sect:transformation_relation}, the the MCARMA model structure is studied more explicitly, and a recursive parameter connected to the notation of the defined MCARMA framework is defined. \\ 


\subsection{Model representations \label{subsec:varma_mcarma_definition}}


A short introduction to VARMA and MCARMA processes is presented in the following. The notation and definitions are inspired by, \cite{gomez19}, \cite{marquardt07} and \cite{schlemm12}. \\


Define the autoregressive and moving average matrix polynomials as
\begin{align}
    \label{eq:varma_matrix_polynomials}
    \phi(z) = \left( I - \phi_1z - \phi_2z^2 - \cdots - \phi_pz^p \right)\quad\text{and}\quad
    \theta(z) = \left( I + \theta_1z + \theta_2z^2 + \cdots + \theta_qz^q \right), 
\end{align}
respectively, and a backshift operator S as $S\bs{x}(t) = \bs{x}(t-1)$. Then, the VARMA process might be stated as 
\begin{align}
    \label{eq:def_of_varma_mode}
    \phi(S)\bs{x}(t) = \theta(S)\bs{\mathcal{E}}(t),
\end{align}
where $\bs{x}(t)\in\R^d$ is a sequence of random vectors and $\bs{\mathcal{E}}(t)\in\R^m$ is a sequence of serially uncorrelated i.i.d. random vectors with zero mean and common covariance matrix $\Sigma$. Stationarity and invertibility of the model is secured as long as all roots of $\det(\phi(z))$ and $\det(\theta(z))$ are outside the unit circle, respectively, see \cite{Levendis18} and \cite{gomez16}. As seen in Eq.\,\eqref{eq:varma_matrix_polynomials} and \eqref{eq:def_of_varma_mode}, the VARMA model, sometimes denoted as VARMA($p,q$), is determined by an autoregressive parameter, $p\in\N$, a moving average parameter, $q\in\N$, and the dimensionalities $d,m\in\N$. \\

In \cite{marquardt07}, the need for MCARMA processes were addressed, and further derived, such that the joint behaviour of $d$ different time series can be modelled continuously. As far as we know, this is the first mentioning of MCARMA processes in literature. We adapt the notation from \cite{marquardt07} and \cite{schlemm12}, and define the MCARMA process as follows. \\

Let $\bs{L}(t)$ be a L{\'e}vy process with values in $\R^m$, as defined in Eq.\,\eqref{eq:ito-levy_decomposition}. Then the $d$-dimensional L{\'e}vy-driven MCARMA process, $\bs{Y}(t)$, is given by the state space representation 
\begin{align}
    \label{eq:the_mcarma_model}
    d\bs{X}(t) = A\bs{X}(t)dt + \beta d\bs{L}(t),\quad \bs{Y}(t) = C\bs{X}(t).
\end{align}
Here, $\bs{X}(t)$ is required to be a unique stationary solution of the given Ornstein--Uhlenbeck type state space model, with $A$, $\beta$ and $C$ given as
\begin{align*}
     A = \begin{bmatrix}
       \mathbb{0}_d & \mathbb{1}_d & \mathbb{0}_d & \cdots & \mathbb{0}_d       \\
       \mathbb{0}_d & \mathbb{0}_d & \mathbb{1}_d &\cdots & \mathbb{0}_d        \\
       \vdots & \vdots & \vdots &\vdots &\vdots \\
       \mathbb{0}_d & \mathbb{0}_d & \mathbb{0}_d & \cdots & \mathbb{1}_d \\
       -A_p & -A_{p-1} & -A_{p-2} & \cdots & -A_1 \\
     \end{bmatrix}\in \R^{pd\times pd}, \quad \beta = \left(\beta_1^T\cdots \beta_p^T\right)^T\in \R^{pd\times m},
\end{align*}
\begin{align*}
    \beta_{p-\kappa} = -I_{\{0,\ldots ,q\}}(\kappa)\left[\sum_{i=1}^{p-\kappa-1} A_i\beta_{p-\kappa-i} - B_{q-\kappa}\right] \quad\text{and}\quad 
        C = \left(\mathbb{1}_d,\mathbb{0}_d,\ldots , \mathbb{0}_d\right)\in \R^{d\times pd},
\end{align*}
where $\mathbb{1}_d,\mathbb{0}_d\in\R^{d\times d}$ represents the $d$-dimensional identity and zero matrices respectively, and $I_{\{0,\ldots ,q\}}(\cdot)$ is the indicator function of the set $\{0,\ldots ,q\}$. Further, we have that $A_j\in \R^{d\times d}$, and $B_{q-\kappa}\in \R^{d\times m}$ for $j\in\{1,\ldots ,p\}$, $\kappa\in\{0,\ldots ,q\}$ and positive integers $p>q$. Notice also that $\beta_{p-\kappa}\in \R^{d\times m}$ for $0\leq \kappa \leq p-1$, which is a zero matrix if $\kappa\notin \{0,\ldots ,q\}$. As long as the driving process, $\bs{L}(t)$, admits finite variance, the MCARMA process is assured to have a unique causal stationary solution when the eigenvalues of $A$ have strictly negative real parts.  \\


\subsection{The MCARMA model structure\label{subsect:the_model_structure}}


The aim of this work is to derive a transformation relation between discrete time VARMA processes and continuous time MCARMA processes through discretization of the MCARMA representation in Eq.\,\eqref{eq:the_mcarma_model}. To prepare for this the state space model, $\bs{X}(t)$, is stated and inspected on a more explicit level. That is, the block-matrix $A$ is written explicitly with conveniently defined notation, and each matrix $A_j$ is structured into collections. The defined notation is used to construct a recursive parameter in Section\,\ref{subsect:the_recursive_parameter}, which is further utilized to solve the discretized system of SDEs making up the MCARMA model. The mentioned collection-structure is made to ease readability and understanding throughout this process. \\

As indicated in Section\,\ref{subsec:varma_mcarma_definition}, the state vector process $\bs{X}(t)$ and L{\'e}vy vector process $\bs{L}(t)$ takes values in $\R^{pd}$ and $\R^{m}$ respectively. Denote each element of these vector processes as $X_{i_X}(t)$, $i_X\in\{1,\ldots ,pd\}$, and $L_{i_L}(t)$, $i_L\in\{1,\ldots ,m\}$, and further denote each $d$-dimensional sub-vector of $\bs{X}(t)$ as $\bs{X}_l(t)\in\R^d$, $l\in\{1,\ldots ,p\}$. Then the state space model in Eq.\,\eqref{eq:the_mcarma_model} might be considered as a system of $p$ equations blocks
\begin{numcases}{}
    \label{eq:sde_recursive_equations}d\bs{X}_{l}(t) = \mathbb{1}_d\bs{X}_{l+1}(t)dt + \beta_{p-\kappa}d\bs{L}(t), \\
    \label{eq:sde_lag_dependence_structure} d\bs{X}_{p}(t) = [-A_p\cdots -A_1]\bs{X}(t)dt + \beta_{p}d\bs{L}(t),
\end{numcases}
with $(l,\kappa)\in\{(1,p-1),(2,p-2),\ldots ,(p-1,1)\}$. Note that the equation block in Eq.\,\eqref{eq:sde_lag_dependence_structure} corresponds to $l=p$ and $\kappa=0$. The index-dependence between the block number index, $l$, and the moving average index, $\kappa$, is summarised in Table\,\ref{tab:index_collection_dependence}, together with the corresponding index number of each one-dimensional SDE in the system. \\

Further, group the $p$ equation blocks into three disjoint collections. That is, define the solution-collection $\mathcal{C}^{S}$ as holding SDEs in block $1$ ($l=1$), the recursive-collection $\mathcal{C}^{R}$ as holding SDEs in blocks $2$ to $p-1$ ($l=2,\ldots ,p-1$) and the AR-collection $\mathcal{C}^{AR}$ as holding SDEs in block $p$. The collection-structure is presented in Table\,\ref{tab:index_collection_dependence}. A $d$-dimensional dynamical system known to follow an MCARMA process, $\bs{Y}(t)$, is given by the vector process $\bs{X}_1(t)$ (see the model setup in Eq.\,\eqref{eq:the_mcarma_model} to understand why). This is why $\mathcal{C}^{S}$ is called the solution-collection. The time lag dependence structure of the dynamical system is entirely described through the SDEs in Eq.\,\eqref{eq:sde_lag_dependence_structure}, giving the name to the AR-collection, $\mathcal{C}^{AR}$. Finally, the system of one-dimensional SDEs making up the MCARMA model has to be solved recursively to obtain $\bs{X}_1(t)$, with the SDEs in $\mathcal{C}^{AR}$ as a starting point. The SDEs in the recursive-collection, $\mathcal{C}^{R}$, are used for this purpose. Notice that an increasing number of lags, $p$, in the MCARMA model requires a larger collection $\mathcal{C}^{R}$ to make the system of equations solvable. \\

\begin{table}[bt]
\centering
\caption{Dependence structure between collections $\mathcal{C}^{S}$, $\mathcal{C}^{R}$ and $\mathcal{C}^{AR}$, block number index, $l$, moving average index, $\kappa$, and SDE index numbers. \label{tab:index_collection_dependence}}
\begin{tabular}{l c c l}
\toprule \\ 
Collection \hspace{1mm} & \hspace{1mm} Block number index, $l$ \hspace{1mm} & \hspace{1mm} Moving average index, $\kappa$ \hspace{1mm} & \hspace{1mm} SDE index number \\
\midrule
$\mathcal{C}^{S}$ & 1  & p-1 & $\{1,\ldots ,d\}$  \\
\multirow{4}{1cm}{$\mathcal{C}^{R}$} &  2 & p-2 & \multirow{4}{3cm}{$\{d+1,\ldots ,pd-d\}$}\\
 &  \vdots & \vdots & \\
 & p-2  & 2 & \\
 &  p-1 & 1 &  \\
$\mathcal{C}^{AR}$ &  p & 0 & $\{pd-d+1,\ldots ,pd\}$ \\
\bottomrule  
\end{tabular}
\end{table}

A general solution, $\bs{X}_1(t)$, of the state space model representing the MCARMA process will be derived through the Euler scheme of $d\bs{X}(t)$. Note that the derived solution will be an approximation with accuracy depending on the incremental value defining the discretization scheme, see Section\,\ref{sect:transformation_relation}. To prepare for element-wise discretization, define each matrix $A_j$, \,$j\in\{1,\ldots ,p\}$, and $\beta_{p-\kappa}$, \,$\kappa\in\{0,\ldots ,p-1\}$, in the respective block-matrices $A$ and $\beta$, as
\begin{align}
    \label{eq:matrix_element_definition}
     A_j = \begin{bmatrix}
       \alpha_1^{(
       j)} & \alpha_2^{(j)} & \cdots & \alpha_d^{(j)}       \\
       \alpha_{d+1}^{(j)} &  \cdots &\cdots & \alpha_{2d}^{(j)}        \\
        \vdots & \vdots &\vdots &\vdots \\
        \alpha_{(d-1)d+1}^{(j)} & \cdots & \cdots & \alpha_{dd}^{(j)} 
     \end{bmatrix}\quad \text{and} \quad \beta_{p-\kappa} = \begin{bmatrix}
       \beta_1^{(\kappa)} & \beta_2^{(\kappa)} & \cdots & \beta_m^{(\kappa)}       \\
       \beta_{m+1}^{(\kappa)} &  \cdots &\cdots & \beta_{2m}^{(\kappa)}        \\
        \vdots & \vdots &\vdots &\vdots \\
        \beta_{(d-1)m+1}^{(\kappa)} & \cdots & \cdots & \beta_{dm}^{(\kappa)} 
     \end{bmatrix}.
\end{align}
This gives elements 
\begin{align}
    \label{eq:elementwise_p_and_q_terms}
        -\sum_{l=1}^{p}\sum_{s=1}^{d}\alpha_{(k-1)d+s}^{(p-l+1)}X_{(l-1)d+s}(t)dt \quad\text{and}\quad \sum_{r=1}^{m}\beta_{(k-1)m+r}^{(\kappa)}dL_r(t),
\end{align}
of the respective matrix-vector products $[-A_p\cdots -A_1]\bs{X}(t)dt$ and $\beta_{p-\kappa} d\bs{L}(t)$, for all $\kappa$. Here, $\kappa$ depends on the equation block, $l$, under consideration and $k\in\{1,\ldots ,d\}$ is set based on the dimension of interest. For example, to state the SDE of dimension $2$ in equation block $p$, set $\kappa=0$ (confer with Table\,\ref{tab:index_collection_dependence}) and $k=2$. Also, remember that the element-wise subscripts of $\bs{X}(t)$ and $\bs{L}(t)$ are defined as $i_X\in\{1,\ldots ,pd\}$ and $i_L\in\{1,\ldots ,m\}$ respectively, corresponding to the respective elements in Eq.\eqref{eq:elementwise_p_and_q_terms}. \\


\subsection{The recursive parameter\label{subsect:the_recursive_parameter}}


The solution, $\bs{Y}(t)$, of the $d$-dimensional MCARMA($p,q$) process is given by the state space model in Eq.\,\eqref{eq:the_mcarma_model}. In this section, a recursive parameter is defined to solve a discretized version of the state space model recursively. We will see that the recursive parameter can be used to express all the one-dimensional SDEs in Eq.\,\eqref{eq:sde_recursive_equations}. \\

It is important to realize that the $d$-dimensions of an MCARMA model each represents a dynamical variable, each dependent on $p$ previous time steps, also across dimensions, with a moving average of degree $q$. The $p$ times lagged $d$ dimensions are represented through $d$ SDEs in each of the $p$ equation blocks introduced in Section\,\ref{subsect:the_model_structure} (see Eq.\,\eqref{eq:sde_recursive_equations} and \eqref{eq:sde_lag_dependence_structure}). That is, the collections $\mathcal{C}^{S}$, $\mathcal{C}^{R}$ and $\mathcal{C}^{AR}$ holds $d$, $(p-2)d$ and $d$ SDEs, respectively (see Table\,\ref{tab:index_collection_dependence}). Remember that the $(p-2)d$ SDEs in $\mathcal{C}^{R}$ are distributed into $p-2$ equation blocks (that is block $2$ to block $p-1$), where block $p-1$ represents the first recursive step, block $p-2$ the second recursive step, and so on. To keep track of all the one-dimensional SDEs in a recursive procedure from block $p$ to block $1$, a recursive parameter is introduced as
\begin{align}
    \label{eq:the_recursive_parameter}
    Q_i^{(l)} \triangleq (Q_i^{(l)}\mid k) = (l-i)d + k,
\end{align}
where $k\in\{1,\ldots , d\}$ and $l\in\{1,\ldots ,p\}$, such that $1\leq i\leq l$ for every $l$, and $i>1$ when $l=p$. Note that $d$ is the fixed dimension parameter of the MCARMA process, $i$ and $l$ are dynamical indexes holding track of equation blocks $1$ to $p-1$ during the recursive procedure, and that $k$ is set depending on the dimension of interest. \\

To understand how the recursive parameter is utilized, consider the following. Let the recursive parameter represent element index, $i_X$, of $\bs{X}(t)$, meaning that elements of $\bs{X}(t)$ are given as $X_{Q_i^{(l)}}(t)$. By the definition in Eq.\,\eqref{eq:the_recursive_parameter}, the recursive parameter takes values in $\{1,\ldots ,pd-d\}$, corresponding to SDEs in $\mathcal{C}^S$ and $\mathcal{C}^R$ (see Table\,\ref{tab:index_collection_dependence}). Further, the recursive parameter holds two properties that are utilized in the recursive procedure:
\begin{enumerate}
    \item[Property 1.] SDEs in an equation block with lower block number index, $l$, might be expressed in more ways using $X_{Q_i^{(l)}}(t)$ than SDEs in an equation block with higher block number index. This property is useful because the recursive procedure iterates through SDEs in an equation block with lower block number index more times. That is,
    in the first recursive step SDEs from block $p$ are substituted into SDEs in block $p-1$, in the second recursive step these SDEs are further substituted into SDEs in block $p-2$. This procedure continues until the last recursive step where the result is substituted into SDEs in block $1$. Substituted SDEs from block $p-1$ into SDEs in block $p-2$ (in the second recursive step), which now also contain substituted SDEs from block $p$, will be substituted into one less block. A similar argument holds for all further recursive steps starting in blocks $p-2$ to $2$. See an illustrative example below of this recursive parameter property for a case with $p=3$, where SDEs in equation block $2$ can be expressed using $X_{Q_i^{(l)}}(t)$ in two different ways, and SDEs in equation block $1$ can be expressed using $X_{Q_i^{(l)}}(t)$ in three different ways. 
    \item[Property 2.] By definition (Eq.\,\eqref{eq:the_recursive_parameter}), we have that $Q_1^{(l)}=Q_1^{(l)}-d+d = Q_2^{(l)}+d$. Similarly, $Q_2^{(l)} = Q_3^{(l)}+d$, and so on, until $Q_{p-1}^{(l)}=Q_p^{(p)}+d$. This iterative property is used to conveniently express the SDEs in collections $\mathcal{C}^{S}$ and $\mathcal{C}^{R}$ (see Proposition\,\ref{prop:q_stochastic_diff_eq}), and further to recursively solve the discretized $d$-dimensional MCARMA process by the recursive procedure from equation block corresponding to $X_{Q_1^{(l)}}$, for all $l$, to the final solution block corresponding to $X_{Q_p^{(p)}}(t)=X_k(t)\in\bs{X}_1(t)$.
\end{enumerate}

The following proposition states how the recursive parameter might be used to represent the one-dimensional SDEs in collections $\mathcal{C}^{S}$ and $\mathcal{C}^{R}$.
\begin{proposition}
    \label{prop:q_stochastic_diff_eq}
Let the recursive parameter be as given in Eq.\,\eqref{eq:the_recursive_parameter}, where $k\in\{1,\ldots , d\}$ and $l\in\{1,\ldots ,p\}$. For $1\leq l \leq p$ and $1\leq i\leq l$ ($i>1$ when $l=p$), corresponding to SDE numbers in collections $\mathcal{C}^{S}$ and $\mathcal{C}^{R}$ (see Table\,\ref{tab:index_collection_dependence}), we have that 
\begin{align}
    \label{eq:q_stochastic_diff_eq}
     X_{Q_i^{(l)}+d}(t)dt = dX_{Q_i^{(l)}}(t) - \sum_{r=1}^{m}\beta_{(k-1)m+r}^{(p-l+i-1)}dL_r(t).
\end{align}
\end{proposition}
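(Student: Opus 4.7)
The plan is to prove the identity by direct reindexing: I would interpret $Q_i^{(l)}$ and $Q_i^{(l)}+d$ as specific scalar component indices of $\bs{X}(t)\in\mathbb{R}^{pd}$, and then read off the claim from the component-wise form of the recursive SDE block in Eq.\,\eqref{eq:sde_recursive_equations}.

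First I would observe that, by Eq.\,\eqref{eq:the_recursive_parameter}, $Q_i^{(l)}=(l-i)d+k$ is the index corresponding to the $k$-th scalar entry of the sub-vector $\bs{X}_{l-i+1}(t)$, and $Q_i^{(l)}+d=(l-i+1)d+k$ is the $k$-th scalar entry of $\bs{X}_{l-i+2}(t)$. Under the hypotheses $1\le i\le l$, with $i>1$ when $l=p$, the block number $l':=l-i+1$ always lies in $\{1,\ldots,p-1\}$, so $X_{Q_i^{(l)}}(t)$ belongs to the collection $\mathcal{C}^S\cup\mathcal{C}^R$ and its dynamics are governed by Eq.\,\eqref{eq:sde_recursive_equations} rather than the AR block in Eq.\,\eqref{eq:sde_lag_dependence_structure}.

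Next I would take the $k$-th scalar component of Eq.\,\eqref{eq:sde_recursive_equations} at block index $l'$. Using the pairing $\kappa=p-l'$ from Table\,\ref{tab:index_collection_dependence} and the element-wise expansion in Eq.\,\eqref{eq:elementwise_p_and_q_terms} to unfold the term $\beta_{p-\kappa}\,d\bs{L}(t)$, this yields
\begin{equation*}
dX_{(l-i)d+k}(t)=X_{(l-i+1)d+k}(t)\,dt+\sum_{r=1}^{m}\beta_{(k-1)m+r}^{(p-l+i-1)}\,dL_r(t),
\end{equation*}
where I have substituted $\kappa=p-l'=p-l+i-1$ into the superscript of $\beta$. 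Solving for the drift term $X_{Q_i^{(l)}+d}(t)\,dt$ and rewriting the two scalar indices through $Q_i^{(l)}$ produces exactly Eq.\,\eqref{eq:q_stochastic_diff_eq}.

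There is no substantive analytic obstacle; the entire content of the proposition is index bookkeeping. The two points that require care are (i) checking that the admissible range of $(l,i)$ keeps $l'=l-i+1$ strictly below $p$, which is exactly why the condition $i>1$ is imposed when $l=p$, so that one is never accidentally invoking the AR equation in Eq.\,\eqref{eq:sde_lag_dependence_structure}; and (ii) translating the block-level superscript $\kappa=p-l'$ into the $(l,i)$-coordinates as $p-l+i-1$, so that it matches the moving-average coefficient appearing in Eq.\,\eqref{eq:q_stochastic_diff_eq}.
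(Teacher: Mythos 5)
Your proof is correct and rests on the same idea as the paper's: reducing the claim to the $k$-th scalar component of the recursive SDE block in Eq.\,\eqref{eq:sde_recursive_equations} with block index $l'=l-i+1$ and moving-average index $\kappa=p-l'=p-l+i-1$. The only difference is presentational: the paper walks through the cases $l=p$, $l=p-1$, $l\in\{2,\ldots,p-2\}$ and $l=1$ separately, iterating the identity $Q_i^{(l)}=Q_{i+1}^{(l)}+d$ and checking collection membership at each step, whereas you collapse all of this into a single direct index computation together with the observation that the hypotheses force $l'\in\{1,\ldots,p-1\}$ — which is arguably cleaner but not a substantively different argument.
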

\begin{proof}
See Appendix \ref{app:1}.
\end{proof}

\begin{example}
The MCARMA process dynamics with given parameters $p=4$, $d=2$ and $m=d$ is given by 
\begin{align}
    \begin{cases}
    \label{eq:example_sdes}
        d\bs{X}_1(t) = \bs{X}_2(t)dt +  \beta_1d\bs{L}(t)    \\
        d\bs{X}_2(t) = \bs{X}_3(t)dt +  \beta_2d\bs{L}(t)    \\
        d\bs{X}_3(t) = \bs{X}_4(t)dt +  \beta_3d\bs{L}(t)    \\
        d\bs{X}_4(t) = [-A_4\cdots -A_1]\bs{X}(t)dt +  \beta_4d\bs{L}(t),    
    \end{cases}
\end{align}
where $\bs{X}_1(t) = [X_1(t),X_2(t)],\ldots ,\bs{X}_4(t) = [X_7(t),X_8(t)]$, $\bs{X}(t)=[\bs{X}_1(t),\ldots ,\bs{X}_4(t)]$, $\bs{L}(t) = [L_1(t),L_2(t)]$, and $A_j$ and $\beta_{p-\kappa}$ are given in Eq.\,\eqref{eq:matrix_element_definition}. Notice that Eq.\,\eqref{eq:example_sdes} is a system of eight SDEs in four equation blocks. That is, one equation block in $\mathcal{C}^S$, two in $\mathcal{C}^R$ and one in $\mathcal{C}^{AR}$ (see Table\,\ref{tab:index_collection_dependence}). Proposition\,\ref{prop:q_stochastic_diff_eq} states that SDEs in collections $\mathcal{C}^{S}$ and $\mathcal{C}^{R}$ (corresponding to Eq.\,\eqref{eq:sde_recursive_equations}) can be written in terms of the recursive parameter. The following points illustrate how property 1 of the recursive parameter is utilized to recursively solve the discretized $d$-dimensional MCARMA process. Concentrating on dimension number $2$, corresponding to considering a recursive parameter with $k=2$ (see Eq.\,\eqref{eq:the_recursive_parameter}) we see that $dX_2(t)$ might be expressed by the recursive parameter in three different ways:
\begin{enumerate}
    \item With equation block $3$ as a starting point in the backwards recursive procedure, corresponding to equation number $Q_1^{(3)}$, $dX_2(t)$ is obtained when we hit $Q_3^{(3)}$, after two recursive steps;
    \item With equation block $2$ as starting point in the backwards recursive procedure, corresponding to equation number $Q_1^{(2)}$, $dX_2(t)$ is obtained when we hit $Q_2^{(2)}$, after one recursive step;
    \item With equation block $1$ as starting point in the backwards recursive procedure, $dX_2(t)$ is already obtained by $Q_1^{(1)}$.
\end{enumerate}
Further, $dX_4(t)$ might be expressed by the recursive parameter in two different ways:
\begin{enumerate}
    \item With equation block $3$ as a starting point in the backwards recursive procedure, corresponding to equation number $Q_1^{(3)}$, $dX_2(t)$ is obtained when we hit $Q_2^{(3)}$, after one recursive step;
    \item With equation block $2$ as starting point in the backwards recursive procedure, $dX_4(t)$ is already obtained by $Q_1^{(2)}$.
    \end{enumerate}
Notice that the SDEs in equation block $2$ might be expressed with the recursive parameter when starting the recursive procedure in block $3$ or $2$, however, not when starting it in equation block $1$. SDEs in equation block $1$ might be expressed with the recursive parameter regardless of the starting point of the recursive procedure. This is because the recursive parameter is defined to follow a backwards recursive pattern. All of the starting points, blocks $p$ to $2$, of the backwards recursive procedure of a general MCARMA process ends in block $1$ (SDEs in $\mathcal{C}^S$) with $X_{Q^{(l)}_{l}}(t)$, where $l$ equals the block number index of the recursive procedure starting point.
\end{example}


\section{\label{sect:transformation_relation}The transformation relation}


The L{\'e}vy-driven MCARMA process dynamics is discretized using an Euler scheme. Through the discretized version a transformation relation between the continuous time MCARMA process and the discrete time VARMA process is derived. The convergence rate of the discretized MCARMA process dynamics is finally assessed for a driving L{\'e}vy process with finite variance and infinite variations, which is relevant for the case study in Section\,\ref{sect:the_mcar_model}.


\subsection{From the MCARMA Euler scheme to the transformation relation\label{subsect:mcarma_euler}}


In this section, an approximated solution, $\bs{x}_1(t)$, of the $d$-dimensional MCARMA($p,q$) process with an $m$-dimensional driving L{\'e}vy process is found through discretization of the state space model dynamics, $d\bs{X}(t)$, in Eq.\,\eqref{eq:the_mcarma_model}. A backwards recursive procedure is performed on the discretized process to find the solution, which finally takes the form of a VARMA process with coefficients given by the MCARMA coefficients. This is what we refer to as the multivariate ARMA/CARMA transformation relation, see Theorem\,\ref{thm:the_transformation_relation}. \\

Consider the equidistant time discretization 
\begin{align}
\label{eq:equidistiant_time_discretization}
t^h = \{0=t_0<t_1<\dots <t_N=T\},    
\end{align}
of a given time interval $[0,T]$ with fixed incremental value $(0,1]\ni h = t_{i+1}-t_i$, for $i\in\{0,\ldots N-1\}$. That is, the discretized time interval might be written as $[0,h,2h,\ldots , Nh]$, and a time step from an arbitrary point in time, $t$, is represented by $t+h$. Assume that the law of stationary and independent increments $\Delta L(t)\triangleq L(t+h) - L(t)$ is known. Then, using the Euler scheme (see \cite{kloeden92} and \cite{protter97}), the SDEs in $\mathcal{C}^S\cup\mathcal{C}^R$ (see Proposition\,\ref{prop:q_stochastic_diff_eq}, Eq.\,\eqref{eq:q_stochastic_diff_eq}) might be written as the piecewise constant process 
\begin{align}
    \label{eq:q_stochastic_diff_eq_discretization}
    x_{Q_i^{(l)}+d}(t) = \frac{1}{h}\left(x_{Q_i^{(l)}}(t+h) - x_{Q_i^{(l)}}(t) - \sum_{r=1}^m\beta_{(k-1)m+r}^{(p-l+i-1)}\Delta L_r (t)\right).
\end{align}

Through a backwards recursive procedure of discretized SDEs on this form, an approximated solution of the MCARMA process (Eq.\,\eqref{eq:the_mcarma_model}) is found as $\bs{X}_1(t)\simeq \bs{x}_1(t) = [x_{(Q_{l}^{(l)}\mid k=1)}(t),\ldots ,x_{(Q_{l}^{(l)}\mid k=d)}(t)]= [x_1(t),\ldots ,x_d(t)]$. The backwards recursive procedure has $p-1$ starting points, namely equation blocks $p$ to $2$ (corresponding to SDEs in $\mathcal{C}^{R}\cup\mathcal{C}^{AR}$), where the SDE index number in each of these blocks are given by $Q_{1}^{(l)}$, for $l=2,\ldots ,p$. The final goal is to express all SDEs represented by $Q_{1}^{(l)}$, in terms of $Q_{l}^{(l)}$. See the example in Section\,\ref{subsect:the_recursive_parameter} to understand how the recursive procedure is iterated. As seen in Eq.\,\eqref{eq:sde_recursive_equations} and \eqref{eq:sde_lag_dependence_structure}, the structures of the SDEs in $\mathcal{C}^R$ and $\mathcal{C}^{AR}$ are different. In the following lemma, SDEs represented by $Q_{1}^{(l)}$ is written in terms of $Q_{l}^{(l)}$, for the the collection $\mathcal{C}^R$. This will further be modified in terms of the SDEs in $\mathcal{C}^{AR}$ to find the final approximated solution. \\

\begin{lemma}
\label{lemma:euler_approx_base_equations}
 The approximated solution of SDEs in equation block $l\in\{1,\ldots ,p-1\}$, for $i=1$, corresponding to the starting point of each recursive step in the derivation of the transformation relation in Theorem\,\ref{thm:the_transformation_relation}, is given by  
 \begin{align*}
    x_{Q_1^{(l)}+d}(t) =& \frac{1}{h^{l}}\sum_{n=0}^{l}(-1)^n b_n^{l} x_{k}(t + (l-n)h) \\
    & - \sum_{w=0}^{l-1}\frac{1}{h^{w+1}}\sum_{r=1}^{m}\beta_{(k-1)m+r}^{(p-l+w)}\sum_{v=0}^{w}(-1)^vb_v^w\Delta L_r(t+(w-v)h),
\end{align*}
where $p$ is the number of lags, $d$ is the total number of dimensions, $k\in\{1,\ldots ,d\}$ is the dimension of interest, $m$ is the number of independent driving L{\'e}vy processes, $b_n^{i}$ is defined recursively from Eq.\,\eqref{eq:pascal_triangle} (Appendix\,\ref{app:2}), and $h$ is the Euler discretization step size.
\end{lemma}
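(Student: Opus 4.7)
The plan is to prove the formula by induction on the equation block index $l$, using the Euler relation of Eq.\,\eqref{eq:q_stochastic_diff_eq_discretization} as the driving identity and Property\,2 of the recursive parameter to align indices between consecutive levels. The base case $l=1$ is immediate: applying Eq.\,\eqref{eq:q_stochastic_diff_eq_discretization} with $l=i=1$ and noting that $Q_1^{(1)}=k$ gives $x_{k+d}(t) = \frac{1}{h}[x_k(t+h) - x_k(t) - \sum_{r}\beta_{(k-1)m+r}^{(p-1)}\Delta L_r(t)]$, which matches the claim with $b_0^1=b_1^1=1$ and an outer $w$-sum reduced to the single term $w=0$.

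For the inductive step, I first apply Eq.\,\eqref{eq:q_stochastic_diff_eq_discretization} at level $l$ with $i=1$ to write $x_{Q_1^{(l)}+d}(t)$ as $\frac{1}{h}[x_{Q_1^{(l)}}(t+h)-x_{Q_1^{(l)}}(t)-\sum_{r}\beta_{(k-1)m+r}^{(p-l)}\Delta L_r(t)]$. The key bridge is the identity $Q_1^{(l)} = Q_1^{(l-1)} + d$, which follows from the definition in Eq.\,\eqref{eq:the_recursive_parameter} combined with Property\,2, since $(l-1)d+k = (l-2)d+k+d$. Thus $x_{Q_1^{(l)}}(\cdot) = x_{Q_1^{(l-1)}+d}(\cdot)$, and the induction hypothesis may be substituted directly into both $x_{Q_1^{(l)}}(t+h)$ and $x_{Q_1^{(l)}}(t)$.

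After substitution, the $x_k$ contribution takes the form of differences $x_k(t+(l-n)h)-x_k(t+(l-1-n)h)$ summed against $(-1)^n b_n^{l-1}/h^{l}$. Re-indexing $n\mapsto n-1$ in the second half and collecting terms yields the coefficient $(-1)^n[b_n^{l-1}+b_{n-1}^{l-1}]$ at $x_k(t+(l-n)h)$, together with boundary contributions at $n=0$ and $n=l$. By the Pascal-type recurrence of Eq.\,\eqref{eq:pascal_triangle} (Appendix\,\ref{app:2}), this sum equals $(-1)^n b_n^{l}$, while the boundary terms correspond to $b_0^{l}=b_0^{l-1}$ and $b_l^{l}=b_{l-1}^{l-1}$, producing exactly the inner sum $\sum_{n=0}^{l}(-1)^n b_n^{l}x_k(t+(l-n)h)/h^{l}$ claimed in the lemma. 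The $\Delta L_r$ contribution is handled analogously: the inductive hypothesis supplies sums over $w\in\{0,\ldots,l-2\}$ with superscript $(p-l+1+w)$, and after the shift $w'=w+1$ these become sums over $w'\in\{1,\ldots,l-1\}$ with superscript $(p-l+w')$, while the inner re-indexing $v\mapsto v-1$ again invokes the Pascal recurrence to produce the coefficients $b_v^{w'}$.

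The main obstacle I anticipate is the bookkeeping at the $w=0$ level of the $\Delta L_r$ sum: the shifted sum produced from the induction hypothesis covers only $w'\in\{1,\ldots,l-1\}$, while the claim requires the outer sum to start at $w=0$. What makes the induction close is precisely that the extra Euler term $-\frac{1}{h}\sum_{r}\beta^{(p-l)}_{(k-1)m+r}\Delta L_r(t)$ appearing in the level-$l$ application of Eq.\,\eqref{eq:q_stochastic_diff_eq_discretization} supplies exactly the missing $w=0$ contribution, with the correct superscript $(p-l)$, coefficient $b_0^{0}=1$, and time argument $\Delta L_r(t+(0-0)h)$. Verifying this matching and then invoking the Pascal recurrence concurrently for the $x_k$-coefficients and for the $\Delta L_r$-coefficients is the sole computational content of the induction step; once that check is carried out, the displayed expression follows.
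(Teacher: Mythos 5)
Your induction is correct and closes cleanly: the base case $l=1$ is Eq.\,\eqref{eq:q_stochastic_diff_eq_discretization} itself, the bridge $Q_1^{(l)}=Q_1^{(l-1)}+d$ is immediate from Eq.\,\eqref{eq:the_recursive_parameter}, and your accounting of the $w=0$ term of the L\'evy sum --- supplied by the extra $-\tfrac{1}{h}\sum_{r}\beta^{(p-l)}_{(k-1)m+r}\Delta L_r(t)$ from the level-$l$ Euler relation --- is exactly the point where the induction could have failed and does not. This is, however, a genuinely different organization from the paper's proof. The paper unrolls the recursion \emph{within} a block ($Q_1^{(l)}\to Q_2^{(l)}\to\cdots\to Q_l^{(l)}$, which is equivalent to your descent in $l$ since $Q_{i+1}^{(l)}=Q_1^{(l-i)}$ and the corresponding $\beta$-superscripts agree), explicitly computes the first two substitutions, and then obtains the closed form by citing the one-dimensional AR/CAR formula (Lemma 10.2 of \cite{benth2008}) and modifying it term by term: replacing $x_1$ by $x_k$, inserting the powers of $h$, adjusting the time arguments, and appending the accumulated L\'evy increments. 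Your argument buys rigor and self-containment: it verifies the coefficients directly via Pascal's rule rather than importing them from the univariate case, and it turns the paper's descriptive ``modify the formula as follows'' steps into a single checkable identity applied in parallel to the deterministic and stochastic parts. One small caveat: the recurrence as printed in Eq.\,\eqref{eq:pascal_triangle} reads $b_n^{\tilde q}=b_{n-1}^{\tilde q-1}$, whereas collecting your re-indexed terms requires the two-term Pascal rule $b_n^{\tilde q}=b_{n-1}^{\tilde q-1}+b_n^{\tilde q-1}$; the latter is clearly what is intended (the $b_n^{\tilde q}$ are binomial coefficients, as the example with coefficients $[4,-6,4,-1]$ confirms), so you are using the right identity, but you should state explicitly that you read Eq.\,\eqref{eq:pascal_triangle} as Pascal's rule rather than as literally printed.
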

\begin{proof}
See Appendix\,\ref{app:2}.
\end{proof}

Lemma\,\ref{lemma:euler_approx_base_equations} gives an implicit formula for the approximated solution of SDEs in $\mathcal{C}^R$. However, the autoregressive behaviour of the MCARMA process is described by SDEs in $\mathcal{C}^{AR}$. This information has to be added to the formula in Lemma \ref{lemma:euler_approx_base_equations}, to find the final approximated solution of the MCARMA process. Theorem\,\ref{thm:the_transformation_relation} states a formula for the $p$ times lagged variable of dimension $k\in\{1,\ldots ,d\}$. \\

\begin{theorem}
\label{thm:the_transformation_relation}
The multivariate ARMA/CARMA transformation relation is given by
\begin{align*}
    x_k(t+ph) = &x_k(t+(p-1)h)  -h^p\sum_{s=1}^{d}\alpha_{(k-1)d+s}^{(p)}x_{s}(t)+ h^{p-1}\sum_{r=1}^{m}\beta_{(k-1)m+r}^{(0)}\Delta L_r(t) \\
    &-\sum_{l=2}^{p}\sum_{s=1}^{d}\alpha_{(k-1)d+s}^{(p-l+1)}\Bigg(h^{p-l+1}\sum_{n=0}^{l-1}(-1)^n b_n^{l-1} x_s(t + (l-1-n)h)  \\ 
    &\qquad\qquad\qquad\qquad - \sum_{w=0}^{l-2}h^{p-w-1}\sum_{r=1}^{m}\beta_{(s-1)m+r}^{(p-l+w+1)}\sum_{v=0}^{w}(-1)^vb_v^w\Delta L_r(t+(w-v)h)\Bigg) \\
    &-\sum_{n=1}^{p-1}(-1)^n b_n^{p-1} \left(x_k(t + (p-n)h) - x_k(t + (p-1-n)h)\right) \\
    & + \sum_{w=0}^{p-2}h^{p-w-2}\sum_{r=1}^{m}\beta_{(k-1)m+r}^{(w+1)}\sum_{v=0}^{w}(-1)^vb_v^w\left(\Delta L_r(t+(w-v+1)h) - \Delta L_r(t+(w-v)h)\right),
\end{align*}
where $p$ is the number of lags, $d$ is the total number of dimensions, $k\in\{1,\ldots ,d\}$ is the dimension of interest, $m$ is the number of independent driving L{\'e}vy processes, $b_n^{i}$ is defined recursively from Eq.\,\eqref{eq:pascal_triangle} (Appendix\,\ref{app:2}), and $h$ is the Euler discretization step size.
\end{theorem}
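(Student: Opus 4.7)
The plan is to start from the single remaining discretized SDE that has not yet been exploited, namely the one in the AR-collection $\mathcal{C}^{AR}$ for dimension $k$, and feed into it the representation of the intermediate coordinates supplied by Lemma \ref{lemma:euler_approx_base_equations}. All coordinates $x_{(l-1)d+s}(t)$ with $l\geq 2$ that appear in the AR-block will be rewritten as linear combinations of values $x_s(t+jh)$ and Lévy increments, and the left-hand side will likewise be reduced to values of $x_k$ at the times $t,t+h,\ldots,t+ph$, so that isolating $x_k(t+ph)$ yields the claimed identity.

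Concretely, I would first write the Euler scheme of the SDE block $l=p$ (see Eq.\,\eqref{eq:sde_lag_dependence_structure}) element-wise for dimension $k$, using the element-wise expansions in Eq.\,\eqref{eq:elementwise_p_and_q_terms}, to obtain
\begin{align*}
x_{(p-1)d+k}(t+h)-x_{(p-1)d+k}(t) = -h\sum_{l=1}^{p}\sum_{s=1}^{d}\alpha_{(k-1)d+s}^{(p-l+1)}x_{(l-1)d+s}(t) + \sum_{r=1}^{m}\beta_{(k-1)m+r}^{(0)}\Delta L_r(t).
\end{align*}
Because $(l-1)d+s = Q_{1}^{(l-1)}+d$ when the dimension label in the recursive parameter is set to $s$, Lemma \ref{lemma:euler_approx_base_equations} (applied with block index $l-1$ and dimension $s$) expresses $x_{(l-1)d+s}(t)$ for every $l\in\{2,\ldots,p\}$ as a linear combination of $x_s(t+(l-1-n)h)$ and of Lévy increments with superscript $(p-l+w+1)$ on $\beta_{(s-1)m+r}^{(\cdot)}$. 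The $l=1$ term needs no substitution since $x_{(l-1)d+s}(t)=x_s(t)$, and it produces the $-h^p\sum_{s}\alpha_{(k-1)d+s}^{(p)}x_s(t)$ contribution after multiplying by $h^{p-1}$.

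For the left-hand side, the same Lemma (now with block index $p-1$ and dimension $k$) rewrites both $x_{(p-1)d+k}(t)$ and $x_{(p-1)d+k}(t+h)$ in terms of $x_k$ at times $t,t+h,\ldots,t+(p-1)h$ and of $\beta$-weighted Lévy increments. Taking the difference is the key simplification: the $n=0$ term in the Lemma produces
\begin{align*}
\frac{1}{h^{p-1}}\bigl(x_k(t+ph)-x_k(t+(p-1)h)\bigr),
\end{align*}
which after multiplication by $h^{p-1}$ isolates the unknown $x_k(t+ph)$. The remaining $n=1,\ldots,p-1$ terms collapse into the telescoping sum appearing as the fifth line of the theorem, and the difference of the $\beta$-terms yields the final line, where the increment of increments $\Delta L_r(t+(w-v+1)h)-\Delta L_r(t+(w-v)h)$ arises naturally.

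The main obstacle is purely bookkeeping: one must keep careful track of three coupled indices at once — the block index (hence the superscript $(p-l+1)$ on $\alpha$ and $(p-l+w+1)$ on $\beta$), the inner dimension index $s$ versus the fixed dimension $k$, and the time shift $(l-1-n)h$ versus $(w-v)h$ — while making sure the powers of $h$ collected from $h^{-(l-1)}$, $h^{-(w+1)}$ and the overall factor $h^{p-1}$ produce the exponents $h^{p-l+1}$, $h^{p-w-1}$, $h^{p-w-2}$ and $h^p$ listed in the statement. Once the substitutions above are carried out, collecting the resulting terms line-by-line and matching them against the four groups in the theorem is mechanical and completes the proof.
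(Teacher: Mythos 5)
Your proposal is correct and follows essentially the same route as the paper's own proof: Euler-discretize the AR-block equation, split off the $l=1$ term, substitute Lemma \ref{lemma:euler_approx_base_equations} (with block index $l-1$, dimension $s$ on the right and block index $p-1$, dimension $k$ on the left), multiply through by $h^{p-1}$, and solve for $x_k(t+ph)$. The index and $h$-power bookkeeping you outline matches the paper's computation exactly.
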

\begin{proof}
See Appendix\,\ref{app:3}.
\end{proof}
Notice that the multivariate ARMA/CARMA transformation relation expresses the $p$ times lagged variable of dimension $k\in\{1,\ldots ,d\}$ as a linear combination of $0$ to $p-1$ lagged variables of all model dimensions and driving L{\'e}vy processes. That is, the approximated solution of the MCARMA process is represented by a VARMA process. \\


\subsection{\label{subsect:convergence_rates}An analysis of convergence rates}
A multivariate ARMA/CARMA transformation relation was derived in the previous section through an Euler discretization of the L{\'evy}-driven MCARMA process. For any practical application, it is important that the Euler scheme converges. An Euler scheme convergence rate for jump diffusions with jumps of finite variance and infinite variations is derived in this section. In Section\,\ref{subsect:nig_levy_process} we will see that the result holds for an NIG-L{\'e}vy-driven MCARMA process. Note that vector processes and deterministic vector functions are written without bold font in this section. \\

Consider the jump diffusion 
\begin{align}
    \label{eq:multidimensional_sde_with_small_jumps}
    Z(t) =& Z(0) + \int_{0}^{t}\tilde{a}(s,Z(s))ds +\int_0^t b(s,Z(s))dW(s)+ \int_0^t\int_{\R^m\backslash\{0\}}\gamma(s-,Z(s-),z)\tilde{N}(ds,dz),
\end{align}
where $\tilde{a}(t,x)=\left(a(t,x)+\int_{\abs{z}\geq \epsilon}\gamma(t,x,z)\nu(dz)\right)$. We assume that the usual integrability conditions of coefficients are satisfied, and that $W(t)$ and $N(t,U)$ are independent stochastic processes. Further, we assume that $\gamma(t,x,z)=g(z)\eta(t,x)$, where $t\to\eta(t,x)$ is c{\`a}dl{\`a}g, and that the finite variance condition
\begin{align*}
    G^2(\infty) = \int_{\R^m\backslash \{0\}}g^2(z)\nu(dz)<\infty,
\end{align*}
is satisfied. \\

Convergence rates of jump diffusions depend on the behaviour of the L{\'e}vy measure at origin, see, e.g., \cite{kuhn19}. A complete discussion on the Euler scheme (among others) and convergence rates for L{\'e}vy processes with L{\'e}vy measure of finite total mass, $\nu(U)<\infty$, is found in \cite{platen10}. Note that this corresponds to L{\'e}vy processes where the jump part is a compound Poisson process. For L{\'e}vy processes with L{\'e}vy measure of infinite total mass, $\nu(U)=\infty$, the question of Euler scheme convergence rates is more intricate. In \cite{asmussen01}, the cases $\int_{\abs{z}\leq 1}\abs{z}\nu(dz)<\infty$ and $\int_{\abs{z}\leq 1}\abs{z}\nu(dz)=\infty$ are discussed. In the following, inspired by work in \cite{platen10}, \cite{asmussen01} and \cite{benth11}, an Euler scheme convergence rate for jump diffusions of infinite variations, as given in Eq.\,\eqref{eq:multidimensional_sde_with_small_jumps}, is derived for the case $\nu(U)=\infty$. \\

In cases where the jump part of $Z(t)$ is of finite variation, convergence rates might be obtained by replacing the small jumps by their expected value, or by simply removing the small jumps. In the infinite variation case, removing the small jumps would not be appropriate, as the small jumps dominate in that case. Define the function 
\begin{align}
    \label{eq:G_squared_difinition}
    G^2(\epsilon) \triangleq \int_{\abs{z}<\epsilon} g^2(z)\nu(dz), 
\end{align}
with natural property $\lim_{\epsilon \to 0}G^2(\epsilon)= 0$. We propose, as in \cite{benth11}, to approximate the small jumps part of Eq.\,\eqref{eq:multidimensional_sde_with_small_jumps} as 
\begin{align}
    \label{eq:small_jumps_approximation}
    \int_0^t\int_{\abs{z}<\epsilon}g(z)\eta(s-,Z(s-))\tilde{N}(ds,dz) \simeq \int_0^t G(\epsilon)\eta(s,Z_\epsilon(s))dB(s) \quad\leftrightarrow\quad S(t)\simeq S_\epsilon(t),
\end{align}
where $B(t)$ is a Brownian motion process independent of $W(t)$ and $N(t,U)$. That is, $Z(t)$ in Eq.\,\eqref{eq:multidimensional_sde_with_small_jumps} is approximated by the process
\begin{align}
    \label{eq:multidimensional_sde_with_small_jumps_approximation}
    \begin{split}
    Z_\epsilon(t) =& Z(0) +\int_{0}^{t}\tilde{a}(s,Z_\epsilon(s))ds +\int_0^t b(s,Z_\epsilon(s))dW(s)+\int_{0}^{t}G(\epsilon)\eta(s,Z_\epsilon(s))dB(s)\\
    &\quad + \int_0^t\int_{\abs{z}\geq \epsilon}g(z)\eta(s-,Z_\epsilon(s-))\tilde{N}(ds,dz).
    \end{split}
\end{align}
Notice that the jump part of $Z_\epsilon(t)$ is represented by a general (compensated) compound Poisson process. Before stating the convergence rate for the approximation $Z_\epsilon(t)$, we state an intermediate result. The following assumption, inspired by \cite{platen10}, holds throughout this work. \\

\begin{assumption}
\label{ass:assumption1}
    Assume that the jump diffusions $Z(t)$ and $Z_\epsilon(t)$, in Eq.\,\eqref{eq:multidimensional_sde_with_small_jumps} and \eqref{eq:multidimensional_sde_with_small_jumps_approximation} respectively, satisfy 
\begin{align*}
    E(\abs{Z(0)}^2) = E(\abs{Z_\epsilon(0)}^2)<\infty \quad\text{and}\quad E(\abs{Z_\epsilon(0)-Z_\epsilon^h(0)}^2)\leq K_0h,
\end{align*}
where $Z_\epsilon^h(0)$ represents the initial condition of the Euler discretization of $Z_\epsilon(t)$. Further, for $t\in[0,T]$ and $x,y\in\R^m$, the drift and diffusion coefficients, as well as $\eta(t,x)$ in the separable jump coefficient $\gamma(t,x,z)=g(z)\eta(t,x)$, satisfy the Lipschitz and linear growth conditions
\begin{align}
    \label{eq:lipschitz_linear_growth_conditions}
    \abs{f(t,x)-f(t,y)}\leq K_1\abs{x-y}\quad \text{and}\quad \abs{f(t,x)}^2\leq K_2(1+\abs{x}^2).
\end{align}
Define $K=\max(K_1,K_2)$, which is used in the following proofs without notice.
\end{assumption}

\begin{lemma}
\label{lemma:error_convergence}
    Let $S(t)$ and $S_\epsilon(t)$ be given as in Eq.\,\eqref{eq:small_jumps_approximation}, and denote the process $S_\epsilon(t)$ by $S_{\epsilon,Z(t)}(t)$, when it is state-dependent on $Z(t)$ rather than $Z_{\epsilon}(t)$ (see Eq.\,\eqref{eq:multidimensional_sde_with_small_jumps} and \eqref{eq:multidimensional_sde_with_small_jumps_approximation}). The convergence rate of $S_{\epsilon,Z(t)}(t)$ is given by
    \begin{align*}
        \|\sup_{0\leq t\leq T}\abs{S(t) - S_{\epsilon,Z(t)}(t)}\|_2 \leq C^{1/2}G(\epsilon),
    \end{align*}
    where $C=8KT(1+C_1e^{C_1})$, $C_1= 2K(2T + 2 +G^2(\infty))/T$ and $G(\epsilon)$ given in Eq.\,\eqref{eq:G_squared_difinition}. 
\end{lemma}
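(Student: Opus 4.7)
The plan is to exploit that both $S(t)$ and $S_{\epsilon,Z(t)}(t)$ are zero-mean, square-integrable martingales driven by \emph{independent} noises (the compensated Poisson measure $\tilde N$ on one side, the independent Brownian motion $B$ on the other, with integrands adapted to the $(W,N)$-filtration), and to reduce the whole estimate to a standard second-moment bound on $Z(t)$.

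First I would apply Doob's $L^2$ maximal inequality to the martingale difference $M(t):=S(t)-S_{\epsilon,Z(t)}(t)$ to obtain $\|\sup_{0\le t\le T}|M(t)|\|_2^2 \le 4\,E[|M(T)|^2]$. Because $B$ is independent of $(W,N)$ and the Brownian integrand depends only on $Z(s)$, conditioning on $\mathcal{F}^{W,N}_T$ shows $E[S(T)\,S_{\epsilon,Z(T)}(T)]=0$; the cross term drops and $E[|M(T)|^2]=E[|S(T)|^2]+E[|S_{\epsilon,Z(T)}(T)|^2]$. The Itô isometry for the compensated Poisson integral (using the separable form $g(z)\eta(s,Z(s-))$ together with Fubini) yields $E[|S(T)|^2]=G^2(\epsilon)\int_0^T E[\eta^2(s,Z(s))]\,ds$, and the Itô isometry for the Brownian integral produces exactly the same expression. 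Combining gives $\|\sup_t|M(t)|\|_2^2\le 8\,G^2(\epsilon)\int_0^T E[\eta^2(s,Z(s))]\,ds$.

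It then remains to bound the time integral of $E[\eta^2(s,Z(s))]$. The linear growth bound from Assumption~\ref{ass:assumption1} gives $\eta^2(s,Z(s))\le K(1+|Z(s)|^2)$, so it suffices to control $\sup_{s\le T}E[|Z(s)|^2]$. For this I would square the SDE in Eq.~\eqref{eq:multidimensional_sde_with_small_jumps}, use $(a+b+c+d)^2\le 4(a^2+b^2+c^2+d^2)$, apply Cauchy--Schwarz to the drift integral and the Itô isometries to the Brownian and \emph{full} compensated Poisson integrals (the latter contributing $G^2(\infty)$), and then invoke the Lipschitz/linear-growth bounds from Assumption~\ref{ass:assumption1}. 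This produces a linear integral inequality of the form $f(t):=E[|Z(t)|^2]\le \text{const}+C_1\int_0^t f(s)\,ds$ with $C_1=2K(2T+2+G^2(\infty))/T$, and Gronwall's lemma yields $f(t)\le C_1 e^{C_1}$. Substituting back produces $\int_0^T E[\eta^2(s,Z(s))]\,ds \le KT(1+C_1 e^{C_1})$, so that taking square roots delivers $\|\sup_t|M(t)|\|_2\le\bigl(8KT(1+C_1 e^{C_1})\bigr)^{1/2}G(\epsilon)=C^{1/2}G(\epsilon)$.

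The main obstacle I expect is purely the bookkeeping of constants needed to land exactly on the stated $C$ and $C_1$: the number of additive terms in the SDE, the $T$ from Cauchy--Schwarz on the drift, the $G^2(\infty)$ from the full jump integral, and the initial-moment hypothesis in Assumption~\ref{ass:assumption1} all feed into the Gronwall constant, and small slippages inflate the constant. The orthogonality step is routine but must be invoked explicitly via conditioning on $\mathcal{F}^{W,N}_T$, since $Z$ itself depends on both $W$ and $N$.
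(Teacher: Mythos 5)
Your proposal follows essentially the same route as the paper's proof: Doob's maximal inequality on the martingale difference, dropping the cross term by independence of $B$ and $(W,N)$, the two It\^o isometries yielding $8G^2(\epsilon)\int_0^T E[\eta^2]\,ds$, and a Gr\"onwall-based second-moment bound on $Z$ to reach $C=8KT(1+C_1e^{C_1})$. The only cosmetic difference is that you bound $E[|Z(t)|^2]$ pointwise in $t$ while the paper bounds $\|\sup_{0\le t\le u}|Z(t)|\|_2^2$; both suffice and lead to the same constants.
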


\begin{proof}
Define $f\triangleq f(t,Z(t))$. First, we show a boundedness result of $Z(t)$. By the triangle inequality, the identity $\abs{f+g}^p\leq 2^{p-1}(\abs{f}^p + \abs{g}^p)$, Cauchy-Schwarz inequality, It{\^o} isometry, monotone convergence and linear growth, we have
\begin{align}
    \label{eq:norm_of_Z_in_proof}
    \begin{split}
    \|\sup_{0\leq t\leq T}\abs{Z(t)}\|_2^2 &= E\Bigg[\abs{\sup_{0\leq t \leq T}\abs{\int_{0}^{t}\tilde{a}ds + \int_{0}^{t}b dW(s) +\int_0^t\int_{\R^m\backslash \{0\}}g(z)\eta\tilde{N}(ds,dz)}}^2\Bigg] \\
    &\leq E\Bigg[\abs{\sup_{0\leq t \leq T}\left(\abs{\int_{0}^{t}\tilde{a} ds} + \abs{\int_{0}^{t}b dW(s)} +\abs{\int_0^t\int_{\R^m\backslash \{0\}}g(z)\eta\tilde{N}(ds,dz)}\right)}^2\Bigg] \\
    &\leq E\Bigg[\sup_{0\leq t \leq T}\left(4\abs{\int_{0}^{t}\tilde{a} ds}^2 + 4\abs{\int_{0}^{t}b dW(s)}^2 + 2\abs{\int_0^t\int_{\R^m\backslash \{0\}}g(z)\eta\tilde{N}(ds,dz)}^2\right)\Bigg] \\
    &\leq E\Bigg[4T\int_{0}^{T}\tilde{a}^2 ds + 4\int_{0}^{T}b^2 ds + 2G^2(\infty)\int_0^T\eta^2 ds\Bigg] \\
    &\leq 2(2T + 2 +G^2(\infty))\int_{0}^{T}E\left[K(1+\abs{Z(s)}^2)\right] ds\\
    &\leq 2K(2T + 2 +G^2(\infty))\left(T + \int_{0}^{T}E\left[\sup_{0\leq t\leq s}\abs{Z(t)}^2\right] ds\right) \\
    &= C_1 + \frac{C_1}{T}\int_{0}^{T}\|\sup_{0\leq t\leq s}\abs{Z(t)}\|_2^2 ds,
    \end{split}
\end{align}
where $C_1= 2K(2T + 2 +G^2(\infty))/T$. Define $F(u)\triangleq \|\sup_{0\leq t\leq u}\abs{Z(t)}\|_2^2$. Then, by Gr{\"o}nwall's inequality, we find that 
\begin{align*}
    F(T) \leq C_1 + \frac{C_1}{T}\int_0^T F(s)ds \quad\leftrightarrow\quad F(T)\leq C_1e^{C_1}.
\end{align*}
Now, continue to prove the stated convergence result. By Doob's maximal inequality, the independence property of processes, the expectation rule of It{\^o} integrals, It{\^o} isometry, monotone convergence and linear growth, we have that
\begin{align*}
    \|\sup_{0\leq t\leq T}\abs{S(t) - S_{\epsilon,Z(t)}(t)}\|_2^2 &\leq 4 E\left[\left(\int_0^T\int_{\abs{z}<\epsilon}g(z)\eta\tilde{N}(ds,dz) - \int_0^T G(\epsilon)\eta dB(s) \right)^2\right]\\
    &=4 E\left[\left(\int_0^T\int_{\abs{z}<\epsilon}g(z)\eta\tilde{N}(ds,dz)\right)^2 + \left(\int_0^T G(\epsilon)\eta dB(s) \right)^2\right]\\
    &= 4 E\left[\int_0^T G^2(\epsilon)\eta^2 ds  + \int_0^T G^2(\epsilon)\eta^2 ds \right] \\
    & =8G^2(\epsilon)\int_0^T E\left[\eta^2 \right]ds \leq 8G^2(\epsilon)\int_0^T E\left[ K(1+\abs{Z(s)}^2) \right]ds \\
    & \leq 8G^2(\epsilon)K\left(T + \int_0^T E\left[ \sup_{0\leq t\leq s} \abs{Z(t)}^2 \right]ds \right) \\
    & = 8G^2(\epsilon)K \left(T + \int_0^T \|\sup_{0\leq t\leq s}\abs{Z(t)}\|_2^2 ds\right).
\end{align*}
Since $\|\sup_{0\leq t\leq s}\abs{Z(t)}\|_2^2 \leq \|\sup_{0\leq t\leq T}\abs{Z(t)}\|_2^2\leq C_1e^{C_1}$, we have that
\begin{align*}
    \|\sup_{0\leq t\leq T}\abs{S(t) - S_{\epsilon,Z(t)}(t)}\|_2^2 &\leq 8G^2(\epsilon)K \left(T + \int_0^T C_1e^{C_1} ds\right),
\end{align*}
and the proof is complete.
\end{proof}
Note that $G(\epsilon)$ is finite by definition, and that the bound in Lemma\,\ref{lemma:error_convergence} converges to zero as $\epsilon\to 0$. \\

The convergence rate of the approximated jump diffusion $Z_\epsilon(t)$ (Eq.\,\eqref{eq:multidimensional_sde_with_small_jumps_approximation}) is derived in the following proposition. \\

\begin{proposition}
    \label{prop:approx_nig-levy_process_convergence_rate}
Let $Z(t)$ and $Z_\epsilon(t)$ be as given in Eq.\,\eqref{eq:multidimensional_sde_with_small_jumps} and \eqref{eq:multidimensional_sde_with_small_jumps_approximation} respectively. The convergence rate of $Z_\epsilon(t)$ is given by
\begin{align*}
    \|\sup_{0\leq t \leq T}\abs{Z(t)-Z_\epsilon(t)}\|_2 \leq C_\epsilon^{1/2}G(\epsilon),
\end{align*}
 where $C_\epsilon=8Ce^{C_2T}$, $C_2= 4K(T+1+G^2(\infty)+G^2(\epsilon))$, and $C$ and $G(\epsilon)$ is given in Lemma\,\ref{lemma:error_convergence} and Eq.\,\eqref{eq:G_squared_difinition} respectively.
\end{proposition}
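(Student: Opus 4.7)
The plan is to decompose $Z(t)-Z_\epsilon(t)$ into a sum whose leading summand matches the object already bounded in Lemma~\ref{lemma:error_convergence}, and whose remaining summands can be absorbed by a Gr\"onwall-type argument. Subtracting Eq.~\eqref{eq:multidimensional_sde_with_small_jumps_approximation} from Eq.~\eqref{eq:multidimensional_sde_with_small_jumps} and adding and subtracting the auxiliary term $\int_0^t G(\epsilon)\eta(s,Z(s))\,dB(s)$ produces the splitting
\begin{align*}
Z(t)-Z_\epsilon(t) &= \int_0^t\bigl[\tilde a(s,Z(s))-\tilde a(s,Z_\epsilon(s))\bigr]ds + \int_0^t\bigl[b(s,Z(s))-b(s,Z_\epsilon(s))\bigr]dW(s) \\
&\quad + \int_0^t\int_{\abs{z}\geq \epsilon}g(z)\bigl[\eta(s-,Z(s-))-\eta(s-,Z_\epsilon(s-))\bigr]\tilde N(ds,dz) \\
&\quad + \bigl[S(t)-S_{\epsilon,Z(t)}(t)\bigr] + \int_0^t G(\epsilon)\bigl[\eta(s,Z(s))-\eta(s,Z_\epsilon(s))\bigr]dB(s),
\end{align*}
where the fourth summand is exactly the quantity controlled by Lemma~\ref{lemma:error_convergence}.

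I would then take the supremum over $t\in[0,T]$, square, and apply $\|\cdot\|_2$. The four Lipschitz-type summands are handled by standard machinery: Cauchy--Schwarz on the drift integral, Doob's $L^2$ maximal inequality together with It\^o's isometry for the two Brownian stochastic integrals, and Doob together with the compensated Poisson isometry for the large-jump martingale. Using the Lipschitz bound from Assumption~\ref{ass:assumption1}, each of these four terms yields a contribution of the form $K\cdot(\text{const})\int_0^T \phi(s)\,ds$, where $\phi(s)=\|\sup_{0\leq u\leq s}\abs{Z(u)-Z_\epsilon(u)}\|_2^2$ and the four constants collect into $T$, a numerical factor, $G^2(\infty)$, and $G^2(\epsilon)$ respectively. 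Combining with Lemma~\ref{lemma:error_convergence} applied to the fourth summand, the inequality takes the Gr\"onwall-ready form
\begin{align*}
\phi(T)\leq 8CG^2(\epsilon) + C_2\int_0^T \phi(s)\,ds,
\end{align*}
with $C_2=4K(T+1+G^2(\infty)+G^2(\epsilon))$. Applying Gr\"onwall's inequality then yields $\phi(T)\leq 8Ce^{C_2T}G^2(\epsilon)=C_\epsilon G^2(\epsilon)$, and taking square roots gives the claim.

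The main subtlety is the fourth summand: the Brownian approximation inside $Z_\epsilon$ is state-dependent on $Z_\epsilon$ itself, whereas Lemma~\ref{lemma:error_convergence} only controls the version state-dependent on $Z$. The add-and-subtract step handles this at the cost of producing one additional Lipschitz-type correction (the fifth summand), to which the linear-growth/Lipschitz hypotheses from Assumption~\ref{ass:assumption1} directly apply. A secondary concern is constant tracking: to recover exactly $C_\epsilon=8Ce^{C_2T}$ with the stated $C_2$, one has to choose the splitting inequality with care, for example a two-step application of $\abs{a+b}^2\leq 2\abs{a}^2+2\abs{b}^2$ that first separates the Lemma summand from the sum of the four Lipschitz summands, followed by $\abs{a_1+\dots+a_4}^2\leq 4\sum_i\abs{a_i}^2$, so that the prefactor of $CG^2(\epsilon)$ and the coefficients collected into $C_2$ match the stated values.
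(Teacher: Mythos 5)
Your proposal follows essentially the same route as the paper: the identical add-and-subtract of $\int_0^t G(\epsilon)\eta(s,Z(s))\,dB(s)$ producing the five-term decomposition, the same isolation of $S(t)-S_{\epsilon,Z(t)}(t)$ for Lemma\,\ref{lemma:error_convergence}, the same Lipschitz/isometry treatment of the remaining four terms, and the same Gr\"onwall step yielding $\phi(T)\leq 8CG^2(\epsilon)+C_2\int_0^T\phi(s)\,ds$ with the stated $C_2$. The only (immaterial) difference is in how the splitting constants are grouped — the paper arrives at coefficients $4,4,4,8$ on the Lipschitz terms and $8$ on the Lemma term, with the large-jump term contributing $G^2(\infty)-G^2(\epsilon)$ so that the net is $4K(T+1+G^2(\infty)+G^2(\epsilon))$ — a bookkeeping point you already flag yourself.
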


\begin{proof}
Define $f\triangleq f(t,Z(t))$ and $f_\epsilon \triangleq f(t,Z_\epsilon(t))$. As in the proof of Lemma\,\ref{lemma:error_convergence} (Eq.\,\eqref{eq:norm_of_Z_in_proof}), use the triangle inequality, the identity $\abs{f+g}^p\leq 2^{p-1}(\abs{f}^p + \abs{g}^p)$, Cauchy-Schwarz inequality, It{\^o} isometry and monotone convergence to obtain
\begin{align*}
    &\|\sup_{0\leq t \leq T}\abs{Z(t)-Z_\epsilon(t)}\|_2^2 \\
    &= E\Bigg[\abs{\sup_{0\leq t \leq T}\abs{\int_{0}^{t}(\tilde{a}-\tilde{a}_\epsilon) ds + \int_{0}^{t}(b-b_\epsilon ) dW(s) + \int_0^t\int_{\abs{z}\geq\epsilon}g(z)(\eta-\eta_\epsilon)\tilde{N}(ds,dz)\\
    & \quad\quad\quad  + \int_0^t G(\epsilon)(\eta -\eta_\epsilon)dB(s) + \int_0^t\int_{\abs{z}<\epsilon}g(z)\eta \tilde{N}(ds,dz) - \int_0^t G(\epsilon)\eta dB(s)}}^2\Bigg] \\
    &\leq 4T\int_0^TE\left[\abs{\tilde{a}-\tilde{a}_\epsilon}^2\right]ds + 4\int_0^TE\left[\abs{b-b_\epsilon}^2\right]ds + 4(G^2(\infty)-G^2(\epsilon))\int_0^TE\left[\abs{\eta-\eta_\epsilon}^2\right]ds \\
    &\quad +8G^2(\epsilon)\int_0^TE\left[\abs{\eta-\eta_\epsilon}^2\right]ds + 8E\left[\sup_{0\leq t \leq T}\abs{S(t)-S_{\epsilon ,Z(t)}(t)}^2\right].
\end{align*}
By Lipschitz continuity and Lemma\,\ref{lemma:error_convergence} we further find
\begin{align*}
    &\|\sup_{0\leq t \leq T}\abs{Z(t)-Z_\epsilon(t)}\|_2^2 \\
    &\leq 8CG^2(\epsilon) + 4K(T+1+G^2(\infty)+G^2(\epsilon))\int_0^TE\left[\abs{Z(s)-Z_\epsilon(s)}^2\right]ds\\
    &= 8CG^2(\epsilon) + C_2\int_0^T\|\sup_{0\leq t \leq s}\abs{Z(t)-Z_\epsilon(t)}\|_2^2 ds,
\end{align*}
where $C_2= 4K(T+1+G^2(\infty)+G^2(\epsilon))$. Define $F(u)\triangleq \|\sup_{0\leq t\leq u}\abs{Z(t)}\|_2^2$. Then, by Gr{\"o}nwall's inequality, we have 
\begin{align*}
    F(T) \leq 8CG^2(\epsilon) + C_2\int_0^T F(s)ds \quad\leftrightarrow\quad F(T)\leq 8CG^2(\epsilon)e^{C_{2}T}.
\end{align*}
This concludes the proof.
\end{proof}
As for the convergence rate in Lemma\,\ref{lemma:error_convergence}, the convergence rate of $Z_\epsilon(t)$ converges to zero as $\epsilon\to 0$. \\

Recall that the aim of this section is to derive an Euler scheme convergence rate for the jump diffusion $Z(t)$ in Eq.\,\eqref{eq:multidimensional_sde_with_small_jumps} with infinite variations and $\nu(U)=\infty$. The convergence rate of the Euler discretization of the approximated process $Z_\epsilon(t)$ is therefore derived next. As we will see in Section\,\ref{sect:the_mcar_model}, this convergence result provides a foundation to estimate MCARMA models driven by NIG-L{\'e}vy processes. \\

\begin{proposition}
    \label{prop:euler_scheme_of_approximated_sde_convergence_rate}
Let $Z(t)$ and $Z_\epsilon(t)$ be as given in Eq.\,\eqref{eq:multidimensional_sde_with_small_jumps} and \eqref{eq:multidimensional_sde_with_small_jumps_approximation} respectively, and let $Z_\epsilon^h(t)\triangleq\{Z_\epsilon^h(t)\}_{t\in[0,T]}$ denote the Euler scheme of $Z_\epsilon(t)$ with equidistant time discretization step size $h\in(0,1]$. The convergence rate of $Z_\epsilon^h(t)$ is given by 
\begin{align*}
    \|\sup_{0\leq t\leq T}\abs{Z(t)-Z_\epsilon^h(t)}\|_2 \leq \sqrt{2}\left(C_\epsilon G^2(\epsilon)+K_3h\right)^{1/2},
\end{align*}
where $K_3$ is a finite positive constant, and $C_\epsilon$ and $G(\epsilon)$ is given in Proposition\,\ref{prop:approx_nig-levy_process_convergence_rate} and Eq.\,\eqref{eq:G_squared_difinition} respectively.
\end{proposition}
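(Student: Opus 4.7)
The plan is to split the error via the triangle inequality into a discretization error and an approximation error that has already been controlled. Concretely, I would write
\begin{align*}
\|\sup_{0\leq t\leq T}\abs{Z(t)-Z_\epsilon^h(t)}\|_2 \leq \|\sup_{0\leq t\leq T}\abs{Z(t)-Z_\epsilon(t)}\|_2 + \|\sup_{0\leq t\leq T}\abs{Z_\epsilon(t)-Z_\epsilon^h(t)}\|_2,
\end{align*}
and then handle the two summands separately. The first summand is exactly the quantity bounded in Proposition\,\ref{prop:approx_nig-levy_process_convergence_rate} by $C_\epsilon^{1/2}G(\epsilon)$, so that part is free.

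For the second summand, I would observe that $Z_\epsilon(t)$ has drift $\tilde a$, diffusion coefficient $b$, an additional Brownian coefficient $G(\epsilon)\eta$ (independent Brownian motion $B$), and a compensated jump term driven by a \emph{compound Poisson} measure because $\nu(\{|z|\geq\epsilon\})<\infty$. Under Assumption\,\ref{ass:assumption1} all coefficients satisfy the Lipschitz and linear growth conditions in Eq.\,\eqref{eq:lipschitz_linear_growth_conditions}, and $E\abs{Z_\epsilon(0)-Z_\epsilon^h(0)}^2\leq K_0 h$. This is precisely the setting of the strong Euler convergence theorem for jump diffusions with finite-activity jumps (see \cite{platen10}), which yields a bound of the form
\begin{align*}
\|\sup_{0\leq t\leq T}\abs{Z_\epsilon(t)-Z_\epsilon^h(t)}\|_2^2 \leq K_3\, h
\end{align*}
for some finite constant $K_3>0$ depending on $T$, $K$, $G(\infty)$ and $K_0$ but not on $h$. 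Combining the two bounds with the elementary inequality $(a+b)^2\leq 2(a^2+b^2)$ gives
\begin{align*}
\|\sup_{0\leq t\leq T}\abs{Z(t)-Z_\epsilon^h(t)}\|_2^2 \leq 2\bigl(C_\epsilon G^2(\epsilon)+K_3 h\bigr),
\end{align*}
and taking square roots yields the claimed estimate.

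The one genuine step is justifying the $O(h^{1/2})$ Euler rate for $Z_\epsilon(t)$, and this is the step I would expect to require the most care: one reproduces the standard argument by writing $Z_\epsilon(t)-Z_\epsilon^h(t)$ as a sum of integrals of $(f(s,Z_\epsilon(s))-f(t_i,Z_\epsilon^h(t_i)))$ pieces (splitting into the drift, the two Brownian parts and the compensated Poisson part), applying Doob's maximal inequality, It\^o and compensated-Poisson isometries, Cauchy--Schwarz and linear growth to produce an $O(h)$ local contribution, then Lipschitz continuity and Gr\"onwall's inequality. This is entirely analogous to the computations already carried out in the proofs of Lemma\,\ref{lemma:error_convergence} and Proposition\,\ref{prop:approx_nig-levy_process_convergence_rate}, with the added (but standard) ingredient that for piecewise-constant approximants one gets a term like $\int_0^T E|Z_\epsilon(s)-Z_\epsilon(\underline s)|^2ds=O(h)$ by another direct moment estimate. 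Since this is textbook material under Assumption\,\ref{ass:assumption1}, citing \cite{platen10} and absorbing all constants into $K_3$ is the cleanest route.
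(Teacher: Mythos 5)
Your proposal is correct and follows essentially the same route as the paper's proof: split $Z(t)-Z_\epsilon^h(t)$ into the approximation error controlled by Proposition\,\ref{prop:approx_nig-levy_process_convergence_rate} and the Euler discretization error of $Z_\epsilon(t)$, invoke the standard strong convergence result for jump diffusions with compound Poisson jumps from \cite{platen10} (the paper cites Corollary\,6.4.3 there) for the latter, and combine via $\abs{f+g}^2\leq 2(\abs{f}^2+\abs{g}^2)$. The only cosmetic difference is that the paper applies the elementary inequality directly to the squared norm rather than first using the triangle inequality, which yields the identical bound.
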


\begin{proof}
We assume that $\tilde{a}(t,x)$ (and therefore also $\tilde{a}_\epsilon(t,x)$), see Eq.\,\eqref{eq:multidimensional_sde_with_small_jumps}, satisfy the Lipschitz and linear growth conditions in Eq.\,\eqref{eq:lipschitz_linear_growth_conditions}. Then, since all conditions in Assumption\,\ref{ass:assumption1} are satisfied, Corollary\,6.4.3 in \cite{platen10} (see also \cite{gardon04}) gives
\begin{align}
    \label{eq:euler_convergence_rate_Z_epsilon}
    \|\sup_{0\leq t \leq T}\abs{Z_\epsilon(t)-Z_\epsilon^{h}(t)}\|_2^2 \leq K_3 h,
\end{align}
where $K_3$ is a finite positive constant independent of $h$. Note that Corollary\,6.4.3 in \cite{platen10} holds when the jump part of $Z_\epsilon(t)$ is a compound Poisson process, which is the case for $Z(t)$ and $Z_\epsilon(t)$ when the drift is $\tilde{a}(t,x)$ and $\tilde{a}_\epsilon(t,x)$ respectively. Using the identity $\abs{f+g}^p\leq 2^{p-1}(\abs{f}^p + \abs{g}^p)$, as in the proof of Lemma\,\ref{lemma:error_convergence}, we find
\begin{align*}
    \|\sup_{0\leq t \leq T}\abs{Z(t)-Z_\epsilon^{h}(t)}\|_2^2 &= \|\sup_{0\leq t \leq T}\abs{Z(t) - Z_\epsilon(t) + Z_\epsilon(t) - Z_\epsilon^{h}(t)}\|_2^2 \\
    &\leq 2\|\sup_{0\leq t \leq T}\abs{Z(t)-Z_\epsilon(t)}\|_2^2 + 2\|\sup_{0\leq t \leq T}\abs{Z_\epsilon(t)-Z_\epsilon^{h}(t)}\|_2^2.
\end{align*}
By Proposition\,\ref{prop:approx_nig-levy_process_convergence_rate} and Eq.\,\eqref{eq:euler_convergence_rate_Z_epsilon} the proof is complete.
\end{proof}
Note that the Euler scheme $Z_\epsilon^h(t)$ converges to $Z(t)$ as $\epsilon\to 0$ and $h\to 0$. \\

It is straight forward to show that the results in Lemma\,\ref{lemma:error_convergence}, Proposition\,\ref{prop:approx_nig-levy_process_convergence_rate} and Proposition\,\ref{prop:euler_scheme_of_approximated_sde_convergence_rate} hold for MCARMA processes. A special case is considered in Section\,\ref{subsect:nig_levy_process}. \\ 


\section{\label{sect:the_mcar_model}The NIG-L\'{e}vy-driven MCAR process: A case study}


The remaining of this work focus on a case study for an NIG-L{\'e}vy-driven MCAR process, as an example of how to apply the multivariate ARMA/CARMA transformation relation in model estimation. This MCAR modelling framework is fit to a two-dimensional system of stratospheric temperature and wind variables. More specifically, the dynamical system of dependent variables that is to be considered is one of stratospheric temperature and U wind. As opposed to temperature, wind constitutes a direction in addition to its magnitude. The wind direction is given by two wind components, that is U wind representing the west to east flow, and V wind representing the south to north flow. As explained in, e.g., \cite{karpechko16} and \cite{hitchcock14}, it is particularly interesting to study the U wind component of the Northern hemisphere, as extreme events such as sudden stratospheric warmings has the potential to influence surface weather to a great extent. \\

The initial stratospheric temperature and U wind data are retrieved as European Centre for Medium-Range Weather Forecasts (ECMWF) ERA-Interim atmospheric reanalysis model products (\cite{ecmwf_stratospheric_temp} and \cite{dee2011}), see Appendix\,\ref{app:6}. The data analysed in this work are reprocessed as daily circumpolar mean stratospheric temperature and U wind at $60^{\circ}$N and $10$\;hPa altitude, from 1 January 1979 to 31 December 2018, see Table\,\ref{tab:dataset_specs}. See \cite{eggen2021} for more information about data preparation. \\

In Section\,\ref{subsect:nig_levy_process}, we define the model representation and show that the convergence results in Section\,\ref{subsect:convergence_rates} hold, meaning that the multivariate ARMA/CARMA transformation relation holds for this special case. Further, in Section\,\ref{subsect:distributional_properties}, some statistical considerations of the NIG-L{\'e}vy process are assessed to properly express the error (moving average) coefficients of the NIG-L{\'e}vy-driven MCAR process. Finally, in Section\,\ref{subsect:example},
an extended MCAR process able to describe additive seasonality and heteroscedasticity in dynamical systems is defined. Then, based on the multivariate ARMA/CARMA transformation relation, explicit formulas for estimated autoregressive and error coefficients are derived for a given set of model parameters.
The results are used to fit a two-dimensional NIG-L{\'e}vy-driven MCAR model to stratospheric temperature and U wind data. \\


\subsection{\label{subsect:nig_levy_process}Some considerations on the NIG-L{\'e}vy-driven MCAR(MA) process}


In this section we explicitly define the MCAR process as a state space representation, and show that the convergence results in Section\,\ref{subsect:convergence_rates} hold for an NIG-L{\'e}vy-driven MCAR(MA) process. This means that the multivariate ARMA/CARMA transformation relation holds for the NIG-L{\'e}vy-driven MCAR model. \\

By the state space representation of L{\'e}vy-driven MCARMA processes in Section\,\ref{subsec:varma_mcarma_definition}, the corresponding MCAR process is given by 
\begin{align}
    \label{eq:mcar-model_initial}
    \begin{cases}
    \bs{Y}(t) = \bs{X}_1(t) \\
    d\bs{X}_{l}(t) = \mathbb{1}_d\bs{X}_{l+1}(t)dt \\
    d\bs{X}_{p}(t) = [-A_p\cdots -A_1]\bs{X}(t)dt + B_0 d\bs{L}(t),
    \end{cases}
\end{align}
see Eq.\,\eqref{eq:the_mcarma_model}, \eqref{eq:sde_recursive_equations} and \eqref{eq:sde_lag_dependence_structure}. The multivariate ARMA/CARMA transformation relation in Theorem\,\ref{thm:the_transformation_relation} is considerably simplified for MCAR processes, as the shifted L\'{e}vy-terms disappear. For any practical application, we assume the simpler MCAR($p$) process to be a useful approximation of the MCARMA($p,q$) process. That is, as stated in \cite{gomez19}, for $p$ large enough, the VAR($p$) model approaches a VARMA($p,q$) model. A rigorous proof of this statement for MCAR/MCARMA processes is a topic for further research. The argument for assuming this to hold for MCAR/MCARMA processes as well, is that the direct transformation relation between VARMA/MCARMA processes (see Theorem\,\ref{thm:the_transformation_relation}) is derived by introducing no structural changes going from the discretized MCARMA process to the VARMA process. In particular, we will see in Section\,\ref{subsect:example} that an MCAR model explains well the two dimensional dynamical system of stratospheric temperature and U wind. \\

We will continue to specify the MCAR process in Eq.\,\eqref{eq:mcar-model_initial} further. That is, we will consider the framework when driven by an NIG-L{\'e}vy process. As shown in \cite{barndorff97-2}, the generating triplet of an NIG-L{\'e}vy process is $(\bs{\alpha},0,\nu)$. As the property $\int_{\abs{\bs{z}}\leq 1}\abs{\bs{z}}\nu(dz)=\infty$ holds for this process the small jumps dominate, and the results in Section\,\ref{subsect:convergence_rates} apply. By properties of the L{\'e}vy measure, $\nu((-\epsilon,\epsilon)^c)<\infty$ for all $0<\epsilon \leq 1$, the NIG-L{\'e}vy process might be represented as (see Eq.\,\eqref{eq:ito-levy_decomposition})
\begin{align}
    \label{eq:the_nig-levy_process}
    \bs{L}(t) = \hat{\bs{\alpha}}t + \int_{\abs{\bs{z}}<\epsilon}\bs{z}\tilde{N}(t,d\bs{z}) + \int_{\abs{\bs{z}}\geq\epsilon}\bs{z}N(t,d\bs{z}),
\end{align}
where $\hat{\bs{\alpha}}=\left(\bs{\alpha} - \int_{\epsilon\leq \abs{\bs{z}}< 1}\bs{z}\nu(d\bs{z})\right)$. Further, the MCARMA state space model in Eq.\,\eqref{eq:the_mcarma_model} written terms of the NIG-L{\'e}vy process is given by
\begin{align}
    \label{eq:the_nig-levy_mcarma_process}
    d\bs{X}(t) = \left(A\bs{X}(t) + \beta \hat{\bs{\alpha}}\right)dt + \beta\int_{\abs{\bs{z}}< \epsilon}\bs{z}\tilde{N}(dt,d\bs{z}) + \beta\int_{\abs{\bs{z}}\geq \epsilon}\bs{z}N(dt,d\bs{z}),
\end{align}
which is referred to as the NIG-L{\'e}vy-driven MCARMA process. By Proposition\,\ref{prop:approx_nig-levy_process_convergence_rate} and Proposition\,\ref{prop:euler_scheme_of_approximated_sde_convergence_rate}, it is straight forward to see that $\bs{X}_\epsilon(t)$ (corresponding to $Z_\epsilon(t)$ in Eq.\,\eqref{eq:multidimensional_sde_with_small_jumps_approximation}) converges to $\bs{X}(t)$ when the small jumps approach zero, and that the corresponding Euler scheme $\bs{X}_\epsilon^h(t)$ converges to $\bs{X}(t)$ as $h\to 0$, when all relevant coefficients satisfy Assumption\,\ref{ass:assumption1}. That is, the transformation relation in Theorem\,\ref{thm:the_transformation_relation} is valid for the NIG-L{\'e}vy-driven MCARMA process, as consequently for the NIG-L{\'e}vy-driven MCAR process. \\


\subsection{\label{subsect:distributional_properties}Distributional properties of the NIG-L{\'e}vy process}


The distributional properties of linear combinations of NIG-L{\'e}vy processes are considered in this section. The results will be used in Section\,\ref{subsect:example} to estimate an NIG-L{\'e}vy-driven MCAR model describing stratospheric temperature and U wind dynamics. Note that an extended analysis including lags of the driving NIG-L{\'e}vy process would have to be performed in order to consider estimation of a corresponding MCARMA model. \\

By definition of the MCARMA process in Eq.\,\eqref{eq:the_mcarma_model}, the moving average matrices $\beta_{p-1},\ldots ,\beta_1$ of the MCAR process are given by $\mathbb{0}_m$. That is, the transformation relation in Theorem\,\ref{thm:the_transformation_relation} gives a component-wise stochastic part
\begin{align}
    \label{eq:stoch_part_of_mcar_model}
    \Delta\mathcal{E}_k(t)\triangleq stoch(x_k(t+p)) = \sum_{r=1}^{m}\beta_{r,k}\Delta L_r(t),
\end{align}
where $\beta_{i_r,k}\triangleq \beta_{(k-1)m+i_r}^{(0)}$, and the discretization step size is assumed to be $h=1$ (day). Notice that this indicates that the driving process of the multivariate dynamical system (the MCAR process) is given by $d\bs{\mathcal{E}}(t)=\beta d\bs{L}(t)$ ($\beta\triangleq \beta_p$), with $\bs{\mathcal{E}}(t)$ being a $d$-dimensional L{\'e}vy process. \\

Let the $1$-day increment of each component of the multidimensional L\'{e}vy process, $\bs{L}(t)$, be distributed as univariate NIG random variables, meaning
\begin{align}
    \label{eq:levy_is_nig_distributed}
    \Delta L_{i_r}(t) \overset{d}{\simeq} L_{i_r}(1) \overset{d}{\simeq} NIG(a_{i_r},b_{i_r},\delta_{i_r},\mu_{i_r}),
\end{align}
where $a_{i_r}$, $b_{i_r}$, $\delta_{i_r}$ and $\mu_{i_r}$ is the tail heaviness, asymmetry parameter, scale parameter and distributional location respectively. The NIG distribution is closed under convolution and affine transformations in the following sense (see, e.g., \cite{barndorff-nielsen_book_01}): if $\Delta L_{i_r}$, for $1\leq {i_r} \leq m$, are independent random variables that are NIG distributed such that $a_{i_r}=a$ and $b_{i_r} = b$ for all ${i_r}$, then 
\begin{align*}
    \sum_{r=1}^{m}\Delta L_r(t)\overset{d}{\simeq} NIG\left(a ,b ,\sum_{r=1}^{m}\delta_r, \sum_{r=1}^{m}\mu_r\right)
\end{align*}    
and
\begin{align*}
\beta_{i_r,k}\Delta L_{i_r}(t)\overset{d}{\simeq} NIG\left(\frac{a_{i_r}}{\abs{\beta_{{i_r},k}}},\frac{b_{i_r}}{\beta_{{i_r},k}},\abs{\beta_{{i_r},k}}\delta_{{i_r}},\beta_{{i_r},k}\mu_{{i_r}}\right).
\end{align*}
By these properties, the component-wise distribution generated by the L{\'e}vy process, $\bs{\mathcal{E}}(t)$, is given by 
\begin{align}
 \label{eq:nig_distribution_of_sum}
     \mathcal{E}_k(t) \overset{d}{\simeq} NIG\left(a,b,\sum_{r=1}^{m}\abs{\beta_{r,k}}\delta_{r},\sum_{r=1}^{m}\beta_{r,k}\mu_{r}\right),
\end{align}
with restrictions $a = a_1/\abs{\beta_{1,k}} = \ldots = a_m/\abs{\beta_{m,k}}$ and $b = b_1/\beta_{1,k} = \ldots = b_m/\beta_{m,k}$, for all $1\leq k \leq d$. Note that each independent NIG-L\'{e}vy process component, $dL_{i_r}(t)$, might generate random variables from NIG distributions with distinct parameters $a_i$ and $b_i$, where the components of $\beta$ have to be restricted for Eq.\,\eqref{eq:nig_distribution_of_sum} to hold. \\


\subsection{\label{subsect:example}A two-dimensional MCAR($4$) case study}


Estimation of a two-dimensional MCAR($4$) model driven by a two-dimensional NIG-L{\'e}vy process is performed in this section. First, we will modify the MCAR process, defined in Eq.\,\eqref{eq:mcar-model_initial}, such that data with inherent seasonal behaviour can be represented by the model. Then the transformation relation in Theorem\,\ref{thm:the_transformation_relation} is used to explicitly state the autoregressive MCAR model coefficients as a function of autoregressive VAR model coefficients, and restrictions are set on the NIG distributions representing model residuals, such that formulas for the MCAR model coefficients of the stochastic part can be derived. Finally, empirical estimation of the modified MCAR model is exemplified using daily circumpolar mean stratospheric temperature and U wind data, see Table\,\ref{tab:dataset_specs} for data specifications. \\

In \cite{eggen2021}, daily circumpolar mean stratospheric temperature at $60^{\circ}$\;N and $10$\;hPa altitude were shown to follow a CAR($4$) process on the form
\begin{align}
    \label{eq:car-model}
    \begin{cases}
    S(t) = \Lambda(t) + X_1(t)\\
    d\bs{X}(t) = A_p\bs{X}(t)dt + \bs{e}_p\sigma(t-)dL(t),
    \end{cases}
\end{align}
with statistical significance. Here, $\Lambda(t)\in\R$ is a deterministic, bounded and continuously differentiable seasonality function, $\sigma(t)$ is a deterministic and c{\`a}dl{\`a}g volatility function, and the stochastic process $\bs{X}(t)$ in $\R^p$ represents deseasonalized stratospheric temperature and its autoregressive nature. As indicated, $\bs{X}(t)$
is given by a multidimensional non-Gaussian OU process with time dependent volatility, where $A_p\in \R^p$, $\bs{e}_p$ is the unit vector in $\R^p$, and the driving L{\'e}vy process takes values in $\R$. This model is also used for modelling of temperature and wind in the troposphere. For more details about this model, see, e.g., \cite{eggen2021}, \cite{benth2008}, \cite{benth13}. \\

To represent weather variables independently with an one-dimensional model is a significant simplification, as weather is a complex and non-linear system of dependent variables. The MCAR process allows the improvement of representing several weather variables as a linear cross-correlated system in an autoregressive manner. Using a similar methodology as in \cite{eggen2021}, we see that daily zonal mean stratospheric U wind, at $10$\;hPa altitude, follows a CAR($4$) process with statistical significance, just as stratospheric temperature. This motivates the case study of fitting a two-dimensional MCAR($4$) process to stratospheric temperature and U wind. Note that the exercise of concluding an optimal value of the lag parameter, $p$, for the MCAR process is not part of this work. The choice of using $p=4$ is made because the marginals are well modelled in that case. \\ 

Inspired by the CAR process in Eq.\,\eqref{eq:car-model}, we redefine the MCAR model in Eq.\,\eqref{eq:mcar-model_initial} as 
\begin{align}
    \label{eq:mcar-model_updated}
    \begin{cases}
    \bs{Y}(t) = \bs{\Lambda}(t) + \bs{X}_1(t) \\
    d\bs{X}_{l}(t) = \mathbb{1}_d\bs{X}_{l+1}(t)dt \\
    d\bs{X}_{p}(t) = [-A_p\cdots -A_1]\bs{X}(t)dt + \bs{\sigma}(t-)\circ B_0 d\bs{L}(t),
    \end{cases}
\end{align}
where $\bs{\Lambda}(t)\in{\R^d}$ is a deterministic and continuous seasonality function, $\bs{\sigma}(t)\in{\R^d}$ is a deterministic and c{\`a}dl{\`a}g volatility function, and $\circ$ represents the Hadamard product. This allows modelling data with seasonal and heteroscedastic behaviour. Note that this extension of the MCAR model framework still might be represented as in Eq.\,\eqref{eq:the_nig-levy_mcarma_process}, meaning that the transformation relation in Theorem\,\ref{thm:the_transformation_relation} is valid as long as the relevant model coefficients satisfy Assumption\,\ref{ass:assumption1}. Considerations of possible theoretical challenges regarding the spectral method of deriving the MCARMA model, see \cite{marquardt07}, is beyond the scope of this paper. In what follows, the two-dimensional NIG-L{\'e}vy-driven MCAR($4$) process in Eq.\,\eqref{eq:mcar-model_updated} is fitted to stratospheric temperature and U wind data. Specifications of the data are given in Table\,\ref{tab:dataset_specs}. \\


\subsubsection{\label{subsubsect:parameter_est}The model coefficients}


In this section, we show how autoregressive MCAR model coefficients are found using the multivariate ARMA/CARMA transformation relation in Theorem\,\ref{thm:the_transformation_relation}. Further, the error coefficients are computed through statistical properties of the NIG distribution. This example is easily extended to other dimension and lag parameters. The method of deriving error coefficients only holds for the case of two driving NIG-L{\'e}vy processes. \\

The MCAR model parameters have to be given initially, and we choose parameters as follows. We are considering a dynamical system of two variables, and so the number of model dimensions is $d=2$. Further, it is reasonable to assume that each dimension in the MCAR model is source of a random process, giving rise to the idiosyncratic model error of each dimension separately. Therefore, we set $m=d=2$. As seen in \cite{gomez19}, it is more intricate to find an optimal lag parameter for VARMA models than for the one-dimensional analogue, especially when the driving process is not normal. As this case study is meant as a demonstration of how to apply the transformations in Theorem\,\ref{thm:the_transformation_relation} rather than a detailed assessment of geophysical parameters, we are not concluding an optimal value for $p$ in this work. As argued in the introduction of Section\,\ref{subsect:example}, we set $p=4$. \\

Insert the parameters $p=4$, $d=2$ and $m=d$ into the multivariate ARMA/CARMA transformation relation formula in Theorem\,\ref{thm:the_transformation_relation} (remember that we assume $h=1$ and $q=0$), and rearrange to find the compressed expression
\begin{align}
    \label{eq:MCAR/VAR_transformation_example}
    \bs{x}(t+4) = \Phi_1\bs{x}(t+3) + \Phi_2\bs{x}(t+2) + \Phi_3\bs{x}(t+1) + \Phi_4\bs{x}(t) + \bs{\Phi}(\bs{x}) + \beta\Delta\bs{L}(t),
\end{align}
where $\bs{x}(\cdot) = [x_1(\cdot),x_2(\cdot)]$, $\Delta\bs{L}(t)=[\Delta L_1(t),\Delta L_2(t)]$, and $\Phi_i,\beta\in\R^{2\times 2}$ and $\bs{\Phi}(\bs{x})\in\R^{2}$ are given by 
\begin{align*}
    \Phi_1 = \begin{bmatrix}
    -\alpha_{1,1}^{(1)} \hspace{2mm}&\hspace{2mm} -\alpha_{1,2}^{(1)} \\
    -\alpha_{2,1}^{(1)} \hspace{2mm}&\hspace{2mm} -\alpha_{2,2}^{(1)}
    \end{bmatrix},
    \Phi_2 = \begin{bmatrix}
    -\alpha^{(2)}_{1,1} + 3\alpha_{1,1}^{(1)} \hspace{2mm}&\hspace{2mm} -\alpha^{(2)}_{1,2} + 3\alpha_{1,2}^{(1)} \\
    -\alpha^{(2)}_{2,1} + 3\alpha_{2,1}^{(1)} \hspace{2mm}&\hspace{2mm} -\alpha^{(2)}_{2,2} + 3\alpha_{2,2}^{(1)}
    \end{bmatrix},
\end{align*}
\begin{align*}
    \Phi_3 = \begin{bmatrix}
    -\alpha^{(3)}_{1,1} + 2\alpha_{1,1}^{(2)} - 3\alpha_{1,1}^{(1)} \hspace{2mm}&\hspace{2mm} -\alpha^{(3)}_{1,2} + 2\alpha_{1,2}^{(2)} - 3\alpha_{1,2}^{(1)} \\
    -\alpha^{(3)}_{2,1} + 2\alpha_{2,1}^{(2)} - 3\alpha_{2,1}^{(1)} \hspace{2mm}&\hspace{2mm} -\alpha^{(3)}_{2,2} + 2\alpha_{2,2}^{(2)} - 3\alpha_{2,2}^{(1)}
    \end{bmatrix},
\end{align*}
\begin{align*}
    \Phi_4 = \begin{bmatrix}
    -\alpha^{(4)}_{1,1} + \alpha_{1,1}^{(3)} - \alpha_{1,1}^{(2)} + \alpha^{(1)}_{1,1} \hspace{2mm}&\hspace{2mm} -\alpha^{(4)}_{1,2} + \alpha_{1,2}^{(3)} - \alpha_{1,2}^{(2)} + \alpha^{(1)}_{1,2} \\
    -\alpha^{(4)}_{2,1} + \alpha_{2,1}^{(3)} - \alpha_{2,1}^{(2)} + \alpha^{(1)}_{2,1} \hspace{2mm}&\hspace{2mm} -\alpha^{(4)}_{2,2} + \alpha_{2,2}^{(3)} - \alpha_{2,2}^{(2)} + \alpha^{(1)}_{2,2}
    \end{bmatrix},
\end{align*}
\begin{align*}
    \bs{\Phi}(\bs{x}) = \begin{bmatrix}
    \sum_{i=0}^{3}k(i)x_1(t+i) \\
    \sum_{i=0}^{3}k(i)x_2(t+i)
    \end{bmatrix}\quad\text{and}\quad
    \beta = \begin{bmatrix}
    \beta_{1,1} & \beta_{1,2}\\
    \beta_{2,1} & \beta_{2,2}
    \end{bmatrix},
\end{align*}
where we defined $\alpha_{k,i_s}^{(j)}\triangleq \alpha_{(k-1)d+i_s}^{(p-l+1)}$ and $\beta_{k,i_r}\triangleq \beta_{(k-1)m+i_r}^{(0)}$, and $[k(0),k(1),k(2),k(3)]=[4,-6,4,-1]$. \\

Compare the formula in Eq.\,\eqref{eq:MCAR/VAR_transformation_example} with a two-dimensional VAR($4$) model, generally defined in Eq.\,\eqref{eq:varma_matrix_polynomials}. That is, 
\begin{align}
    \label{eq:var_4_process_example}
    \bs{x}(t+4) = \phi_{1}\bs{x}(t+3) + \phi_{2}\bs{x}(t+2) + \phi_{3}\bs{x}(t+1) + \phi_{4}\bs{x}(t) + \mathbb{1}_2\bs{\mathcal{E}}(t),
\end{align}
where $\bs{\mathcal{E}}(t)\in\R^d$ represents a sequence of serially uncorrelated i.i.d. random vectors and
\begin{align*}
    \phi_j = \begin{bmatrix}
    \phi^{(j)}_{11} & \phi^{(j)}_{12} \\
    \phi^{(j)}_{21} & \phi^{(j)}_{22}
    \end{bmatrix}.
\end{align*}
With the discretized MCAR process and the VAR process expressed as in Eq.\,\eqref{eq:MCAR/VAR_transformation_example} and \eqref{eq:var_4_process_example} respectively, it is straight forward to compute the MCAR model coefficients, $\alpha_{k,i_s}^{(j)}$, as a function of the VAR model coefficients, $\phi^{j}_{\cdot \cdot}$. That is, still with attention on the autoregressive model coefficients, we have for example that 
\begin{align}
    \label{eq:solve_for_alpha}
    \begin{bmatrix}
    \alpha_{1,1}^{(1)} & \alpha_{1,2}^{(1)} \\
    \alpha_{2,1}^{(1)} & \alpha_{2,2}^{(1)}
    \end{bmatrix} = 
    \begin{bmatrix}
    -\phi_{11}^{(1)} - 1 & -\phi_{12}^{(1)} \\
    -\phi_{21}^{(1)} & -\phi_{22}^{(1)} - 1
    \end{bmatrix}.
\end{align}
Continuing with similar approach for $\Phi_2$, $\Phi_3$, and then $\Phi_4$, with some substitutions, we obtain all $16$ autoregressive MCAR model coefficients as a function of the fitted autoregressive VAR model coefficients. The results are given in Appendix\,\ref{app:5}. \\

When fitting a VAR model to data, the distributions of components of the random vector $\bs{\mathcal{E}}(t)$ are easily obtained by fitting an NIG distribution to the model residuals. This is done component-wise by the same methodology as in \cite{eggen2021}. Compare the VAR and MCAR representations in Eq.\,\eqref{eq:MCAR/VAR_transformation_example} and \eqref{eq:var_4_process_example}, respectively, to see that the error coefficients of the MCAR model are given by
\begin{align}
    \label{eq:beta_linear_system}
    \mathcal{E}_k(t)= \beta_{k,1}\Delta L_1(t) + \beta_{k,2}\Delta L_2(t),
\end{align}
where we use the notation $\bs{\mathcal{E}}(t)=[\mathcal{E}_1(t),\mathcal{E}_2(t)]$, and where $\Delta L_1(t)$ and $\Delta L_2(t)$ represents the idiosyncratic error of stratospheric temperature and U wind respectively. Remember that an MCAR model with $m=2$ is considered, and that $\mathcal{E}_k(t)$ and $\Delta L_{i_r}(t)$ ($k,i_r\in\{1,2\}$) are assumed to be NIG distributed random variables. Note that $\beta$ is simply the identity matrix $\mathbb{1}_2$ if the residual datasets represented by $\mathcal{E}(t)$ are independent, as the fitted distributions are given as $\hat{\mathcal{E}}_1(t)\simeq \Delta L_1(t)$ and $\hat{\mathcal{E}}_2(t)\simeq \Delta L_2(t)$ in that case. If the residual datasets are dependent, the question is more intricate. \\

As we will see, to obtain a solution of the linear system in Eq.\,\eqref{eq:beta_linear_system} (if it exists) for $k\in\{1,2\}$, the idiosyncratic error distributions generated by $\Delta L_1(t)$ and $\Delta L_2(t)$ have to be restricted. We assume that the idiosyncratic error distributions are given as in Eq.\,\eqref{eq:levy_is_nig_distributed}, and that the residual datasets for stratospheric temperature and U wind are NIG-distributed as $\mathcal{E}_k(t)\overset{d}{\simeq} NIG(a_k^{\mathcal{E}},b^{\mathcal{E}}_k,\delta^{\mathcal{E}}_k,\mu^{\mathcal{E}}_k)$. The distribution parameters of $\mathcal{E}_k(t)$ are naturally restricted by the explicitly given parameters in Eq.\,\eqref{eq:nig_distribution_of_sum}. \\

Denote the two-dimensional NIG-L{\'e}vy process driving the two-dimensional MCAR($4$) model as $\bs{L}(t)=[L_1(t),L_2(t)]$. Further, assume that the elements of $\bs{L}(t)$ are independent random variables with $E[\bs{L}(t)]=[0,0]$ and $\text{Var}(\bs{L}(t)) = [1,1]$. These assumptions are reasonable because we are considering the MCAR model in Eq.\,\eqref{eq:mcar-model_updated}, meaning that the model is shifted and scaled by the seasonally varying functions $\bs{\Lambda}(t)$ and $\bs{\sigma}(t)$, respectively. The goal from here is to estimate the parameters in $\beta$ from NIG-distributions fitted to the residual datasets, which is represented by $\bs{\mathcal{E}}(t)$. The above assumptions give $\text{Cov}(\bs{L}(t))=\mathbb{1}_2$, meaning that the covariance of $\bs{\mathcal{E}}(t)$ is given as
\begin{align}
    \label{eq:cov_of_epsilon}
    \Sigma \triangleq\text{Cov}(\bs{\mathcal{E}}(t)) = \text{Cov}(\beta\Delta \bs{L}(t)) = \beta\text{Cov}(\Delta \bs{L}(t))\beta' = \beta\beta'=\begin{bmatrix}
\beta_{1,1}^2 + \beta_{1,2}^2  & \beta_{1,1}\beta_{2,1} + \beta_{1,2}\beta_{2,2} \\
 & \beta_{2,1}^2 + \beta_{2,2}^2 
    \end{bmatrix}.
\end{align}
This formula gives a direct relation between $\beta$ and the empirically computed covariance matrix, $\hat{\Sigma}$, of model residuals. As the covariance matrix is symmetric (see Eq.\,\eqref{eq:cov_of_epsilon}), an additional restriction is needed to derive explicit formulas for the components of $\beta$. The additional restriction is set on the idiosyncratic error distributions generated by $\bs{L}(t)$. The goal is to set a restriction that is simple, but still leave the model as flexible as possible. Consider the following assessment of possible restrictions, and how they affect the NIG distribution parameters of $\bs{\mathcal{E}}(t)$:
\begin{enumerate}
    \item Assuming $a_{i_r} = C_a$, $i_r\in\{1,2\}$, for a constant $C_a\geq 0$ gives $\beta_{k,i_r}=C_a/\hat{a}_k^\mathcal{E}$ for all $i_r$, where $\hat{a}_k^\mathcal{E}$ is the estimated tail heaviness for the distribution of $\mathcal{E}_k(t)$. This is not a beneficial restriction as it forces $\beta_{k,1}=\beta_{k,2}$ for each dimension $k$.
    \item A similar argument as in point 1. holds for the idiosyncratic error distribution asymmetry parameters $b_{i_r}$.
    \item The assumption $E[\bs{L}(t)] = [0,0]$ gives $\mu_{i_r} = -\delta_{i_r}b_{i_r}/\sqrt{a_{i_r}^2 - b_{i_r}^2}$, $i_r\in\{1,2\}$, by definition. Therefore, the restriction $\mu_{i_r}=0$ forces $b_{i_r}$ to be zero and vice versa, since $\delta_{i_r} > 0$ by definition. This leads to strict conditions on the idiosyncratic error distributions. 
\end{enumerate}
The restrictions listed above increase the risk of non-existing statistical significant NIG distributions for the elements of $\bs{\mathcal{E}}(t)$. Based on this, there are only two reasonable (and simple enough) choices of restrictions to the idiosyncratic error distributions. Either choose $\mu_{i_r}= C_\mu$ for a constant $\abs{C_\mu}>0$ and $i_r\in\{1,2\}$, or $\delta_{i_r} = C_\delta$ for a constant $C_\delta>0$ and $i_r\in\{1,2\}$. That is, either restrict the scale parameters or the location parameters of the idiosyncratic error distributions to be equal. \\

In this study, we continue with the restriction $\delta_{i_r} = C_\delta$ for a constant $C_\delta>0$ and $i_r\in\{1,2\}$. Thus, by Eq.\,\eqref{eq:nig_distribution_of_sum} we have that
\begin{align}
\label{eq:parameters_delta_restrictions}
    \begin{cases}
    \delta_1^{\mathcal{E}} = \abs{\beta_{1,1}}C_{\delta} + \abs{\beta_{1,2}}C_{\delta}  \\
    \delta_2^{\mathcal{E}} = \abs{\beta_{2,1}}C_{\delta} + \abs{\beta_{2,2}}C_{\delta}
    \end{cases}.
\end{align}
To find the two-dimensional MCAR($4$) model coefficients in $\beta$, the system of equations in Eq.\,\eqref{eq:cov_of_epsilon} and \eqref{eq:parameters_delta_restrictions} has to be solved. Notice that the system of equations are independent of the lag parameter, $p$. \\

Solving the system of equations in Eq.\,\eqref{eq:cov_of_epsilon} and \eqref{eq:parameters_delta_restrictions} gives
\begin{align}
\label{eq:eq_to_find_c_delta_general}
    \pm\sqrt{\Sigma_{11} - \beta_{1,2}^2}\beta_{2,1}  \pm\sqrt{\Sigma_{22} - \beta_{2,1}^2}\beta_{1,2} = \Sigma_{12},
\end{align}
where
\begin{align*}
\beta_{1,2}=\frac{1}{2C_\delta}\left(\pm\delta_1^{\mathcal{E}} \pm \sqrt{2\Sigma_{11}C_\delta^2 - (\delta_1^{\mathcal{E}})^2}\right)\quad\text{and}\quad \beta_{2,1} \in \frac{1}{2C_\delta}\left(\pm\delta_2^{\mathcal{E}} \pm \sqrt{2\Sigma_{22}C_\delta^2 - (\delta_2^{\mathcal{E}})^2}\right),
\end{align*}
for all four possible combinations of signs in each case. For convenience of the reader, the derivations are included in Appendix\,\ref{app:4}. \\

To conclude, all components of the coefficient matrix $\beta$ are dependent on the idiosyncratic error distribution scale parameter, $C_\delta$. Any real-valued solution of components of $\beta$ are allowed as long as Eq.\,\eqref{eq:eq_to_find_c_delta_general} is satisfied, including the restrictions 
\begin{align}
    \label{eq:beta_0_restrictions}
    1)\;C_\delta > 0;\quad 2)\;C_\delta \geq \delta_1^{\mathcal{E}}/\sqrt{2\Sigma_{11}}, C_\delta \geq \delta_2^{\mathcal{E}}/\sqrt{2\Sigma_{22}};\quad 3)\; \Sigma_{11}\geq \beta_{2,1}^2, \Sigma_{22}\geq \beta_{1,2}^2.
\end{align}


\subsubsection{\label{subsubsect:data_and_var_fit}Fit model to stratospheric temperature and U wind data}


Empirical estimation of coefficients of a two-dimensional MCAR($4$) model driven by a two-dimensional NIG-L{\'e}vy process is performed using results from Section\,\ref{subsubsect:parameter_est}. The model is fit to daily zonal mean stratospheric temperature and U wind data. See Table\,\ref{tab:dataset_specs} for data specifications, and note that the datasets are prepared as presented in \cite{eggen2021}. \\

\begin{table}[hbt!]
\centering
\caption{Specifications of stratospheric temperature and U wind datasets. The specifications for the two datasets are similar, except units. Initial datasets are retrieved as ECMWF ERA-Interim reanalysis model products. \label{tab:dataset_specs}}
\begin{tabular}{cccccc}
\toprule
Date & Grid & Pressure level & Time & Area & Unit  \\
\midrule
\makecell{1 January 1979 to\\31 December 2018} \hspace{1mm} & \hspace{1mm} $0.5^{\circ}$ \hspace{1mm} & \hspace{1mm} $10$\;hPa \hspace{1mm} & \hspace{1mm} \makecell{00:00, 06:00,\\12:00, 18:00} \hspace{1mm} & \hspace{1mm} \makecell{$60^{\circ}$N and\\$[-180^{\circ}\text{E},180^{\circ}\text{E})$} \hspace{1mm} & \hspace{1mm} \makecell{Temp.: Kelvin,\\U wind: m/s}\\
\bottomrule  
\end{tabular}
\end{table}

As argued in the introduction of Section\,\ref{subsect:example}, it is reasonable to assume that the dynamical system of stratospheric temperature and U wind follow a VAR/MCAR model. In the following, we will work with the two-dimensional state space model in Eq.\,\eqref{eq:mcar-model_updated}. That is, the dynamical system is given by $\bs{Y}(t)=[Y_1(t),Y_2(t)]$, where $Y_1(t)$ and $Y_2(t)$ represents temperature and U wind respectively. As explained in Section\,\ref{subsubsect:parameter_est}, the parameter values of the MCAR model are initially given as $d=2$, $m=d=2$ and $p=4$ (remember that $q=0$ in a MCAR model). We use a similar methodology as in \cite{eggen2021} to fit a VAR model, equivalent to the modified MCAR model in Eq.\,\eqref{eq:mcar-model_updated}, to temperature and U wind datasets. That is, with all following steps performed component-wise for $Y_1(t)$ and $Y_2(t)$, except when fitting the VAR model, the methodology goes as
\begin{enumerate}
    \item Fit a continuous seasonality function $\bs{\Lambda}(t)=[\Lambda_1(t),\Lambda_2(t)]$ to datasets representing $Y_1(t)$ and $Y_2(t)$. Deseasonalized datasets, represented by $X_1(t)$ and $X_1(t)$, are obtained by subtracting the fitted functions, $\Lambda_1(t)$ and $\Lambda_2(t)$, from the datasets.
    \item Fit a VAR($p$) model to the (two-dimensional) deseasonalized dataset using a standard statistical programming package. Subtract the fitted VAR($p$) model from the deseasonalized datasets (component-wise) to obtain datasets of model residuals. 
    \item Compute empirically the expected values of squared residuals each day over the year (assumed to be $365$ days as each February 29 is removed for convenience) to construct the bivariate volatility function, $\bs{\sigma}(t)=[\sigma_1(t),\sigma_2(t)]$. 
    \item Scale the residual datasets with the constructed volatility function components to obtain datasets of $\bs{\sigma}(t)$-scaled model residuals, to which NIG distributions are fitted. These distributions are said to be generated by the random vector $\bs{\mathcal{E}}(t)$.
    \item The MCAR model coefficients are computed from formulas derived in Section\,\ref{subsubsect:parameter_est}. 
\end{enumerate}

As in \cite{eggen2021}, the seasonality function components of $\bs{\Lambda}(t)$ are assumed to be given by
\begin{align}
    \label{eq:lambda_def}
    \Lambda_k(t) = c_0^{(k)} + c_1^{(k)}t + \sum_{j=1}^{10} \left( c_{2j}^{(k)}\cos(j\pi t/365) + c_{2j + 1}^{(k)}\sin(j \pi t/365) \right),
\end{align}
where $k=1,2$ gives the dimension and $c_0^{(k)},c_1^{(k)},c_2^{(k)},\ldots ,c_{21}^{(k)}$ are constants. Note that $c_0^{(k)}$ captures the initial mean value, $c_1^{(k)}$ captures the slope of the long-term liner changes, and $c_0^{(k)},c_1^{(k)},c_2^{(k)},\ldots ,c_{21}^{(k)}$ describe the yearly cycle as weights in the truncated Fourier series. The ten last years of the stratospheric temperature and U wind datasets are shown with a fitted seasonality function in Figure\,\ref{fig:seasonality_function}, and the corresponding parameter values of Eq.\,\eqref{eq:lambda_def} are listed in Table\,\ref{tab:seasonality_function_pars_temp_uwind}. \\

\begin{figure}[hbt!]
	\centering
	\begin{subfigure}[ht!]{0.49\textwidth}
	\centering
	\includegraphics[scale=0.38]{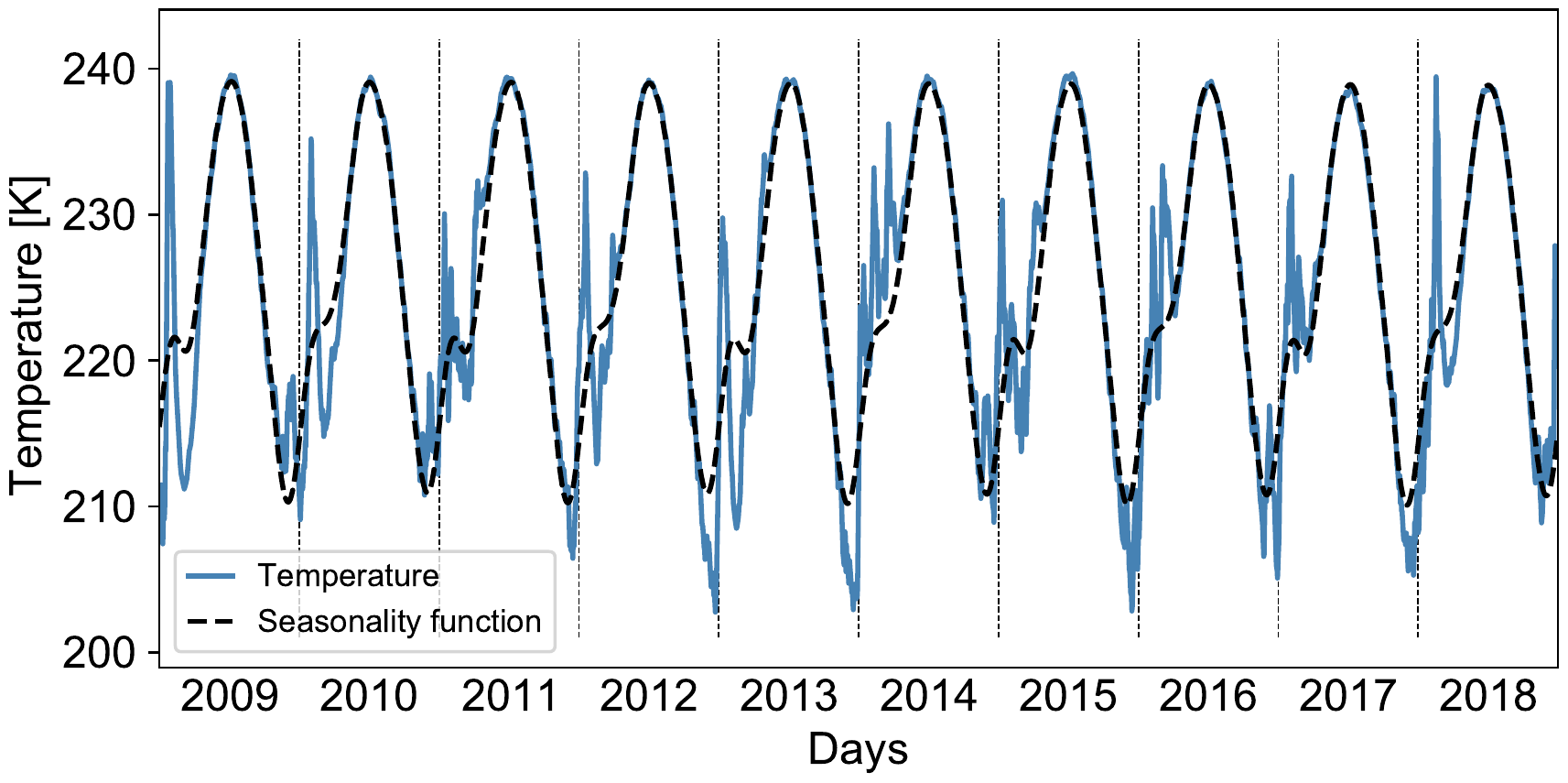}
	\caption{Stratospheric temperature}
	\label{fig:temp_seasonality}
	\end{subfigure}
	\begin{subfigure}[ht!]{0.49\textwidth}
	\centering
	\includegraphics[scale=0.38]{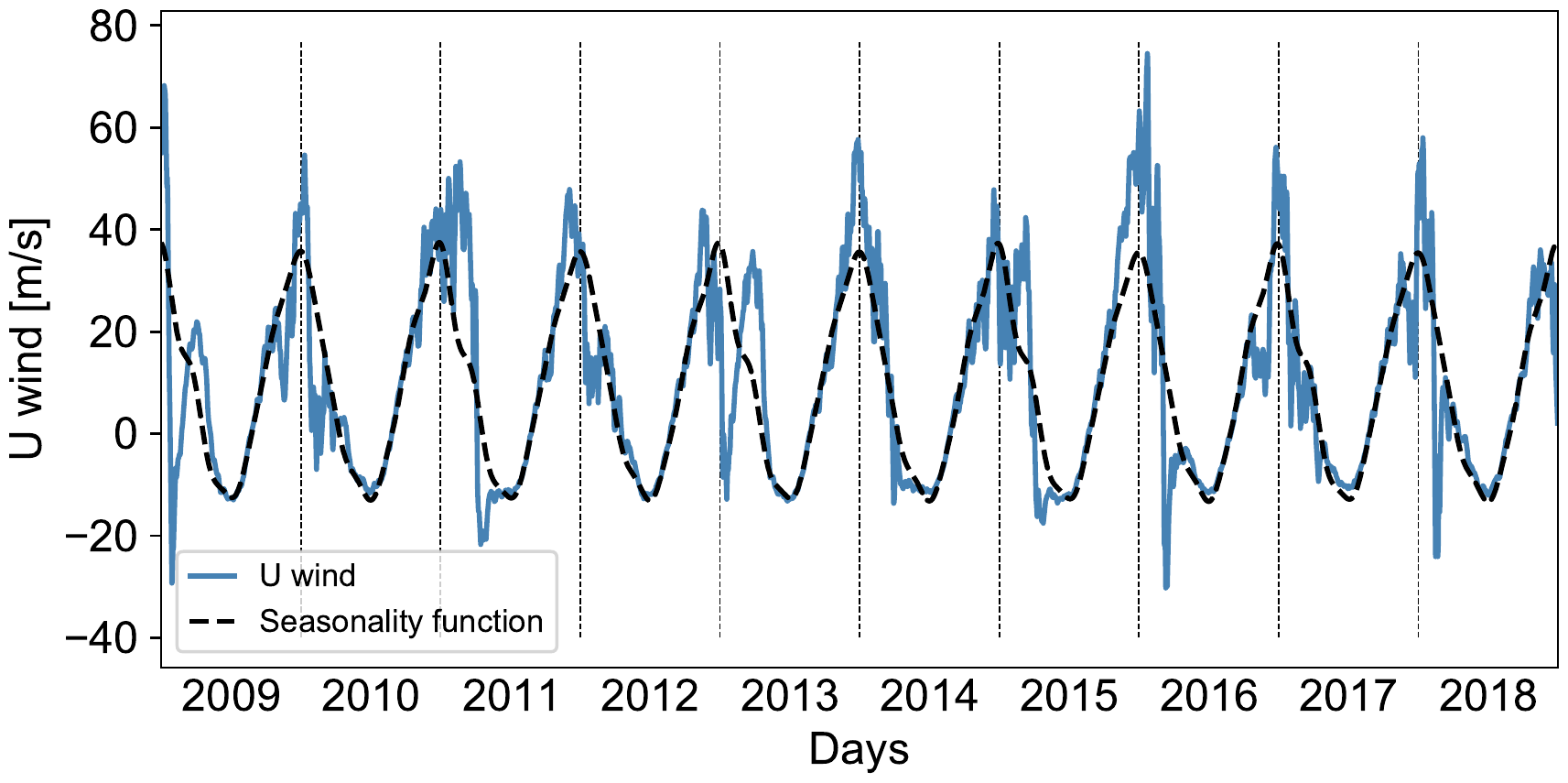}
	\caption{Stratospheric U wind}
	\label{fig:uwind_seasonality}
	\end{subfigure}
    \caption{Daily circumpolar mean stratospheric temperature and U wind from 1 January 2009 to 31 December 2018 with fitted seasonality function $\Lambda_1(t)$ and $\Lambda_{2}(t)$, respectively.}
    \label{fig:seasonality_function}
\end{figure}

\begin{table}[hbt!]
\centering
\caption{Coefficients of seasonality functions $\Lambda_1(t)$ and $\Lambda_2(t)$ fitted to stratospheric temperature and U wind. \label{tab:seasonality_function_pars_temp_uwind}}
\begin{tabular}{ccccccccccc}
\toprule
\multicolumn{11}{c}{Seasonality function parameters: Temperature} \\
\midrule
$c_0$ & $c_2$ & $c_4$ & $c_6$ & $c_8$ & $c_{10}$ & $c_{12}$ & $c_{14}$ & $c_{16}$ & $c_{18}$ & $c_{20}$  \\
$226.15$ & $-0.049$ & $-12.094$ & $0.23$ & $1.88$ & $0.33$            & $0.16$ & $0.13$ & $-0.094$ & $-0.15$ & $-0.014$ \\
\midrule
$c_1$ & $c_3$ & $c_5$ & $c_7$ & $c_9$ & $c_{11}$ & $c_{13}$ & $c_{15}$ & $c_{17}$ & $c_{19}$ & $c_{21}$  \\
$-0.000072$ & $-0.11$ & $1.63$ & $-0.23$ & $2.81$ & $-0.040$            & $1.54$ & $0.14$ & $0.45$ & $0.049$ & $0.11$ \\
\toprule
\multicolumn{11}{c}{Seasonality function parameters: U wind} \\
\midrule
$c_0$ & $c_2$ & $c_4$ & $c_6$ & $c_8$ & $c_{10}$ & $c_{12}$ & $c_{14}$ & $c_{16}$ & $c_{18}$ & $c_{20}$  \\
$11.18$ & $0.19$ & $22.58$ & $-0.40$ & $0.88$ & $-0.29$ & $1.12$ & $0.59$ & $1.06$ & $0.66$ & $0.95$ \\
\midrule
$c_1$ & $c_3$ & $c_5$ & $c_7$ & $c_9$ & $c_{11}$ & $c_{13}$ & $c_{15}$ & $c_{17}$ & $c_{19}$ & $c_{21}$  \\
$-0.00011$ & $0.17$ & $-4.29$ & $0.015$ & $-0.33$ & $-0.69$ & $0.18$ & $-0.57$ & $-0.37$ & $0.19$ & $0.31$ \\
\bottomrule  
\end{tabular}
\end{table}

Studying crosscorrelations of deseasonalized stratospheric temperature and U wind confirms dependence in daily lagged values between the two variates, see Figure\,\ref{fig:deseasonalized_correlations}. This supports the assumption that an MCAR process might represent the two-dimensional system well. A VAR model is fit to the deseasonalized dataset, corresponding to $[Y_1(t)-\Lambda_1(t),Y_2(t)-\Lambda_2(t)]$ (see Eq.\,\eqref{eq:mcar-model_updated}), using the add-on SSMMATLAB in MATLAB, see \cite{gomez_ssmmatlab} and \cite{gomez19}. The fitted VAR coefficients are given in Table\,\ref{tab:var_parameter_fit} with corresponding t-values. Most of the significance levels are acceptable. Note that the constants, $\bs{c}$, of the VAR model are taken as zeroes, as deseasonalized data are considered. \\

\begin{figure}[hbt!]
\centering
\includegraphics[scale=0.5]{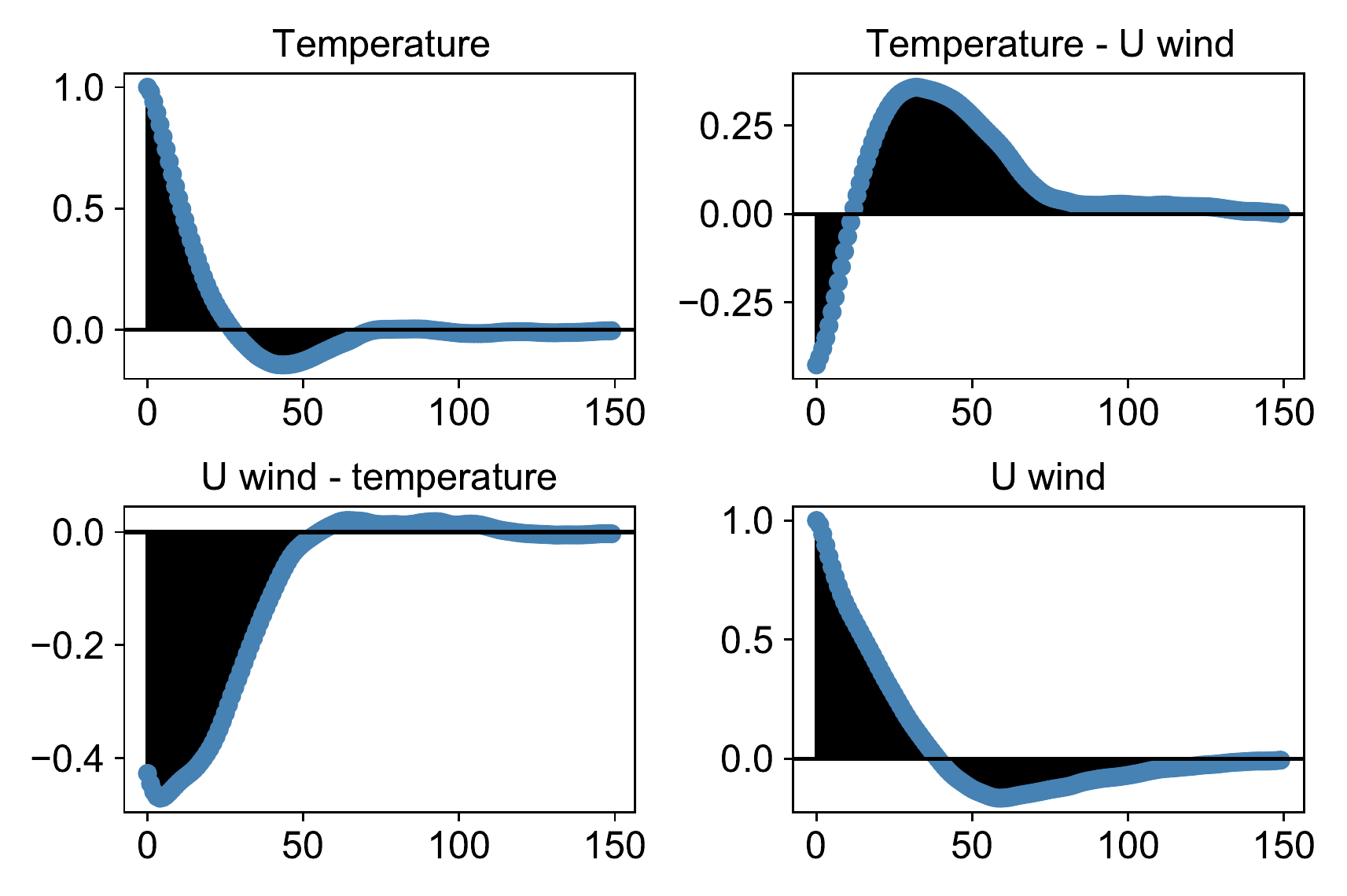}
\caption{Autocorrelation and crosscorrelation of deseasonalized stratospheric temperature and U wind.}
\label{fig:deseasonalized_correlations}
\end{figure}

\begin{table}[hbt!]
\centering
\caption{Coefficients of VAR model fitted to the two-dimensional dataset of deseasonalized stratospheric temperature and U wind, with corresponding t-values. \label{tab:var_parameter_fit}}
\begin{tabular}{ccccc}
\toprule
\multicolumn{5}{c}{VAR model coefficients and corresponding t-values} \\
\midrule
$\bs{c}$ & $\hat{\phi}_1$ & $\hat{\phi}_2$ & $\hat{\phi}_3$ & $\hat{\phi}_4$ \\
$\begin{bmatrix}0.0003 \\ -0.0014\end{bmatrix}$ &
$\begin{bmatrix} 1.53 & 0.0008 \\ -0.10 & 1.73\end{bmatrix}$ & 
$\begin{bmatrix} -0.73 & 0.024 \\ -0.097 & -1.03\end{bmatrix}$ &
$\begin{bmatrix} 0.27 & 0.0029 \\ 0.14 & 0.32\end{bmatrix}$ &
$\begin{bmatrix} -0.10 & 0.023  \\ 0.0026 & -0.041 \end{bmatrix}$ \\
$t_{\bs{c}}$ & $t_{\phi_1}$ & $t_{\phi_2}$ & $t_{\phi_3}$ & $t_{\phi_4}$ \\
$\begin{bmatrix}0.058 \\ -0.12\end{bmatrix}$ &
$\begin{bmatrix} 186.43 & 0.25 \\ -4.94 & 208.67\end{bmatrix}$ & 
$\begin{bmatrix} -48.83 & -3.78 \\ -2.56 & -63.25 \end{bmatrix}$ &
$\begin{bmatrix} 17.88 & 0.45 \\ 3.79 & 19.37 \end{bmatrix}$ &
$\begin{bmatrix} -12.18 & 6.91   \\ 0.13 & -4.92  \end{bmatrix}$ \\
\bottomrule  
\end{tabular}
\end{table}

The yearly varying multivariate volatility function $\bs{\sigma}(t)$ is estimated from the model residuals. As in \cite{eggen2021}, each volatility function, $\sigma_k(t)$ for $k=1,2$, is constructed from three truncated Fourier series on the form\\
\begin{align*}
    w_{f_i,k}^{(n_i)}(t) = d_{0,k}^{(i)} + \sum_{j=1}^{n_i}\left(d_{2j-1,k}^{(i)}\cos(f_i j\pi t/365) + d_{2j,k}^{(i)} \sin(f_i j\pi t/365)\right),
\end{align*}
where $k=1,2$ gives the dimension, $i\in\{1,2,3\}$ represents the series number, $f_i$ adjusts the series frequency, $n_i$ gives the number of terms in each series, and $d_{0,k}^{(i)},d_{2j-1,k}^{(i)},d_{2j,k}^{(i)}$ are constants. Each of the three fitted series, for each $k$, represents seasons winter/spring, summer, and autumn/winter, defined by the intervals [1 January,30 April], [1 May,31 October] and [1 November,31 December], respectively. See \cite{eggen2021} for an explanation of why. The final volatility function $\sigma_k(t)$ is constructed by connecting each of the three fitted functions $w_{f_i,k}^{(n_i)}(t)$ using sigmoid functions. That is, let the sigmoid function, $\omega(x)$, and the connective function, $\xi(x)$, be given by 
\begin{align*}
\omega(x) = \frac{1}{1+\exp\left(-(\frac{x-a}{b})\right)} \quad \text{and}\quad  \xi(x) = \big(1-\omega(x)\big)g_1(x) + \omega(x) g_2(x),
\end{align*}
where $a$ and $b$ are shift and scaling constants respectively, and $g_1(x)$ and $g_2(x)$ are two functions that are to be connected. The numerical results are as follows. For stratospheric temperature, the functions $w_{0.44,1}^{(2)}(t)$ and $w_{0.30,2}^{(2)}(t)$ are connected with $w_{2.0,1}^{(2)}(t)$ and $w_{0.50,2}^{(3)}(t)$ respectively, using parameters $a=120$ and $b=2$. Further, for stratospheric U wind, the functions $w_{2.0,1}^{(2)}(t)$ and $w_{0.50,2}^{(3)}(t)$ are connected with $w_{0.44,1}^{(2)}(t)$ and $w_{0.05,2}^{(4)}(t)$ respectively, using parameters $a=303$ and $b=5$. These connections make up two smooth functions $\sigma_1(t)$ and $\sigma_2(t)$, that are illustrated in Figure\,\ref{fig:estimated_variance} as the variance function $V_k(t)=\sigma^2_k(t)$, together with empirically estimated daily variance values over the year. The corresponding coefficients of the truncated Fourier series are given in Table\,\ref{tab:volatility_parameter_fit_temp} and Table\,\ref{tab:volatility_parameter_fit_uwind} for stratospheric temperature and U wind respectively. \\

\begin{figure}[hbt!]
	\centering
	\begin{subfigure}[ht!]{0.49\textwidth}
	\centering
	\includegraphics[scale=0.38]{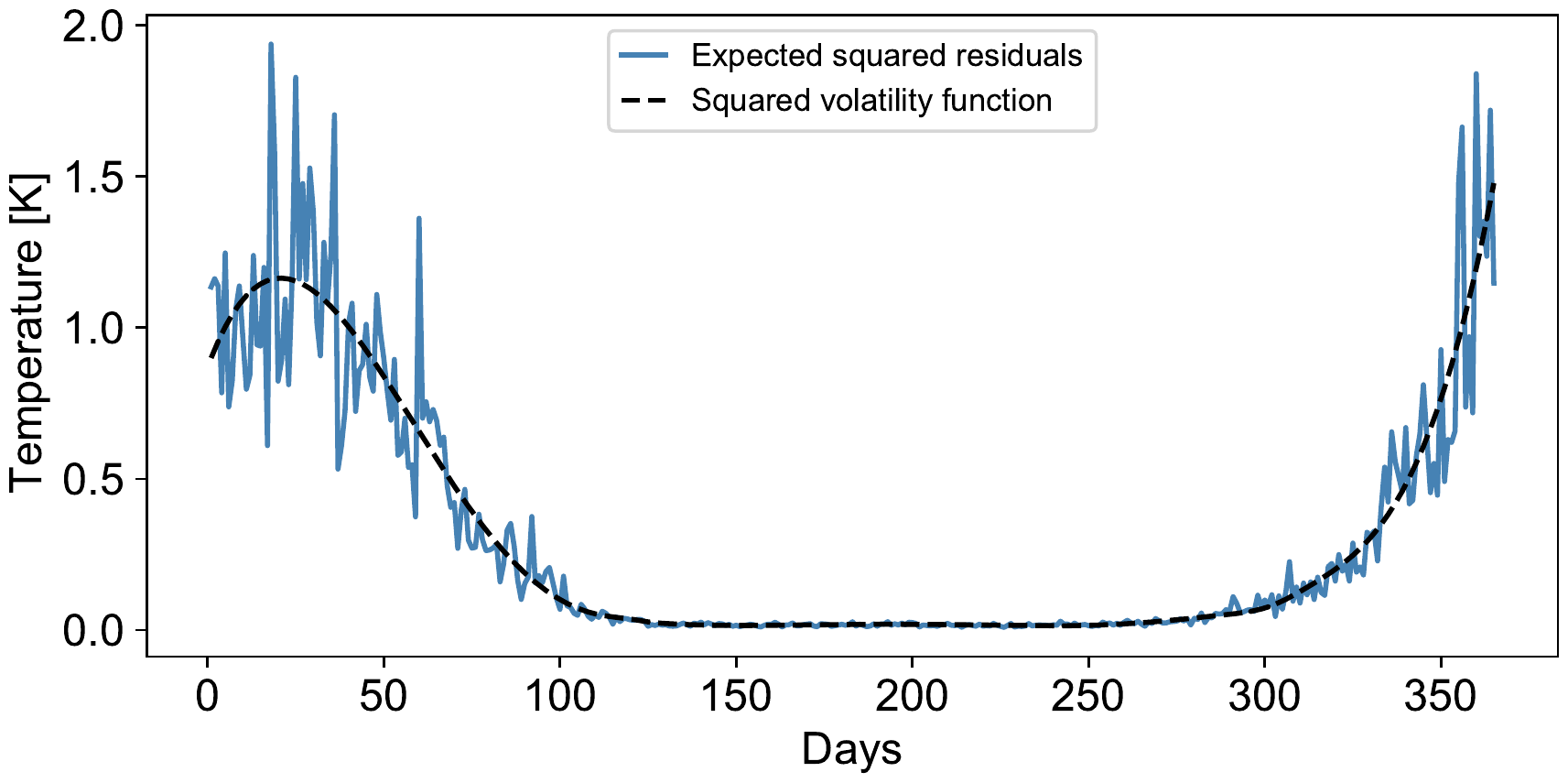}
	\caption{Stratospheric temperature}
	\label{fig:temp_variance}
	\end{subfigure}
	\begin{subfigure}[ht!]{0.49\textwidth}
	\centering
	\includegraphics[scale=0.38]{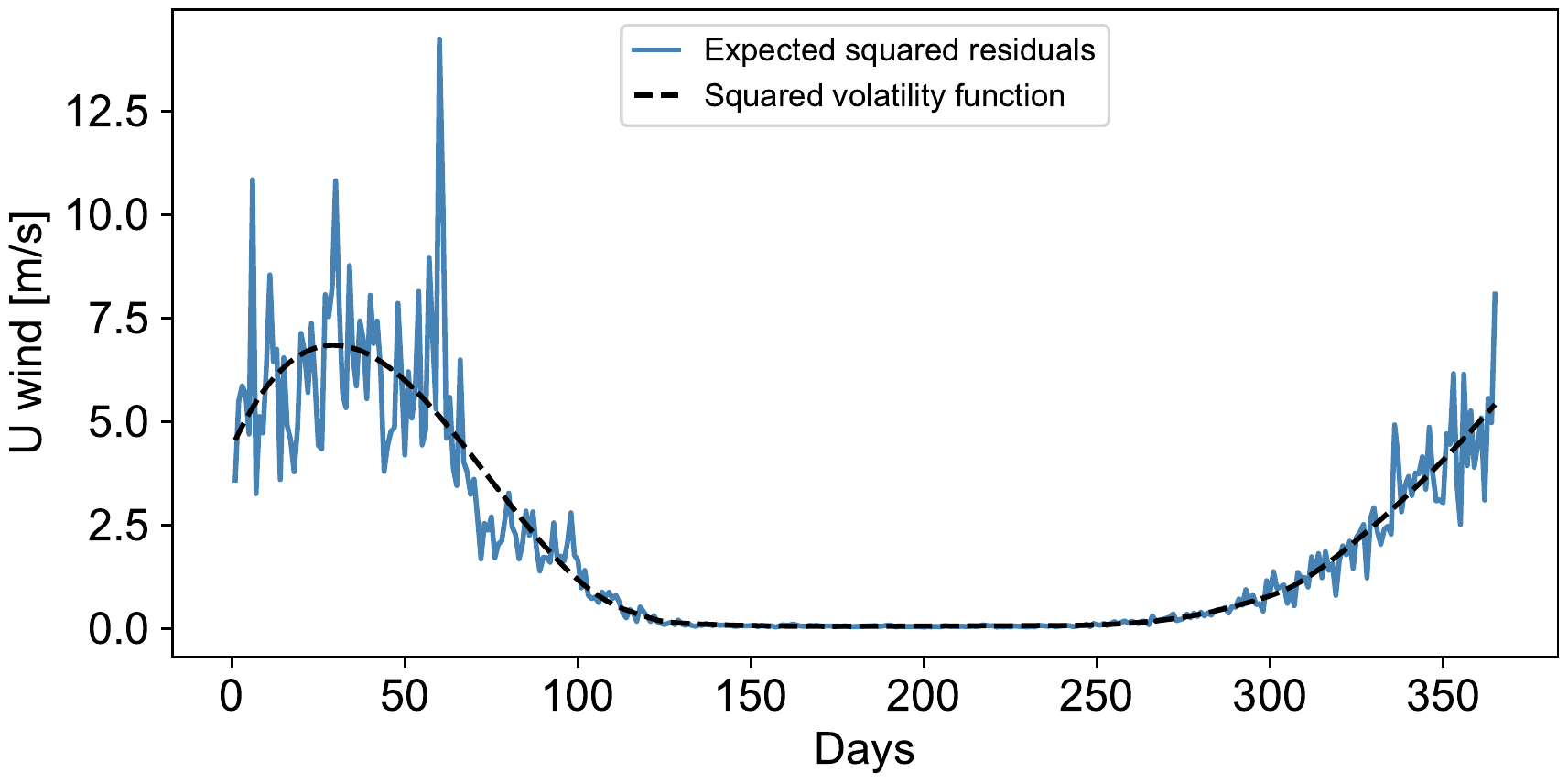}
	\caption{Stratospheric U wind}
	\label{fig:uwind_variance}
	\end{subfigure}
    \caption{Estimated expected squared residuals ($\simeq$\;variance) each day of the year and a fitted variance function.}
    \label{fig:estimated_variance}
\end{figure}

\begin{table}[hbt!]
\centering
\caption{Coefficients of the volatility function $\sigma_1(t)$ fitted to VAR model residuals of stratospheric temperature. \label{tab:volatility_parameter_fit_temp}}
\begin{tabular}{ccccc}
\toprule
\multicolumn{5}{c}{Winter/spring: $w_{0.44,1}^{(2)}$} \\
\midrule
$d_{0,1}^{(1)}$ & $d_{1,1}^{(1)}$ & $d_{2,1}^{(1)}$ & $d_{3,1}^{(1)}$ & $d_{4,1}^{(1)}$ \\
$-595.152$ & $749.200$ & $289.857$ & $-153.179$ & $-140.785$ \\
\midrule
\multicolumn{5}{c}{Summer: $w_{2.0,1}^{(2)}$} \\
\midrule
$d_{0,1}^{(2)}$ & $d_{1,1}^{(2)}$ & $d_{2,1}^{(2)}$ & $d_{3,1}^{(2)}$ & $d_{4,1}^{(2)}$ \\
$0.089$ & $0.103$ & $0.022$ & $0.033$ & $0.014$ \\
\midrule
\multicolumn{5}{c}{Autumn/winter: $w_{0.44,1}^{(2)}$} \\
\midrule
$d_{0,1}^{(3)}$ & $d_{1,1}^{(3)}$ & $d_{2,1}^{(3)}$ & $d_{3,1}^{(3)}$ & $d_{4,1}^{(3)}$ \\
$13.000$ & $-228.138$ & $75.488$ & $80.783$ & $87.432$ \\
\bottomrule  
\end{tabular}
\end{table}

\begin{table}[hbt!]
\centering
\caption{Coefficients of the volatility function $\sigma_2(t)$ fitted to VAR model residuals of stratospheric U wind. \label{tab:volatility_parameter_fit_uwind}}
\begin{tabular}{ccccccccc}
\toprule
\multicolumn{9}{c}{Winter/spring: $w_{0.30,2}^{(2)}$} \\
\midrule
$d_{0,2}^{(1)}$ & $d_{1,2}^{(1)}$ & $d_{2,2}^{(1)}$ & $d_{3,2}^{(1)}$ & $d_{4,2}^{(1)}$ & $-$ & $-$ & $-$ & $-$\\
$-32.767$ & $-333.062$ & $1960.769$ & $370.204$ & $-945.071$ & $-$ & $-$ & $-$ & $-$ \\
\midrule
\multicolumn{9}{c}{Summer: $w_{0.50,2}^{(3)}$} \\
\midrule
$d_{0,2}^{(2)}$ & $d_{1,2}^{(2)}$ & $d_{2,2}^{(2)}$ & $d_{3,2}^{(2)}$ & $d_{4,2}^{(2)}$ & $d_{5,2}^{(2)}$ & $d_{6,2}^{(2)}$ & $-$ & $-$\\
$22.726$ & $-127.933$ & $89.798$ & $126.536$ & $2.515$ & $-24.633$ & $-22.122$ & $-$ & $-$ \\
\midrule
\multicolumn{9}{c}{Autumn/winter: $w_{0.05,2}^{(4)}$} \\
\midrule
$d_{0,2}^{(3)}$ & $d_{1,2}^{(3)}$ & $d_{2,2}^{(3)}$ & $d_{3,2}^{(3)}$ & $d_{4,2}^{(3)}$ & $d_{5,2}^{(3)}$ & $d_{6,2}^{(3)}$ & $d_{7,2}^{(3)}$ & $d_{8,2}^{(3)}$ \\
$113.995$ & $93.302$ & $20.305$ & $33.465$ & $22.639$ & $-58.418$ & $-7.422$ & $-170.865$ & $-83.501$ \\
\bottomrule  
\end{tabular}
\end{table}

The $\bs{\sigma}(t)$-scaled VAR model residual datasets are shown to be NIG-distributed with statistical significance. The NIG parameters for stratospheric temperature and U wind are listed in Table\,\ref{tab:nig_parameter_fit} with corresponding KS test statistics and p-values. The autocorrelations and crosscorrelations of $\bs{\sigma}(t)$-scaled VAR model residuals are shown in Figure\,\ref{fig:residuals_correlations}. It is apparent that the two-dimensional VAR($4$) model explains most of the time lagged dependence. As discussed in \cite{eggen2021}, the remaining memory effects indicate that a stochastic volatility model would provide a more accurate model. \\

\begin{table}[hbt!]
\centering
\caption{Fitted NIG parameters for $\bs{\sigma}(t)$-scaled VAR model residuals and corresponding KS test results. \label{tab:nig_parameter_fit}}
\begin{tabular}{cccccc}
\toprule
\multicolumn{6}{c}{Stratospheric temperature} \\
\midrule
$a_1^{\mathcal{E}}$ & $b_1^{\mathcal{E}}$ & $\delta_1^{\mathcal{E}}$ & $\mu_1^{\mathcal{E}}$ & statistic & p-value \\
$2.93$ & $0.398$ & $1.70$ & $-0.234$ & $0.007$ & $0.47$ \\
\midrule
\multicolumn{6}{c}{Stratospheric U wind} \\
\midrule
$a_2^{\mathcal{E}}$ & $b_2^{\mathcal{E}}$ & $\delta_2^{\mathcal{E}}$ & $\mu_2^{\mathcal{E}}$ & statistic & p-value \\
$3.13$ & $-0.0781$ & $1.77$ & $0.0471$ & $0.005$ & $0.90$ \\
\bottomrule 
\end{tabular}
\end{table}

\begin{figure}[hbt!]
\centering
\includegraphics[scale=0.5]{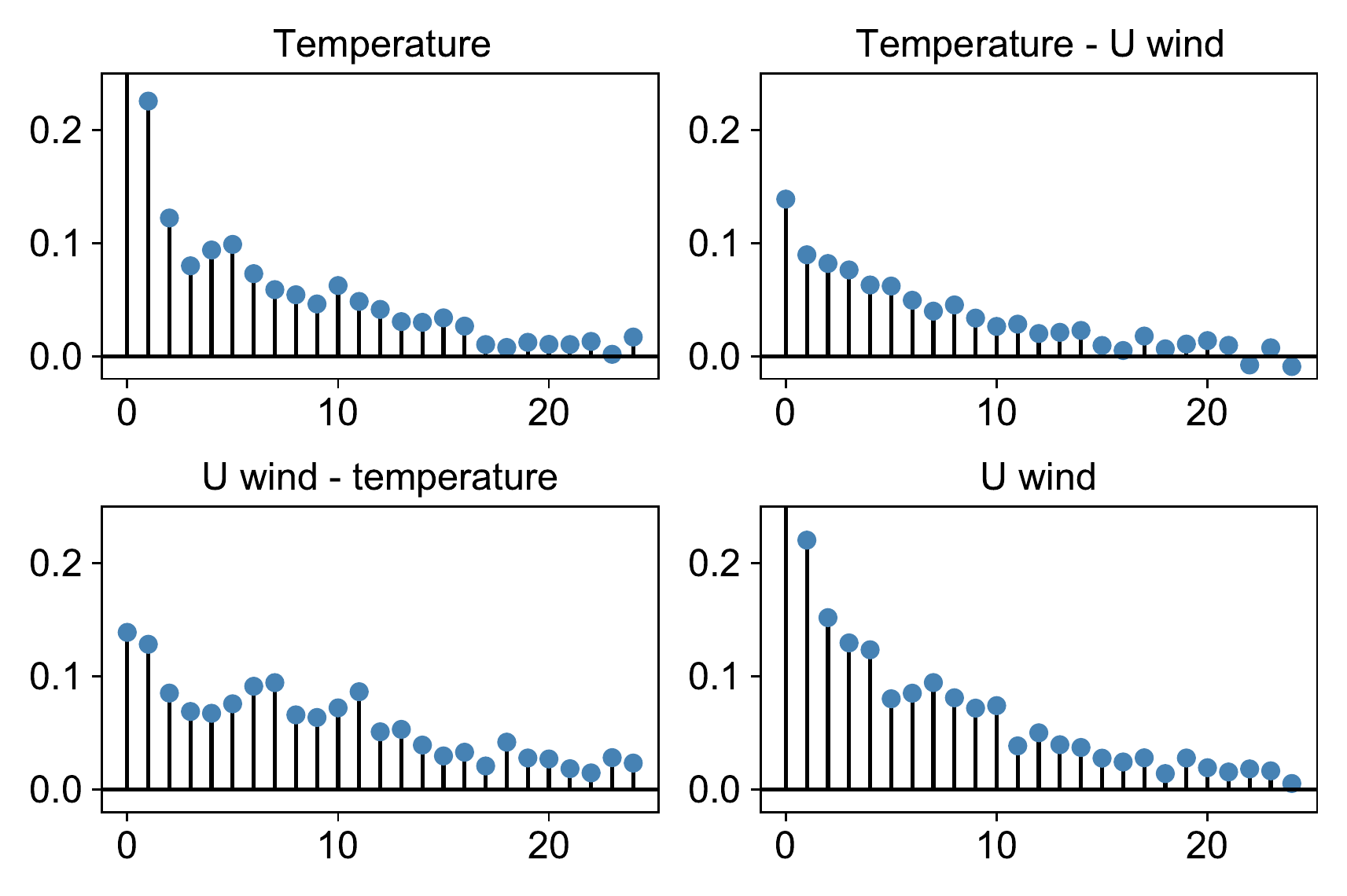}
\caption{Autocorrelation and crosscorrelation of squared $\bs{\sigma}(t)$-scaled model residuals of stratospheric temperature and U wind.}
\label{fig:residuals_correlations}
\end{figure}

The autoregressive MCAR model coefficients are found from the VARMA/MCARMA transformation relation in Theorem\,\ref{thm:the_transformation_relation}, that is explicitly given for our model framework in Section\,\ref{subsubsect:parameter_est}. The empirical results are shown in Table\,\ref{tab:estimated_mcar_coefficients}, where the autoregressive coefficients are displayed as the matrix components of the MCAR model block matrix $A$, see Eq.\,\eqref{eq:the_mcarma_model} and \eqref{eq:matrix_element_definition}. \\

\begin{table}[hbt!]
\centering
\caption{Estimated autoregressive coefficients of the MCAR model fit to stratospheric temperature and U wind. \label{tab:estimated_mcar_coefficients}}
\begin{tabular}{cccc}
\toprule
\multicolumn{4}{c}{Autoregressive MCAR model coefficients} \\
\midrule
$\hat{A}_4$ & $\hat{A}_3$ & $\hat{A}_2$ & $\hat{A}_1$ \\
$\begin{bmatrix} 0.030 & -0.0019 \\ 0.054 & 0.029\end{bmatrix}$ & 
$\begin{bmatrix} -4.41 & 0.043 \\ 0.36 & -4.43\end{bmatrix}$ &
$\begin{bmatrix} -2.87 & 0.022 \\ 0.41 & -3.15 \end{bmatrix}$ &
$\begin{bmatrix} -2.53 & -0.00083  \\ 0.10 & -2.72 \end{bmatrix}$ \\
\bottomrule  
\end{tabular}
\end{table}

As mentioned in Section\,\ref{subsec:varma_mcarma_definition}, the MCAR process has a unique causal stationary solution when the eigenvalues of the block matrix $A$ have negative real parts. As reported in Table\,\ref{tab:block_matrices_eigenvalues}, we see that the eigenvalues $\lambda^{(A)}_{3,4}$ fail having this property. Following \cite{gomez16}, we know that the VAR process has a unique causal stationary solution if and only if the modulus of each root of the polynomial $\det(\phi(z))$ is greater than one. Note that $\phi(z) = \left(I-\phi_1z-\phi_2z^2 -\phi_3z^3-\phi_4z^4\right)$ in this case, see Eq.\,\eqref{eq:varma_matrix_polynomials}. Further, \cite{gomez16} states that when the VAR process is written in a so-called Akaike's state space form, one can show that 
\begin{align*}
    \det(\phi(\lambda)) = \det(\mathbb{1}_d-F\lambda),\quad\text{where}\quad F = \begin{bmatrix}
       \mathbb{0}_d & \mathbb{1}_d & \mathbb{0}_d & \mathbb{0}_d       \\
       \mathbb{0}_d & \mathbb{0}_d & \mathbb{1}_d & \mathbb{0}_d        \\
       \mathbb{0}_d & \mathbb{0}_d & \mathbb{0}_d & \mathbb{1}_d \\
       \phi_4 & \phi_3 & \phi_2  & \phi_1 \\
     \end{bmatrix}.
\end{align*}
This means that the above condition on existence of a proper VAR process solution is equivalent to the modulus of each eigenvalue of $F$ being less than one. The eigenvalues of $F$ are listed in Table\,\ref{tab:block_matrices_eigenvalues} with their respective modulus. All eight eigenvalues have modulus less than one, and thus the fitted VAR process has a unique causal stationary solution. \\

\begin{table}[hbt!]
\centering
\caption{Eigenvalues of the MCAR state space representation matrix, $A$, and of the VAR Akaike's state space form matrix, $F$. \label{tab:block_matrices_eigenvalues}}
\begin{tabular}{ccccc}
\toprule
\multicolumn{5}{c}{Eigenvalues of $A$} \\
\midrule
$\lambda_{1}^{(A)}$ & $\lambda_{2}^{(A)}$ & $\lambda_{3,4}^{(A)}$ & $\lambda_{5,6}^{(A)}$ & $\lambda_{7,8}^{(A)}$ \\
$-2.21$ \hspace{1mm} & \hspace{1mm} $-2.15$ \hspace{1mm} & \hspace{1mm} $0.0067\pm 0.0021i$ \hspace{1mm} & \hspace{1mm} $-0.20\pm 1.42i$ \hspace{1mm} & \hspace{1mm} $-0.25\pm 1.40i$ \\
\midrule
\multicolumn{5}{c}{Eigenvalues of $F$} \\
\midrule
$\lambda_{1}^{(F)}$ & $\lambda_{2}^{(F)}$ & $\lambda_{3,4}^{(F)}$ & $\lambda_{5,6}^{(F)}$ & $\lambda_{7,8}^{(F)}$  \\
$0.18$ & $0.68$ & $-0.024\pm 0.41i$ & $0.28\pm 0.38i$ & $0.94\pm 0.035i$  \\
\midrule
\multicolumn{5}{c}{Modulus of eigenvalues of $F$} \\
\midrule
$\abs{\lambda_{1}^{(F)}}$ & $\abs{\lambda_{2}^{(F)}}$ & $\abs{\lambda_{3,4}^{(F)}}$ & $\abs{\lambda_{5,6}^{(F)}}$ & $\abs{\lambda_{7,8}^{(F)}}$  \\
$0.18$ & $0.68$ & $0.41$ & $0.47$ & $0.94$  \\
\bottomrule  
\end{tabular}
\end{table}

The above test for existence of stationary solutions show that there exist a solution for the the VAR process, however not necessarily for the corresponding MCAR process. This fact leads us to suspect that the real part of $\lambda^{(A)}_{3,4}$ is positive due to approximation errors. That is, the transformation relation transforming autoregressive VAR model coefficients to corresponding MCAR model coefficients is based on discretization, introducing an Euler discretization error to the autoregressive (deterministic) part. Also standard programming packages used to compute eigenvalues of large matrices introduce approximation errors. For example, the perturbation $\rho=-0.03$ of the blocks $A_j$ $(j=1,2,3,4)$ in $A$ results in eigenvalues $\lambda^{(A)}_{3,4}=-0.00032\pm 0.0062i$, where the rest of the eigenvalues have negative real part as well. It is also likely that additional fine-tuning of the seasonality functions, $\bs{\Lambda}(t)$, and yearly volatility functions, $\bs{\sigma}(t)$, give an MCAR model satisfying the existence condition, regardless of approximation errors. \\

Finally, the model error coefficients, $\beta$, are calculated based on the derivations in Section\,\ref{subsubsect:parameter_est}. As shown in Eq.\,\eqref{eq:eq_to_find_c_delta_general}, the coefficients are dependent on the scale parameters, $\delta_1^{\mathcal{E}}$ and $\delta_2^{\mathcal{E}}$, of the NIG distributions $\mathcal{E}_1(t)$ and $\mathcal{E}_2(t)$, as well as the crosscorrelation matrix, $\Sigma$, of $\bs{\mathcal{E}}(t)$. The formula in Eq.\,\eqref{eq:eq_to_find_c_delta_general} is derived by requiring that the scale parameters of the idiosyncratic error distributions, $\Delta L_1(t)$ and $\Delta L_2(t)$ are equal to a constant $C_\delta$. As $\delta_1^{\mathcal{E}}$, $\delta_2^{\mathcal{E}}$ and $\Sigma$ are known parameters (see the procedure of fitting the autoregressive model coefficients), the task of solving the system in Eq.\,\eqref{eq:eq_to_find_c_delta_general} reduces to finding a constant $C_\delta$ satisfying all given restrictions. An analytical solution is not found, and a standard numerical solver has to be used to find an optimal value of $C_\delta$.

The empirical covariance matrix of $\bs{\mathcal{E}}(t)$ is given by 
\begin{align*}
    \hat{\Sigma} = \begin{bmatrix}
    1.018 & -0.02238 \\
    -0.02238 & 1.006
    \end{bmatrix}.
\end{align*}
By definition, the distributions of $\mathcal{E}_1(t)$ and $\mathcal{E}_2(t)$ are nearly independent, meaning that $\beta$ is approximately equal to the identity matrix (see Eq.\,\eqref{eq:beta_linear_system}). To confirm this heuristic result, and to give an illustrative example of the derivations in Section\,\ref{subsubsect:parameter_est}, the system in Eq.\,\eqref{eq:eq_to_find_c_delta_general} is solved, and the results presented. An appropriate solution is found for the case when 
\begin{align}
\label{eq:eq_to_find_c_delta}
    \sqrt{\Sigma_{11} - \beta_{1,2}^2}\beta_{2,1}  +\sqrt{\Sigma_{22} - \beta_{2,1}^2}\beta_{1,2} = \Sigma_{12},
\end{align}
and
\begin{align*}
\beta_{1,2}=\frac{1}{2C_\delta}\left( \sqrt{2\Sigma_{11}C_\delta^2 - (\delta_1^{\mathcal{E}})^2}-\delta_1^{\mathcal{E}}\right)\quad\text{and}\quad \beta_{2,1} \in \frac{1}{2C_\delta}\left( \sqrt{2\Sigma_{22}C_\delta^2 - (\delta_2^{\mathcal{E}})^2}-\delta_2^{\mathcal{E}}\right).
\end{align*}
The estimated constant $\hat{C}_\delta$ with corresponding model error coefficients, $\hat{\beta}$, and final restriction values (see Eq.\,\eqref{eq:beta_0_restrictions}) are given in Table\,\ref{tab:beta_fit_results}. Also the relative error rates between $\delta_{1,2}^{\mathcal{E}}$ and $\hat{\delta}_{1,2}^{\mathcal{E}}$ are given, see Eq.\,\eqref{eq:parameters_delta_restrictions}. The empirically computed results confirm that the linear relationship between the distributions of $\bs{\sigma}(t)$-scaled model residuals of stratospheric temperature and U wind are approximately independent.

\begin{table}[hbt!]
\centering
\caption{Fitted idiosyncratic error distribution scale parameter, $\hat{C}_\delta$, with computed model error coefficients, $\hat{\beta}$, corresponding restriction values, and an error result. \label{tab:beta_fit_results}}
\begin{tabular}{ccccc}
\toprule
$\hat{C}_\delta$ & $\hat{\beta}$ & $\delta_{1}^{\mathcal{E}}/\sqrt{2\Sigma_{11}},\delta_{2}^{\mathcal{E}}/\sqrt{2\Sigma_{22}}$ & $\hat{\beta}_{1,2}^2,\hat{\beta}_{2,1}^2$ & $\abs{\hat{\delta}_{1}^{\mathcal{E}}-\delta_{1}^{\mathcal{E}}}/\delta_{1}^{\mathcal{E}},\abs{\hat{\delta}_{2}^{\mathcal{E}}-\delta_{2}^{\mathcal{E}}}/\delta_{2}^{\mathcal{E}}$ \\
\midrule
$1.709$ & $\begin{bmatrix}1.009 & 0.01174 \\ -0.03310 & 1.002\end{bmatrix}$ & $1.194,1.247$ & $0.0001382,0.001096$ & $0.02358,1.255\cdot 10^{-16}$ \\
\bottomrule  
\end{tabular}
\end{table}


\section{\label{sect:conslusions}Conclusions}
A transformation relation giving a direct relationship between discrete time VARMA processes and continuous time MCARMA processes is derived. The transformation relation is a potential method in future model estimation of MCARMA processes. Convergence results are given as a tool to substantiate validity of the transformation. A demonstration of applying the transformation relation in model estimation of an MCAR model is given through a case study. \\

This work leans on derivations in \cite{marquardt07}, concluding that VARMA processes are the proper discrete time analogue of MCARMA processes. Future work could investigate if the VARMA process remains such proper analogue to the MCARMA process when the framework is extended to include additive seasonality and heteroscedasticity. Further, to develop methods for estimating model error coefficients for an MCAR model with more than two driving L{\'e}vy processes is an important topic. In this work, the transformation relation is demonstrated by fitting an MCAR model to data. A next step would be to investigate the possibility of estimating driving processes from MCARMA models, which includes estimating lagged versions of L{\'e}vy processes. \\


\begin{appendices}


\section{Proof Proposition 1}
\label{app:1}


In this proof, we will see that $Q_i^{(l)}$ corresponds to an SDE number (see Table\,\ref{tab:index_collection_dependence}). The statement $Q_i^{(l)}\in\{\mathcal{C}^{S},\mathcal{C}^{R},\mathcal{C}^{AR}\}$ refer to that the SDE with SDE number $Q_i^{(l)}$ is in one of the collections $\{\mathcal{C}^{S},\mathcal{C}^{R},\mathcal{C}^{AR}\}$. See Eq.\,\eqref{eq:the_recursive_parameter} for definition of the recursive parameter $Q_i^{(l)}$. The result is derived in four stages:\\


\begin{enumerate}
    \item The case $l=p$: Consider first $Q_1^{(p)}$ (i=1). We see that 
\begin{align*}
    Q_1^{(p)} =  Q_2^{(p)} + d \in \mathcal{C}^{AR} \quad\leftrightarrow\quad Q_2^{(p)} \in \mathcal{C}^{R}.
\end{align*}
This means that, by Eq.\,\eqref{eq:sde_recursive_equations}, Eq.\,\eqref{eq:elementwise_p_and_q_terms} and Table\,\ref{tab:index_collection_dependence}, we might write
    \begin{align*}
     X_{Q_2^{(p)}+d}(t)dt = dX_{Q_2^{(p)}}(t) - \sum_{r=1}^{m}\beta_{(k-1)m + r}^{(1)}dL_r(t).
\end{align*}
Similarly,
\begin{align*}
    Q_2^{(p)}=Q_3^{(p)} + d\in \mathcal{C}^{R} \quad\leftrightarrow\quad Q_3^{(p)} \in \mathcal{C}^{R},
\end{align*}
such that, by Eq.\,\eqref{eq:sde_recursive_equations}, Eq.\,\eqref{eq:elementwise_p_and_q_terms} and Table\,\ref{tab:index_collection_dependence}, we might write
\begin{align*}
     X_{Q_3^{(p)}+d}(t)dt = dX_{Q_3^{(p)}}(t) - \sum_{r=1}^{m}\beta_{(k-1)m+r}^{(2)}dL_r(t).
\end{align*}
Continuing in a similar fashion for SDE numbers in the collection $\mathcal{C}^R$, we finally hit 
\begin{align*}
    X_{Q_{p-1}^{(p)}+d}(t)dt = dX_{Q_{p-1}^{(p)}}(t) - \sum_{r=1}^{m}\beta_{(k-1)m+r}^{(p-2)}dL_r(t),
\end{align*}
and since 
\begin{align*}
    Q_{p-1}^{(p)}=Q_{p}^{(p)}+d\in \mathcal{C}^R  \quad\leftrightarrow\quad Q_{p}^{(p)}\in \mathcal{C}^{S},
\end{align*}
this finally gives 
\begin{align*}
    X_{Q_{p}^{(p)}+d}(t)dt = dX_{Q_{p}^{(p)}}(t) - \sum_{r=1}^{m}\beta_{(k-1)m+r}^{(p-1)}dL_r(t),
\end{align*}
by Eq.\,\eqref{eq:sde_recursive_equations}, Eq.\,\eqref{eq:elementwise_p_and_q_terms} and Table\,\ref{tab:index_collection_dependence}. This proves that 
\begin{align*}
     X_{Q_i^{(p)}+d}(t)dt = dX_{Q_i^{(p)}}(t) - \sum_{r=1}^{m}\beta_{(k-1)m+r}^{(p-p+i-1)}dL_r(t),
\end{align*}
holds for $2\leq i \leq p$.

\item The case $l=p-1$: Consider $Q_1^{(p-1)}$ and $Q_2^{(p-1)}$ ($i=1$ and $i=2$ respectively). We see that 
\begin{align*}
Q_1^{(p-1)}\in \mathcal{C}^{R} \quad \leftrightarrow\quad Q_1^{(p-1)} + d\in  \mathcal{C}^{AR},
\end{align*}
and that
\begin{align*}
    Q_1^{(p-1)}=Q_2^{(p-1)} + d\in \mathcal{C}^R  \quad\leftrightarrow\quad Q_2^{(p-1)} \in \mathcal{C}^R.
\end{align*}
By Eq.\,\eqref{eq:sde_recursive_equations}, Eq.\,\eqref{eq:elementwise_p_and_q_terms} and Table\,\ref{tab:index_collection_dependence} we thus might write
\begin{align*}
     X_{Q_1^{(p-1)}+d}(t)dt = dX_{Q_1^{(p-1)}}(t) - \sum_{r=1}^{m}\beta_{(k-1)m+r}^{(1)}dL_r(t).
\end{align*}
and
\begin{align*}
     X_{Q_2^{(p-1)}+d}(t)dt = dX_{Q_2^{(p-1)}}(t) - \sum_{r=1}^{m}\beta_{(k-1)m+r}^{(2)}dL_r(t).
\end{align*}
Continuing in a similar fashion for SDE numbers in the collection $\mathcal{C}^R$, we finally hit
\begin{align*}
    X_{Q_{p-2}^{(p-1)}+d}(t)dt = dX_{Q_{p-2}^{(p-1)}}(t) - \sum_{r=1}^{m}\beta_{(k-1)m+r}^{(p-2)}dL_r(t),
\end{align*}
and since 
\begin{align*}
    Q_{p-2}^{(p-1)} = Q_{p-1}^{(p-1)}+d\in \mathcal{C}^R  \quad\leftrightarrow\quad Q_{p-1}^{(p-1)}\in \mathcal{C}^S,
\end{align*}
this finally gives
\begin{align*}
    X_{Q_{p-1}^{(p-1)}+d}(t)dt = dX_{Q_{p-1}^{(p-1)}}(t) - \sum_{r=1}^{m}\beta_{(k-1)m+r}^{(p-1)}dL_r(t),
\end{align*}
by Eq.\,\eqref{eq:sde_recursive_equations}, Eq.\,\eqref{eq:elementwise_p_and_q_terms} and Table\,\ref{tab:index_collection_dependence}. This proves that
\begin{align*}
     X_{Q_i^{(p-1)}+d}(t)dt = dX_{Q_i^{(p-1)}}(t) - \sum_{r=1}^{m}\beta_{(k-1)m+r}^{(p-(p-1)+i-1)}dL_r(t),
\end{align*}
holds for $1\leq i \leq p-1$.\\

    \item The cases $l\in\{2,\ldots ,p-2\}$: Continuing in a similar fashion as in points 1. and 2. for $Q_i^{(l)}$, $l\in\{2,\ldots ,p-2\}$, with $Q_1^{(l)}$ as the starting point for each $l$, we realize that 
\begin{align}
    \label{app:sde_by_recursive_parameter}
     X_{Q_i^{(l)}+d}(t)dt = dX_{Q_i^{(l)}}(t) - \sum_{r=1}^{m}\beta_{(k-1)m+r}^{(p-l+i-1)}dL_r(t),
\end{align}
holds for for all $l\in\{2,\ldots ,p\}$ and $1\leq i\leq l$ ($i>1$ when $l=p$). 
    
    \item The case $l=1$: The only valid case for $l=1$ is when $i=1$. By Eq.\,\eqref{eq:sde_recursive_equations}, Eq.\,\eqref{eq:elementwise_p_and_q_terms} and Table\,\ref{tab:index_collection_dependence} this trivially gives
    \begin{align*}
     X_{Q_1^{(1)}+d}(t)dt = dX_{Q_1^{(1)}}(t) - \sum_{r=1}^{m}\beta_{(k-1)m+r}^{(p-1)}dL_r(t),
     \end{align*}
     which satisfy Eq.\,\eqref{app:sde_by_recursive_parameter} as well.
\end{enumerate}
This concludes the proof.


\section{Proof Lemma 1}
\label{app:2}


The definition of the recursive parameter, $Q_i^{(l)}$, is used throughout this proof, see Eq.\,\ref{eq:the_recursive_parameter}. The proof is performed in four stages:
\begin{enumerate}
    \item The backwards recursive procedure from $x_{Q_1^{(l)}}$ (SDEs in $\mathcal{C}^R$) to the solution SDEs, $x_{Q_l^{(l)}}(t)=x_k(t)$, in $\mathcal{C}^S$ goes as follows. We know from Eq.\,\eqref{eq:q_stochastic_diff_eq_discretization} and Proposition\,\ref{prop:q_stochastic_diff_eq} that 
\begin{align}
    \label{app_eq:discretized_formula}
    x_{Q_i^{(l)}+d}(t) = \frac{1}{h}\left(x_{Q_i^{(l)}}(t+h) - x_{Q_i^{(l)}}(t) - \sum_{r=1}^m\beta_{(k-1)m+r}^{(p-l+i-1)}\Delta L_r (t)\right),
\end{align}
for $1\leq l \leq p$ and $1\leq i\leq l$ ($i>1$ when $l=p$). Thus, for the case $1\leq l < p$, the backwards recursive procedure starting point ($i=1$) is given by
\begin{align}
    \label{app_eq:base_equations}
    x_{Q_1^{(l)} + d}(t) = \frac{1}{h}\left( x_{Q_1^{(l)}}(t + h) - x_{Q_1^{(l)}}(t) - \sum_{r=1}^{m}\beta_{(k-1)m+r}^{(p-l)}\Delta L_r(t) \right).
\end{align}
By definition of $Q_i^{(l)}$ we have that $x_{Q_1^{(l)}}(t) = x_{Q_2^{(l)} + d}(t)$, and by Eq.\,\eqref{app_eq:discretized_formula} we might write 
\begin{align}
    \label{app_eq:base_equations_plus_1}
    x_{Q_2^{(l)} + d}(t) = \frac{1}{h}\left( x_{Q_2^{(l)}}(t + h) - x_{Q_2^{(l)}}(t) - \sum_{r=1}^{m}\beta_{(k-1)m+r}^{(p-l+1)}\Delta L_r(t) \right).
\end{align}
That is, combining Eq.\,\eqref{app_eq:base_equations} and \eqref{app_eq:base_equations_plus_1}, we find
\begin{align}
    \begin{split}
    \label{app_eq:first_recursive_step_equation}
    x_{Q_1^{(l)} + d} =& \frac{1}{h}\Bigg\{ \frac{1}{h}\left(x_{Q_2^{(l)}}(t+2h) - x_{Q_2^{(l)}}(t+h)- \sum_{r=1}^{m}\beta_{(k-1)m+r}^{(p-l+1)}\Delta L_r(t+h)\right) \\
     & \qquad - \frac{1}{h}\left(x_{Q_2^{(l)}}(t+h) - x_{Q_2^{(l)}}(t)- \sum_{r=1}^{m}\beta_{(k-1)m+r}^{(p-l+1)}\Delta L_r(t)\right) - \sum_{r=1}^{m}\beta_{(k-1)m+r}^{(p-l)}\Delta L_r(t)\Bigg\} \\
     =& \frac{1}{h^2}\Bigg( x_{Q_2^{(l)}}(t+2h) - 2x_{Q_2^{(l)}}(t+h) + x_{Q_2^{(l)}}(t) \Bigg)\\
     &- \frac{1}{h^2}\sum_{r=1}^{m}\beta_{(k-1)m+r}^{(p-l+1)}\left(\Delta L_r(t+h) - \Delta L_r(t)\right) -\frac{1}{h}\sum_{r=1}^{m}\beta_{(k-1)m+r}^{(p-l)}\Delta L_r(t).
     \end{split}
\end{align}
Further, by definition of $Q_i^{(l)}$ we have that $x_{Q_2^{(l)}}(t) = x_{Q_3^{(l)} + d}(t)$, and by Eq.\,\eqref{app_eq:discretized_formula} we might write
\begin{align}
    \label{app_eq:base_equations_plus_2}
    x_{Q_3^{(l)} + d}(t) = \frac{1}{h}\left( x_{Q_3^{(l)}}(t + h) - x_{Q_3^{(l)}}(t) - \sum_{r=1}^{m}\beta_{(k-1)m+r}^{(p-l+2)}\Delta L_r(t) \right).
\end{align}
That is, $x_{Q_1^{(l)}+d}(t)$ in Eq.\,\eqref{app_eq:first_recursive_step_equation} might be rewritten as a function of $x_{Q_3^{(l)}}(t+3h),\ldots ,x_{Q_3^{(l)}}(t),$ $\Delta L_r(t+2h),\Delta L_r(t+h),\Delta L_r(t)$ using Eq.\,\eqref{app_eq:base_equations_plus_2}. Continuing in a similar fashion for $x_{Q_3^{(l)}} = x_{Q_4^{(l)}+d}$, and so on, to recursively rewrite $x_{Q_1^{(1)}+d}$, we finally reach a right hand side of Eq.\,\eqref{app_eq:first_recursive_step_equation} which is given by $x_{Q_l^{(l)}}(t+lh),\ldots ,x_{Q_l^{(l)}}(t),\Delta L_r(t+(l-1)h),\ldots ,\Delta L_r(t)$.

    \item The backwards recursive procedure in point 1. is a analogous to the $1$-dimensional AR/CAR transformation relation formula, see \cite{benth2008}. From \cite{benth2008} (Lemma 10.2) we know that when a discrete variable $x_{\tilde{q}+1}(t)$ ($\tilde{q}\in\mathbb{N}$) satisfies
\begin{align}
    \label{app_eq:1d_discretized_formula}
    x_{\tilde{q}+1}(t) = x_{\tilde{q}}(t+1) - x_{\tilde{q}}(t),\quad 1\leq \tilde{q} \leq p-1,
\end{align}
it holds that 
\begin{align}
    \label{app_eq:benth2008_recursive_formula}
    x_{\tilde{q}+1}(t) = \sum_{n=0}^{\tilde{q}}(-1)^nb_n^{\tilde{q}}x_1(t+\tilde{q}-n),
\end{align}
where the coefficients $b_n^{\tilde{q}}$ ($n\in\mathbb{N}$) are defined recursively as 
\begin{align}
    \label{eq:pascal_triangle}
    b_n^{\tilde{q}} = b_{n-1}^{\tilde{q}-1},\quad 1\leq n \leq p-1,\tilde{q}\geq 2,
\end{align}
and $b_0^{\tilde{q}}=b_{\tilde{q}}^{\tilde{q}}=1$ for $0\leq \tilde{q} \leq p$. As shown in point 1., our goal is to move recursively backwards from $x_{Q_1^{(l)}}(t)$ to $x_{Q_l^{(l)}}(t)=x_k(t)$ ($k\in\{1,\ldots ,d\}$), just as the formula in Eq.\,\eqref{app_eq:benth2008_recursive_formula} achieves for the $1$-dimensional AR/CAR case (however from $x_{\tilde{q}}$ to $x_1$ in that specific case). In our multivariate case, when $1\leq l < p$, the discrete variable $x_{Q_i^{(l)}+d}(t)$ satisfies Eq.\,\eqref{app_eq:discretized_formula} for all $1\leq i \leq l$. Notice that the deterministic part of the right hand side in Eq.\,\eqref{app_eq:discretized_formula} is equivalent to the relationship of the 1-dimensional case in Eq.\,\eqref{app_eq:1d_discretized_formula}. By this observation, we see that the formula in Eq.\,\eqref{app_eq:benth2008_recursive_formula} might be modified to hold for the deterministic part of the $d$-dimensional VARMA/MCARMA case. This is the goal of point 3.

    \item The formula in Eq.\,\eqref{app_eq:benth2008_recursive_formula} will be modified according to the deterministic part of the right hand side, $1/h(x_{Q_i^{l}}(t+h) - x_{Q_i^{(l)}}(t))$, of Eq.\,\eqref{app_eq:discretized_formula}. Consider the four arguments that follows.
    \begin{enumerate}
    
        \item As mentioned in point 2., the final goal of the current multivariate case is to reach $x_{Q_l^{(l)}}(t)=x_k(t)$ ($k\in\{1,\ldots ,d\}$), meaning that the right hand side of Eq.\,\eqref{app_eq:benth2008_recursive_formula} should be stated as 
        \begin{align}
            \label{app_eq:deterministic_recursive_procedure_formula_step_1}
            \sum_{n=0}^{\tilde{q}}(-1)^nb_n^{\tilde{q}}x_k(t+\tilde{q}-n).
        \end{align}
        
    \item In the $1$-dimensional case, the backwards recursive procedure moves from variable $x_{\tilde{q}}(t)$ to $x_1(t)$ by inserting Eq.\,\eqref{app_eq:1d_discretized_formula} repeatedly (as shown in point 1. for the current multivariate case). That is, the variable index number is reduced by $\tilde{q}-1$, and the formula requires $\tilde{q}+1$ terms ($n$ running from $0$ to $\tilde{q}$) in order to do that procedure. In the $d$-dimensional case, the backwards recursive procedure moves from variable $x_{Q_{1}^{(l)}}$ to $x_{Q_{l}^{(l)}}$ in the same way as for the $1$-dimensional case, with a variable index number increased by $l-1$ in the current case (notice that the indexes altered in the 1- and d-dimensional recursive procedures are defined in opposite order). That is, the formula in Eq.\,\eqref{app_eq:deterministic_recursive_procedure_formula_step_1} requires $l+1$ terms ($n$ running from $0$ to $l$), and the updated formula becomes 
        \begin{align}         
            \label{app_eq:deterministic_recursive_procedure_formula_step_2}
            \sum_{n=0}^{l}(-1)^nb_n^{l}x_k(t+l-n).
        \end{align}

    \item In the current multivariate case, an Euler scheme step size $h$ is considered rather than step size equal to $1$, as in the 1-dimensional case. As a consequence, for each step in the backwards recursive procedure (see point 1.), the substituted expression adds an extra factor $1/h$ to the formula in Eq.\,\eqref{app_eq:deterministic_recursive_procedure_formula_step_2}. The starting point of the recursive procedure (see Eq.\,\eqref{app_eq:base_equations}) gives the first factor $1/h$, and each of the $l-1$ recursive steps adds one additional factor. That is, the right hand side of Eq.\,\eqref{app_eq:deterministic_recursive_procedure_formula_step_2} has to be modified as 
    \begin{align}
        \label{app_eq:deterministic_recursive_procedure_formula_step_3}
        \frac{1}{h^l}\sum_{n=0}^{l}(-1)^nb_n^{l}x_k(t+l-n).
    \end{align}
    
    \item An additional consequence of setting the Euler scheme step size to $h$ is that, when the Euler scheme has moved $r$ steps forward, the length of each step is equal to $rh$. That is, the following modification of the right hand side of Eq.\,\eqref{app_eq:benth2008_recursive_formula} has to be done
    \begin{align}
        \label{app_eq:deterministic_recursive_procedure_formula_step_4}
        \frac{1}{h^l}\sum_{n=0}^{l}(-1)^nb_n^{l}x_k(t+(l-n)h).
    \end{align}
    \end{enumerate}
    This concludes the deterministic part of the formula we want to prove. 
    
    \item The result of the backwards recursive procedure for the stochastic part of the right hand side of Eq.\,\eqref{app_eq:discretized_formula} has to be added to the formula in Eq.\,\eqref{app_eq:deterministic_recursive_procedure_formula_step_4}. The illustration of the recursive procedure in point 1. shows that, for each recursive step, incremental L{\'e}vy-terms are added. Continuing the explicit calculations in point 1. by one recursive step adds the stochastic term
    \begin{align*}
        -\frac{1}{h^3}\sum_{r=1}^{m}\beta_{(k-1)m+r}^{(p-j+2)}\left(\Delta L_r(t+2h) -2\Delta L_r(t+h) + \Delta L_r(t)\right),
    \end{align*}
    to the recursive procedure of $x_{Q_1^{(l)}+d}(t)$. It is evident that the recursively added incremental L\'{e}vy-terms follow a similar pattern as the recursively added deterministic terms, however with a delay of one step. Also note that each of the added L\'{e}vy-terms remains in the final formula, rather than being updated at each recursive step as for the deterministic part. By these remarks, the formula for the added incremental L\'{e}vy-terms is found through the following arguments:
    
    \begin{enumerate}
        \item As the recursively added incremental L{\'e}vy-terms follow a similar pattern as the recursively added deterministic terms, we use the formula in Eq.\,\eqref{app_eq:deterministic_recursive_procedure_formula_step_4} as a starting point. As seen in point 1., the variable in the stochastic case is $\sum_{r=1}^m\beta_{(k-1)m+r}^{(p-l+i-1)}\Delta L_r (t)$ rather than $x_k(t)$. That is, the first step towards the recursive formula of the stochastic part on the right hand side of Eq.\,\eqref{app_eq:base_equations} is
        \begin{align}
            \label{app_eq:stochastic_recursive_procedure_formula_step_1}
            \frac{1}{h^l}\sum_{r=1}^m\beta_{(k-1)m+r}^{(p-l+i-1)}\sum_{v=0}^{l}(-1)^vb_v^{l}\Delta L_r(t+(l-v)h).
        \end{align}
        \item The incremental L\'{e}vy-terms are not regressed on in the recursive procedure. As seen in point 1., each added L{\'e}vy-term originate from the recursive steps on $1/h(x_{Q_i^{(l)}}(t+h) - x_{Q_i^{(l)}}(t))$ ($1\leq i < l$). This is why each recursively added stochastic term remains in the final expression. That is, each of the $l-1$ added incremental L\'{e}vy-terms during the recursive procedure are given by the recursive formula in Eq.\,\eqref{app_eq:stochastic_recursive_procedure_formula_step_1}, and the stochastic recursive procedure formula has to be modified as 
        \begin{align}
            \label{app_eq:stochastic_recursive_procedure_formula_step_2}
            \sum_{w=0}^{l-1}\frac{1}{h^{l}}\sum_{r=1}^m\beta_{(k-1)m+r}^{(p-l+i-1)}\sum_{v=0}^{w}(-1)^vb_v^{w}\Delta L_r(t+(w-v)h).
        \end{align}
        
        \item In point 1. we see that each of the added incremental L\'{e}vy-terms brings one extra factor $1/h$ for each recursive step. The index $w$ in Eq.\,\eqref{app_eq:stochastic_recursive_procedure_formula_step_2} might account for this by modifying the formula as 
        \begin{align}
            \label{app_eq:stochastic_recursive_procedure_formula_step_3}
            \sum_{w=0}^{l-1}\frac{1}{h^{w+1}}\sum_{r=1}^m\beta_{(k-1)m+r}^{(p-l+i-1)}\sum_{v=0}^{w}(-1)^vb_v^{w}\Delta L_r(t+(w-v)h).
        \end{align}
        
        \item Finally, from the calculations in point 1. we see that the top index of $\beta$ increases from $p-l$ by one for each added term. The index $w$ in Eq.\,\eqref{app_eq:stochastic_recursive_procedure_formula_step_3} might be used to account for this as well, and we modify the formula as
        \begin{align}
            \label{app_eq:stochastic_recursive_procedure_formula_step_4}
            \sum_{w=0}^{l-1}\frac{1}{h^{w+1}}\sum_{r=1}^m\beta_{r + (k-1)m}^{(p-l+w)}\sum_{v=0}^{w}(-1)^vb_v^{w}\Delta L_r(t+(w-v)h).
        \end{align}
    \end{enumerate}
    
Add the deterministic and stochastic recursive formulas in Eq.\,\eqref{app_eq:deterministic_recursive_procedure_formula_step_4} and \eqref{app_eq:stochastic_recursive_procedure_formula_step_4}, respectively, according to the formula in Eq.\,\eqref{app_eq:base_equations} with regards to signs, to find the backwards recursive formula of $x_{Q_1^{(l)}+d}(t)$ as stated in the lemma, and the proof is complete.
\end{enumerate}


\section{Proof Theorem 1}
\label{app:3}


Combine Eq.\,\eqref{eq:elementwise_p_and_q_terms} and the recursive parameter in Eq.\,\eqref{eq:the_recursive_parameter} to rewrite the SDEs in $\mathcal{C}^{AR}$ (Eq.\,\eqref{eq:sde_lag_dependence_structure}) as the piecewise constant process 
\begin{align}
    \label{eq:theorem_proof_eq_1}
    x_{Q_1^{(p)}}(t+h) - x_{Q_1^{(p)}}(t) = -h\sum_{l=1}^{p}\sum_{s=1}^{d}\alpha^{(p-l+1)} x_{Q_1^{(l)}}(t) + \sum_{r=1}^{m}\beta^{(0)}\Delta L_r(t),
\end{align}
using the Euler scheme (see \cite{kloeden92} and \cite{protter97}). Here, we have defined 
\begin{align*}
\alpha^{(p-l+1)}\triangleq \alpha_{(k-1)d+s}^{(p-l+1)} \quad\text{and}\quad \beta^{(\kappa)} \triangleq \beta^{(\kappa)}_{(k-1)m+r}    
\end{align*}
for notational convenience. Note that we sum $Q_1^{(l)}\triangleq (Q_1^{(l)}\mid s)$ over $s$ as well. Extract the terms from the matrix-vector product $[-A_p\cdots -A_1]\bs{X}(t)dt$ corresponding to equation block $1$ (variables $x_{Q_1^{(1)}}=x_k$), such that we can write $Q_1^{(l)}$ as $Q_1^{(l-1)}+d$. That is, rewrite Eq.\,\eqref{eq:theorem_proof_eq_1} as 
\begin{align}
    \label{eq:theorem_proof_eq_2}
    x_{Q_1^{(p-1)}+d}(t+h) - x_{Q_1^{(p-1)}+d}(t) =& -h\sum_{s=1}^{d}\alpha^{(p)}x_{s}(t)+ \sum_{r=1}^{m}\beta^{(0)}\Delta L_r(t)-h\sum_{l=2}^{p}\sum_{s=1}^{d}\alpha^{(p-l+1)} x_{Q_1^{(l-1)}+d}(t).
\end{align}
By Lemma \ref{lemma:euler_approx_base_equations}, the left hand side of Eq.\,\eqref{eq:theorem_proof_eq_2} can be written as
\begin{align*}
    &\frac{1}{h^{p-1}}\sum_{n=0}^{p-1}(-1)^n b_n^{p-1} \left(x_k(t + (p-n)h) - x_k(t + (p-1-n)h)\right) \\
    & - \sum_{w=0}^{p-2}\frac{1}{h^{w+1}}\sum_{r=1}^{m}\beta^{(w+1)}\sum_{v=0}^{w}(-1)^vb_v^w\left(\Delta L_r(t+(w-v+1)h) - \Delta L_r(t+(w-v)h)\right),
\end{align*}
and the right hand side as
\begin{align*}
    &-h\sum_{s=1}^{d}\alpha^{(p)}x_{s}(t)+ \sum_{r=1}^{m}\beta^{(0)}\Delta L_r(t) \\ &-h\sum_{l=2}^{p}\sum_{s=1}^{d}\alpha^{(p-l+1)}\Bigg(\frac{1}{h^{l-1}}\sum_{n=0}^{l-1}(-1)^n b_n^{l-1} x_s(t + (l-1-n)h)  \\ 
    &\qquad\qquad\qquad\qquad\qquad - \sum_{w=0}^{l-2}\frac{1}{h^{w+1}}\sum_{r=1}^{m}\beta^{(p-l+w+1)}\sum_{v=0}^{w}(-1)^vb_v^w\Delta L_r(t+(w-v)h)\Bigg).
\end{align*}
Remember that $k=s$ in the subscript of $\beta^{(p-l+w+1)}$. That is, Eq.\,\eqref{eq:theorem_proof_eq_2} can be rearranged as
\begin{align*}
&\frac{1}{h^{p-1}}\left(x_k(t+ph) - x_k(t+(p-1)h)\right) = -h\sum_{s=1}^{d}\alpha^{(p)}x_{s}(t)+ \sum_{r=1}^{m}\beta^{(0)}\Delta L_r(t) \\
    &-h\sum_{l=2}^{p}\sum_{s=1}^{d}\alpha^{(p-l+1)}\Bigg(\frac{1}{h^{l-1}}\sum_{n=0}^{l-1}(-1)^n b_n^{l-1} x_s(t + (l-1-n)h)  \\ 
    &\qquad\qquad\qquad\qquad\qquad - \sum_{w=0}^{l-2}\frac{1}{h^{w+1}}\sum_{r=1}^{m}\beta^{(p-l+w+1)}\sum_{v=0}^{w}(-1)^vb_v^w\Delta L_r(t+(w-v)h)\Bigg) \\
    &-\frac{1}{h^{p-1}}\sum_{n=1}^{p-1}(-1)^n b_n^{p-1} \left(x_k(t + (p-n)h) - x_k(t + (p-1-n)h)\right) \\
    & + \sum_{w=0}^{p-2}\frac{1}{h^{w+1}}\sum_{r=1}^{m}\beta^{(w+1)}\sum_{v=0}^{w}(-1)^vb_v^w\left(\Delta L_r(t+(w-v+1)h) - \Delta L_r(t+(w-v)h)\right).    
\end{align*}
where the highest lagged variable, meaning $x_k(t+ph)$, is extracted, and $b_0^{p-1}=1$ is inserted (see Eq.\,\eqref{app_eq:benth2008_recursive_formula}). Solve for $x_k(t+ph)$ to complete the proof.


\section{Computations to find MCAR model coefficients of deterministic part\label{app:5}}


Compare Eq.\,\eqref{eq:MCAR/VAR_transformation_example} and \eqref{eq:var_4_process_example} to see that 
\begin{align*}
    \begin{cases}
    -\alpha_{1,1}^{(1)} - 1 = \phi_{11}^{(1)} \\ 
    -\alpha_{1,2}^{(1)} = \phi_{12}^{(1)}
    \end{cases}\quad\Rightarrow\quad
    \begin{cases}
    \alpha_{1,1}^{(1)} = -\phi_{11}^{(1)} - 1 \\
    \alpha_{1,2}^{(1)} = -\phi_{12}^{(1)}
    \end{cases}
\end{align*}
\begin{align*}
    \begin{cases}
    -\alpha_{1,1}^{(2)} + 3\alpha_{1,1}^{(1)} + 4 = \phi_{11}^{(2)} \\
    -\alpha_{1,2}^{(2)} + 3\alpha_{1,2}^{(1)} = \phi_{12}^{(2)}
    \end{cases}\quad\Rightarrow\quad
    \begin{cases}
    \alpha_{1,1}^{(2)} = -3\phi_{11}^{(1)} - \phi_{11}^{(2)} + 1 \\
    \alpha_{1,2}^{(2)} = -3\phi_{12}^{(1)} - \phi_{12}^{(2)}
    \end{cases}
\end{align*}
\begin{align*}
    \begin{cases}
    -\alpha_{1,1}^{(3)} + 2\alpha_{1,1}^{(2)} - 3\alpha_{1,1}^{(1)} - 6 = \phi_{11}^{(3)} \\
    -\alpha_{1,2}^{(3)} + 2\alpha_{1,2}^{(2)} - 3\alpha_{1,2}^{(1)} = \phi_{12}^{(3)} \\
    \end{cases}\quad\Rightarrow\quad
    \begin{cases}
    \alpha_{1,1}^{(3)} = -3\phi_{11}^{(1)}-2\phi_{11}^{(2)} - \phi_{11}^{(3)} - 1 \\
    \alpha_{1,2}^{(3)} = -3\phi_{12}^{(1)}-2\phi_{12}^{(2)} - \phi_{11}^{(3)} \\
    \end{cases}
\end{align*}
\begin{align*}
    \begin{cases}
    -\alpha_{1,1}^{(4)} + \alpha_{1,1}^{(3)} - \alpha_{1,1}^{(2)} + \alpha_{1,1}^{(1)} + 4 = \phi_{11}^{(4)} \\
    -\alpha_{1,2}^{(4)} + \alpha_{1,2}^{(3)} - \alpha_{1,2}^{(2)} + \alpha_{1,2}^{(1)} = \phi_{12}^{(4)}
    \end{cases}\quad\Rightarrow\quad
    \begin{cases}
    \alpha_{1,1}^{(4)} = -\phi_{11}^{(1)}-\phi_{11}^{(2)}-\phi_{11}^{(3)}-\phi_{11}^{(4)}+1 \\
    \alpha_{1,2}^{(4)} = -\phi_{12}^{(1)}-\phi_{12}^{(2)}-\phi_{12}^{(3)}-\phi_{12}^{(4)}.
    \end{cases}
\end{align*}
In the above solutions, change subscripts $1,1$ and $11$ with $2,2$ and $22$, respectively, to find the solutions of $\alpha_{2,2}^{(1)},\ldots ,\alpha_{2,2}^{(4)}$. Further, change subscripts $1,2$ and $12$ with $2,1$ and $21$, respectively, to find the solutions of $\alpha_{2,1}^{(1)},\ldots ,\alpha_{2,1}^{(4)}$. Now, by the definition $\alpha_{k,i_s}^{(j)}\triangleq \alpha_{(k-1)d + i_s}^{(p-l+1)}$, we find the matrices $A_p,\ldots ,A_1$ (see Eq.\,\eqref{eq:matrix_element_definition}).


\section{Computations to find MCAR model coefficients of stochastic part\label{app:4}}


The system of equations in Eq.\,\eqref{eq:cov_of_epsilon} and \eqref{eq:parameters_delta_restrictions} is given by
\begin{align}
    \label{eq:beta_proof_eq0}
    \begin{cases}
    \delta_1^{\mathcal{E}} = \abs{\beta_{1,1}}C_{\delta} + \abs{\beta_{1,2}}C_{\delta}  \\
    \delta_2^{\mathcal{E}} = \abs{\beta_{2,1}}C_{\delta} + \abs{\beta_{2,2}}C_{\delta}
    \end{cases}\quad\text{and}\quad \begin{bmatrix}
\Sigma_{11}  & \Sigma_{12} \\
 \Sigma_{21} & \Sigma_{22}
    \end{bmatrix} =\begin{bmatrix}
\beta_{1,1}^2 + \beta_{1,2}^2  & \beta_{1,1}\beta_{2,1} + \beta_{1,2}\beta_{2,2} \\
 & \beta_{2,1}^2 + \beta_{2,2}^2 
    \end{bmatrix}.
\end{align}
Solve the expressions for $\delta_1^{\mathcal{E}}$, $\delta_2^{\mathcal{E}}$, $\Sigma_{11}$ and $\Sigma_{22}$ in Eq.\,\eqref{eq:beta_proof_eq0} with respect to $\beta_{1,1}$ and $\beta_{2,2}$, and equate them to find 
\begin{align}
    \label{eq:beta_proof_eq1}
    \begin{cases}
    \pm(\delta_1^{\mathcal{E}}/C_\delta - \abs{\beta_{1,2}}) = \pm\sqrt{\Sigma_{11} - \beta_{1,2}^2} \\
    \pm(\delta_2^{\mathcal{E}}/C_\delta - \abs{\beta_{2,1}}) = \pm\sqrt{\Sigma_{22} - \beta_{2,1}^2}.
    \end{cases}
\end{align}
Solve the system in Eq.\,\eqref{eq:beta_proof_eq1} with respect to $\beta_{1,2}$ and $\beta_{2,1}$. Both equations are solved in a similar way, and we therefore continue the calculations with 
\begin{align*}
\{\beta,\sigma,\delta,C\}\in\{(\beta_{1,2},\beta_{2,1}),(\Sigma_{11},\Sigma_{22}),(\delta_1^{\mathcal{E}},\delta_2^{\mathcal{E}}), (C_\delta,C_\delta)\}.    
\end{align*}
That is,
\begin{align}
    \label{eq:beta_proof_eq2}
    \begin{split}
    (\delta/C - \abs{\beta})^2 &= \sigma - \beta^2\\
    \rightarrow\quad  \beta^2 - \abs{\beta}\delta/C +\frac{1}{2}(\delta^2/C^2 - \sigma) &= 0.
    \end{split}
\end{align}
For $\beta$ positive and $\beta$ negative, Eq.\,\eqref{eq:beta_proof_eq2} gives the following respective solutions by the quadratic equation
\begin{align*}
    \beta = \frac{1}{2C}\left(\delta \pm \sqrt{2\sigma C^2 - \delta^2}\right)\quad\text{and}\quad \beta = -\frac{1}{2C}\left(\delta \mp \sqrt{2\sigma C^2 - \delta^2}\right).
\end{align*}
Finally, combine the expression of $\Sigma_{12}$ in Eq.\,\eqref{eq:beta_proof_eq0} with the right hand side of Eq.\,\eqref{eq:beta_proof_eq1} to find
\begin{align*}
    \pm\sqrt{\Sigma_{11} - \beta_{1,2}^2}\beta_{2,1}  \pm\sqrt{\Sigma_{22} - \beta_{2,1}^2}\beta_{1,2} = \Sigma_{12}.
\end{align*}


\end{appendices}

\vspace{5mm}
Supporting information. Additional information for this article is available online. Raw data used in this work are available from link in Appendix\,\ref{app:6}. 

\begin{appendices}
\renewcommand{\thesection}{S1}
\section{Data availability\label{app:6}}
https://apps.ecmwf.int/datasets/
\end{appendices}

\begin{flushleft}
\textbf{References}\\
\end{flushleft}
\bibliography{arixiv_final_version}

\end{document}